\DeclareMathOperator*{\Times}{\scalerel*{\times}{\textstyle\sum}}
\DeclareMathOperator*{\Plus}{\scalerel*{+}{\textstyle\sum}}
\DeclareMathOperator{\Aut}{Aut}
\DeclareMathOperator{\ch}{ch}
\DeclareMathOperator{\End}{End}
\DeclareMathOperator{\Fix}{Fix}
\DeclareMathOperator{\Hom}{Hom}
\DeclareMathOperator{\Id}{Id}
\DeclareMathOperator{\im}{Im}
\DeclareMathOperator{\Ind}{Ind}
\DeclareMathOperator{\Irr}{Irr}
\DeclareMathOperator{\ord}{ord}
\DeclareMathOperator{\SpecR}{Spec_R}
\DeclareMathOperator{\Spec}{Spec}
\DeclareMathOperator{\Tr}{Tr}
\newcommand*{\ab}[1]{{#1}^{\mathrm{ab}}}
\newcommand*{\C}{\mathbb{C}}
\newcommand*{\Class}{\mathcal{C}}
\newcommand*{\Cunits}{\units{\C}}
\newcommand*{\dual}[1]{\widehat{#1}}
\newcommand*{\eg}{e.g.\ }
\newcommand*{\grpgen}[1]{\left\langle{#1}\right\rangle}
\newcommand*{\ie}{i.e.\ }
\newcommand*{\inv}[1]{{#1}^{-1}}
\newcommand*{\middlebar}{\ \middle | \ }
\newcommand*{\N}{\mathbb{N}}
\newcommand*{\Rinf}{R_{\infty}}
\newcommand*{\Reid}{\mathcal{R}}
\newcommand*{\size}[1]{\left| #1 \right|}
\newcommand*{\unitsb}[1]{\units{\left(#1\right)}}
\newcommand*{\units}[1]{{#1}^{\times}}
\newcommand*{\ZmodZ}[1]{\Z / #1 \Z}
\newcommand*{\ZnZ}{\ZmodZ{n}}
\newcommand*{\Z}{\mathbb{Z}}
\renewcommand{\phi}{\varphi}
\let\originalleft\left
\let\originalright\right
\renewcommand{\left}{\mathopen{}\mathclose\bgroup\originalleft}
\renewcommand{\right}{\aftergroup\egroup\originalright}
\declaretheorem[style=definition, name = Definition, numberwithin=subsection]{defin}
\declaretheorem[name = Theorem, sibling=defin]{theorem}
\declaretheorem[name = Lemma, sibling=defin]{lemma}
\declaretheorem[name = Proposition, sibling=defin]{prop}
\declaretheorem[name = Corollary, sibling=defin]{cor}
\declaretheorem[style=remark, name = Remark, numbered = no]{remark}
\declaretheorem[name = Theorem, numbered = no]{theorem*}
\numberwithin{equation}{section}
\crefname{prop}{Proposition}{Propositions}
\crefname{cor}{Corollary}{Corollaries}
\title{Reidemeister spectrum of split metacyclic groups}
\author{Pieter Senden\footnote{Researcher funded by FWO PhD-fellowship fundamental research (file number: 1112522N).}}
\begin{document}
\begin{center}
	\LARGE{The Reidemeister spectrum of split metacyclic groups}\\[.5em]
	\Large{Pieter Senden\footnote{Researcher funded by FWO PhD-fellowship fundamental research (file number: 1112522N).}}
\end{center}
\begin{abstract}
	Given a group $G$ and an automorphism $\varphi$ of $G$, two elements $x, y \in G$ are said to be $\varphi$-conjugate if $x = gy \varphi(g)^{-1}$ for some $g \in G$. The number of equivalence classes for this relation is the Reidemeister number $R(\varphi)$ of $\varphi$. The set $\{R(\psi) \mid \psi \in \mathrm{Aut}(G)\}$ is called the Reidemeister spectrum of $G$. We fully determine the Reidemeister spectrum of split metacyclic groups of the form $C_{n} \rtimes C_{p}$ where $p$ is a prime and the action is non-trivial.
\end{abstract}
\let\thefootnote\relax\footnote{2020 {\em Mathematics Subject Classification.} Primary: 20D45, 20E45; Secondary: 20E22.}
\let\thefootnote\relax\footnote{{\em Keywords and phrases.} Split metacyclic groups, twisted conjugacy, Reidemeister number, Reidemeister spectrum}
\section{Introduction}
Let \(G\) be a group and \(\phi: G \to G\) be an automorphism. For \(x, y \in G\), we say that \(x\) and \(y\) are \emph{\(\phi\)-conjugate} if there exists a \(g \in G\) such that \(x = g y \inv{\phi(g)}\).	
	We define \(\Reid[\phi]\) to be the set of all \(\phi\)-equivalence classes and the \emph{{Reidemeister} number \(R(\phi)\) of \(\phi\)} as the cardinality of \(\Reid[\phi]\). Note that \(R(\phi) \in \N_0 \cup \{\infty\}\). Finally, we define the \emph{Reidemeister spectrum} to be
	\(
		\Spec_R(G) := \{ R(\phi) \mid \phi \in \Aut(G)\}.
	\)
	If \(\SpecR(G) = \{\infty\}\), we say that \(G\) has the \(\Rinf\)-property.

	There is also a topological Reidemeister number, which is used in Nielsen fixed-point theory to provide a bound on the number of fixed-point classes of a continuous self-map. Both numbers are strongly related, see \cite{Jiang83}. Other applications of twisted conjugacy appear in isogredience classes (see \eg \cite{FelshtynTroitsky15}) and representation theory (see \eg \cite{FelshtynLuchnikovTroitsky15,Springer06}).
	
	For many groups either partial or full information is known about their Reidemeister spectrum. It has either been proven that they have the \(\Rinf\)-property (\eg Baumslag-Solitar groups \cite{FelshtynGoncalves06}, Thompson's group \cite{BleakFelshtynGoncalves08}), or that they do not (\eg certain groups of exponential growth \cite{GoncalvesWong03}, free groups of infinite rank \cite{DekimpeGoncalves14}), and for some even the complete Reidemeister spectrum has been determined (\eg low-dimensional crystallographic groups \cite{DekimpeKaiserTertooy19}).
	
	In this article, we will only work with finite groups. Hence, the considered Reidemeister spectra will be finite sets of finite numbers and no such group can have the \(\Rinf\)-property. As far as the author knows, there is very little literature with results specifically concerning the Reidemeister spectrum of finite groups. One of the few results of such kind is due to A.\ Fel'shtyn, who proved that the Reidemeister number of an endomorphism \(\phi\) of a finite group equals the number of conjugacy classes that are mapped to itself by \(\phi\) (\cite[Theorem~14]{Felshtyn00}, cf.\ \cref{prop:ReidemeisterNumberEqualsNumberOfFixedConjugacyClasses}).

	The aim of this article is to determine explicit expressions for \(\SpecR(C_{n} \rtimes C_{p})\). Here, \(n, p\) are positive integers with \(p\) prime, \(C_{n}\) denotes the cyclic group of order \(n\) and the action of \(C_{p}\) on \(C_{n}\) is non-trivial; this family of groups contains among others all finite dihedral groups. We do this by counting the number of irreducible characters of the group that are fixed when composed with a given automorphism \(\phi\). This number of fixed irreducible characters turns out to be equal to \(R(\phi)\).
	
	This article consists of three parts. In the first part, we provide the necessary background and results regarding Reidemeister numbers and character theory. In a short second part, we recall the possibilities for the action of \(C_{p}\) on \(C_{n}\) and make some simplifications to reduce the number of semi-direct products we have to consider. The third part, finally, is devoted to determining the Reidemeister spectra of all remaining semi-direct products.
\section{Preliminaries}
\subsection{Reidemeister numbers}
In this section we recall the necessary tools for determining the Reidemeister spectrum and we also compute the Reidemeister spectrum of finite cyclic groups.
\begin{prop}[{See \eg \cite[Proposition~2.4]{Senden21}}]	\label{prop:ReidemeisterNumberDirectProductAutomorphismGroups}
	Let \(G_{1}, \ldots, G_{n}\) be groups. Consider an element \(\phi = (\phi_{1}, \ldots, \phi_{n})\) of \(\Aut(G_{1}) \times \ldots \times \Aut(G_{n}) \leq \Aut(G_{1} \times \ldots \times G_{n})\). Then \(R(\phi) = \prod_{i = 1}^{n} R(\phi_{i})\).
\end{prop}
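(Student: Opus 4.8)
The plan is to show that the twisted conjugacy relation on the direct product decomposes componentwise, yielding a bijection between \(\Reid[\phi]\) and the product \(\prod_{i = 1}^{n} \Reid[\phi_{i}]\); taking cardinalities then gives the result. First I would unwind the definition. Write \(x = (x_{1}, \ldots, x_{n})\), \(y = (y_{1}, \ldots, y_{n})\) and \(g = (g_{1}, \ldots, g_{n})\). Since multiplication in \(G_{1} \times \ldots \times G_{n}\) is componentwise and \(\phi(g) = (\phi_{1}(g_{1}), \ldots, \phi_{n}(g_{n}))\) by definition of the product automorphism, the single equation \(x = g y \inv{\phi(g)}\) is equivalent to the system of \(n\) equations \(x_{i} = g_{i} y_{i} \inv{\phi_{i}(g_{i})}\), one for each coordinate, with the \(g_{i}\) ranging independently over \(G_{i}\). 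Hence \(x\) and \(y\) are \(\phi\)-conjugate if and only if \(x_{i}\) and \(y_{i}\) are \(\phi_{i}\)-conjugate for every \(i\).

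Next I would promote this equivalence of relations to a statement about classes. The observation above shows that the \(\phi\)-conjugacy class of \(x\) is exactly the product \(\prod_{i = 1}^{n} [x_{i}]\) of the individual \(\phi_{i}\)-conjugacy classes \([x_{i}]\). Consequently the map sending the class of \((x_{1}, \ldots, x_{n})\) to the tuple \(([x_{1}], \ldots, [x_{n}])\) is a well-defined bijection \(\Reid[\phi] \to \prod_{i = 1}^{n} \Reid[\phi_{i}]\): it is well-defined and injective because two tuples lie in the same class precisely when they agree coordinatewise up to \(\phi_{i}\)-conjugacy, and surjective because any choice of representatives \(x_{i}\) assembles into an element of the product group. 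Taking cardinalities yields \(R(\phi) = \prod_{i = 1}^{n} R(\phi_{i})\).

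There is no serious obstacle here; the computation is essentially bookkeeping forced by the componentwise group operation. The only point requiring a little care is the treatment of cardinalities when some factor \(R(\phi_{i})\) is infinite, in which case the identity should be read as an equality of (possibly infinite) cardinal numbers, consistent with the convention \(R(\phi) \in \N_{0} \cup \{\infty\}\). For the finite groups that concern us in the remainder of the article this subtlety does not arise.
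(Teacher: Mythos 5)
Your proof is correct: the componentwise decomposition of the twisted conjugacy relation, the resulting identification of \([x]_{\phi}\) with \(\prod_{i=1}^{n} [x_{i}]_{\phi_{i}}\), and the induced bijection \(\Reid[\phi] \to \prod_{i=1}^{n} \Reid[\phi_{i}]\) is exactly the standard argument, and your remark on infinite cardinalities handles the only delicate point. Note that the paper itself does not prove this proposition but cites it from an external reference, so your write-up simply supplies the omitted (and essentially canonical) proof.
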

	The following is well-known.
\begin{prop}	\label{prop:automorphismGroupDirectProduct}
	Let \(G = G_{1} \times \ldots \times G_{n}\) be a product of finite groups such that \(\gcd(\size{G_{i}}, \size{G_{j}}) = 1\) for \(i \ne j\). Then
	\[
		\Aut(G) \cong \Times_{i = 1}^{n} \Aut(G_{i}).
	\]
\end{prop}
\begin{defin}
	Let \(A_{1}, \ldots, A_{n}\) be sets of natural numbers. We define
	\[
		A_{1} \cdot \ldots \cdot A_{n} := \prod_{i = 1}^{n} A_{i} := \{a_{1} \ldots a_{n} \mid \forall i \in \{1, \ldots, n\}: a_{i} \in A_{i}\}.
	\]
	and
	\[
		A_{1} + \ldots + A_{n} := \Plus_{i = 1}^{n} A_{i} := \{a_{1} + \ldots + a_{n} \mid \forall i \in \{1, \ldots, n\}: a_{i} \in A_{i}\}
	\]
\end{defin}

\begin{cor}	\label{cor:SpecDirectProductCoprime}
	Let \(G = G_{1} \times \ldots \times G_{n}\) be a product of finite groups such that \(\gcd(\size{G_{i}}, \size{G_{j}}) = 1\) for \(i \ne j\). Then
	\[
		\SpecR(G) = \prod_{i = 1}^{n} \SpecR(G_{i}).
	\]
\end{cor}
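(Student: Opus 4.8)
The plan is to combine the two preceding results in the obvious way. By \cref{lem:automorphismGroupDirectProduct}, the coprimality hypothesis forces each factor \(G_{i}\) to be characteristic in \(G\), so every automorphism of \(G\) restricts to an automorphism of each \(G_{i}\), and the induced map \(\Aut(G) \to \Times_{i=1}^{n} \Aut(G_{i})\) is an isomorphism. Concretely, this means that every \(\phi \in \Aut(G)\) is of the form \(\phi = (\phi_{1}, \ldots, \phi_{n})\) with \(\phi_{i} \in \Aut(G_{i})\), and conversely every such tuple arises from an automorphism of \(G\). This identification is exactly the hypothesis needed to invoke \cref{prop:ReidemeisterNumberDirectProductAutomorphismGroups}.

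Granting this, I would argue by double inclusion on the two sets of natural numbers. For the inclusion \(\SpecR(G) \subseteq \prod_{i=1}^{n} \SpecR(G_{i})\), take an arbitrary \(r \in \SpecR(G)\), so \(r = R(\phi)\) for some \(\phi \in \Aut(G)\). Writing \(\phi = (\phi_{1}, \ldots, \phi_{n})\) via the isomorphism above and applying \cref{prop:ReidemeisterNumberDirectProductAutomorphismGroups} gives \(r = R(\phi) = \prod_{i=1}^{n} R(\phi_{i})\); since each \(R(\phi_{i}) \in \SpecR(G_{i})\) by definition, \(r\) lies in the product set. For the reverse inclusion, take \(r = \prod_{i=1}^{n} r_{i}\) with each \(r_{i} \in \SpecR(G_{i})\); choose \(\phi_{i} \in \Aut(G_{i})\) realising \(r_{i} = R(\phi_{i})\), assemble \(\phi = (\phi_{1}, \ldots, \phi_{n})\), which is a genuine automorphism of \(G\) again by \cref{lem:automorphismGroupDirectProduct}, and conclude \(R(\phi) = \prod_{i=1}^{n} r_{i} = r\) by \cref{prop:ReidemeisterNumberDirectProductAutomorphismGroups}.

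Honestly, there is no real obstacle here: the corollary is a formal consequence of the two cited results, and the only point requiring any care is making sure the set-product notation from the preceding definition is being matched correctly against the direct-product decomposition of automorphisms—that is, confirming that \emph{every} automorphism decomposes as a tuple (the surjectivity direction of \cref{lem:automorphismGroupDirectProduct}) so that no element of \(\SpecR(G)\) is missed, and that \emph{every} tuple assembles into an automorphism so that no element of the product set is spurious. Both directions are supplied by the isomorphism \(\Aut(G) \cong \Times_{i=1}^{n} \Aut(G_{i})\), so the proof is essentially a one-line application once the identification is spelled out.
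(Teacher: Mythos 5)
Your proof is correct and is exactly the argument the paper intends: the corollary is stated without an explicit proof precisely because it follows by combining \cref{lem:automorphismGroupDirectProduct} (every automorphism of \(G\) is a tuple \((\phi_{1}, \ldots, \phi_{n})\), and every tuple is an automorphism) with \cref{prop:ReidemeisterNumberDirectProductAutomorphismGroups}, via the double inclusion you spell out. Nothing is missing.
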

\begin{lemma}[{See \eg \cite[Corollary~2.5]{FelshtynTroitsky07}}]	\label{lem:ReidemeisterNumberCompositionInnerAutomorphism}
	Let \(G\) be a group, \(\phi \in \Aut(G)\) and \(g \in G\). Denote by \(\tau_{g}\) the inner automorphism associated to \(g\). Then \(R(\tau_{g} \circ \phi) = R(\phi)\).
\end{lemma}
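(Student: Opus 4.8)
The plan is to exhibit an explicit bijection between the set of \((\tau_{g} \circ \phi)\)-conjugacy classes and the set of \(\phi\)-conjugacy classes; since these are finite (indeed the whole paper works with finite groups), equality of cardinalities then follows at once. Write \(\psi := \tau_{g} \circ \phi\), and recall that with the convention \(\tau_{g}(x) = gx\inv{g}\) we have \(\psi(x) = g\phi(x)\inv{g}\) for all \(x \in G\), so that \(\inv{\psi(h)} = g\,\inv{\phi(h)}\,\inv{g}\) for every \(h \in G\).

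First I would unwind the defining relation. Two elements \(x, y \in G\) satisfy \(x \Rconj{\psi} y\) precisely when there is some \(h \in G\) with \(x = h y \inv{\psi(h)} = h y g \inv{\phi(h)} \inv{g}\). Multiplying this identity on the right by \(g\) yields \(xg = h\,(yg)\,\inv{\phi(h)}\), which is exactly the assertion that \(xg \Rconj{\phi} yg\), witnessed by the very same \(h\). This computation is what suggests the map to use: the right-translation \(f : G \to G\), \(f(x) = xg\), which is plainly a bijection of the underlying set.

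I would then record that \(x \Rconj{\psi} y \iff f(x) \Rconj{\phi} f(y)\): the forward implication is the calculation just performed, and the converse follows by running the same algebra backwards (given \(xg \Rconj{\phi} yg\), multiply on the right by \(\inv{g}\) and reintroduce \(\tau_{g}\)). Consequently \(f\) carries each \(\psi\)-class onto a single \(\phi\)-class and descends to a well-defined bijection \(\Reid[\psi] \to \Reid[\phi]\), whence \(R(\tau_{g} \circ \phi) = R(\psi) = R(\phi)\).

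I do not anticipate a genuine obstacle: the entire content lies in choosing the correct translation—namely \(xg\) on the right, rather than \(gx\) or \(\inv{g}x\)—so that the extra factor \(g\) introduced by \(\tau_{g}\) is absorbed, and then verifying that this set-bijection is compatible with both twisted-conjugacy relations. The only point requiring care is the bookkeeping of inverses and of the side on which \(g\) appears, which is dictated by the convention \(\tau_{g}(x) = gx\inv{g}\); under the opposite convention one would translate by \(\inv{g}\) instead. (For finite \(G\) one could alternatively combine \cref{prop:ReidemeisterNumberEqualsNumberOfFixedConjugacyClasses} with the observation that \(\tau_{g}\) fixes every conjugacy class setwise, so that \(\tau_{g} \circ \phi\) and \(\phi\) fix the same conjugacy classes; but the translation argument is preferable as it requires no finiteness hypothesis.)
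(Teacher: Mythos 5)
Your proof is correct. Note, however, that the paper does not prove this lemma at all: it is quoted from the literature (Fel'shtyn--Troitsky, Corollary~2.5), so there is no in-paper argument to compare against. Your right-translation bijection \(f(x) = xg\) is precisely the standard proof of that cited result: the computation \(x = h y g \inv{\phi(h)}\inv{g} \iff xg = h(yg)\inv{\phi(h)}\) shows that \(f\) maps each \((\tau_g \circ \phi)\)-class exactly onto a \(\phi\)-class, giving a bijection \(\Reid[\tau_g \circ \phi] \to \Reid[\phi]\). One small remark: your opening appeal to finiteness is superfluous (and slightly at odds with your own closing parenthetical) --- a bijection between the two sets of twisted conjugacy classes yields equality of cardinalities whether or not \(G\) is finite, which is exactly why this lemma holds for arbitrary groups as stated. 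The alternative you sketch for finite \(G\) (via \cref{prop:ReidemeisterNumberEqualsNumberOfFixedConjugacyClasses}, since \(\tau_g\) fixes every conjugacy class setwise and hence \([\tau_g(\phi(x))] = [\phi(x)]\) for all \(x\)) is also valid and fits the paper's character-counting viewpoint, but your judgement that the translation argument is preferable is sound.
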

\begin{prop}[{See \eg \cite[Theorem 14]{Felshtyn00}}]	\label{prop:ReidemeisterNumberEqualsNumberOfFixedConjugacyClasses}
	Let \(G\) be a finite group and \(\phi \in \End(G)\) an endomorphism. Denote by \(\Class := \{[x] \mid x \in G\}\) the set of conjugacy classes of \(G\). Then
	\[
		R(\phi) = \size{\{[x] \in \Class \mid [\phi(x)] = [x]\}}.
	\]
	In particular, if \(G\) is finite abelian, then \(R(\phi) = \size{\Fix(\phi)}\).
\end{prop}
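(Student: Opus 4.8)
The plan is to realize \(R(\phi)\) as the number of orbits of a group action and then invoke the orbit-counting theorem (Burnside's lemma). To that end, I would let \(G\) act on itself by \emph{twisted conjugation}, \(g \cdot y := g y \inv{\phi(g)}\). Using that \(\phi\) is an endomorphism, so \(\phi(hg) = \phi(h)\phi(g)\), a short computation gives \(h \cdot (g \cdot y) = (hg) y \inv{\phi(hg)} = (hg) \cdot y\) and clearly \(1 \cdot y = y\); hence this is a genuine left action. By the definition of \(\phi\)-conjugacy, two elements lie in the same orbit exactly when they are \(\phi\)-conjugate, so the orbits are precisely the Reidemeister classes and \(R(\phi)\) equals their number.

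Next I would apply Burnside's lemma, valid since \(G\) is finite, to write
\[
	R(\phi) = \frac{1}{\size{G}} \sum_{g \in G} \size{F_{g}}, \qquad F_{g} := \{y \in G \mid g y \inv{\phi(g)} = y\}.
\]
The defining relation rearranges to \(\inv{y} g y = \phi(g)\), so \(F_{g}\) is empty unless \(\phi(g)\) is conjugate to \(g\); and if \(\inv{y_{0}} g y_{0} = \phi(g)\) for some \(y_{0}\), then \(y \in F_{g}\) if and only if \(y\inv{y_{0}} \in C_{G}(g)\). Thus \(F_{g}\) is either empty or a coset of the centralizer \(C_{G}(g)\), whence \(\size{F_{g}} = \size{C_{G}(g)}\) in the latter case.

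Finally I would regroup the sum by conjugacy classes. For \(g' = \inv{h}gh\) one has \(\phi(g') = \inv{\phi(h)}\phi(g)\phi(h) \sim \phi(g)\), so the condition ``\(\phi(g) \sim g\)'' depends only on \([g]\) and is equivalent to \([\phi(g)] = [g]\); moreover \(\size{C_{G}(g)} = \size{G}/\size{[g]}\) is constant on \([g]\) as well. Summing the contributions \(\size{[x]} \cdot \size{G}/\size{[x]} = \size{G}\) over the \(\phi\)-fixed classes and dividing by \(\size{G}\) collapses the formula to \(R(\phi) = \size{\{[x] \in \Class \mid [\phi(x)] = [x]\}}\). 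The abelian case follows at once, since then every class is a singleton and \([\phi(x)] = [x]\) means \(\phi(x) = x\). The only place needing genuine care is the fixed-set count \(\size{F_{g}} = \size{C_{G}(g)}\) together with the observation that both this number and the fixed-class condition are class functions; this is exactly what makes the final collapse work, while the remaining steps are routine.
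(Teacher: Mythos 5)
Your proposal is correct and complete, but note that the paper itself contains no proof of this proposition at all: it is stated with the citation to Fel'shtyn's Theorem~14 and used as a black box (the paper only later reinterprets it character-theoretically, via the trace of the permutation action on class functions). So the comparison here is between your argument and the cited literature rather than an internal proof. Your route --- realizing the Reidemeister classes as orbits of the twisted conjugation action \(g \cdot y = g y \inv{\phi(g)}\), applying Burnside's lemma, identifying each nonempty fixed set \(F_{g}\) as a coset of \(C_{G}(g)\) via the rearrangement \(\inv{y} g y = \phi(g)\), and then collapsing the sum over each \(\phi\)-stable class to \(\size{[x]} \cdot \size{G}/\size{[x]} = \size{G}\) --- is the standard proof of this fact, and every step is justified: the action axioms need only that \(\phi\) is a homomorphism (so the statement for endomorphisms, not just automorphisms, is covered), and the observation that both \(\size{C_{G}(g)}\) and the condition \([\phi(g)] = [g]\) are constant on conjugacy classes is exactly the point that makes the regrouping legitimate. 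The abelian specialization is also handled correctly, since singleton classes turn the fixed-class condition into \(\phi(x) = x\). What your proof buys, relative to the paper, is self-containedness: the paper's reliance on the citation is replaced by an elementary counting argument using nothing beyond Burnside's lemma.
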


\begin{lemma}	\label{lem:FixedPointsCyclicGroup}
	Let \(n \geq 2\) and let \(\phi \in \End(C_{n})\) be given by \(\phi(x) = x^{\gamma}\), where \(x\) is a generator and \(\gamma \in \Z\). Put \(d = \gcd(\gamma - 1, n)\). Then \(\Fix(\phi) = \grpgen{x^{\frac{n}{d}}}\).
	
	In particular, \(R(\phi) = \gcd(\gamma - 1, n)\).
\end{lemma}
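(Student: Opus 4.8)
The plan is to compute $\Fix(\phi)$ explicitly and then read off $R(\phi)$ from \cref{prop:ReidemeisterNumberEqualsNumberOfFixedConjugacyClasses}. Since every element of $C_{n}$ has the form $x^{k}$ for some $k \in \Z$, I would first observe that $x^{k}$ is fixed by $\phi$ exactly when $x^{k\gamma} = x^{k}$, that is, when $x^{k(\gamma - 1)} = 1$. As $x$ has order $n$, this is equivalent to the divisibility condition $n \mid k(\gamma - 1)$, so $\Fix(\phi) = \{x^{k} \mid n \mid k(\gamma - 1)\}$ and the problem reduces to describing the exponents $k$ satisfying this congruence.

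The key step is then a short number-theoretic reduction. Writing $d = \gcd(\gamma - 1, n)$ and setting $\gamma - 1 = da$, $n = db$ with $\gcd(a, b) = 1$, the condition $n \mid k(\gamma - 1)$ becomes $db \mid kda$, \ie $b \mid ka$; since $\gcd(a, b) = 1$, this holds precisely when $b \mid k$. As $b = n/d$, the fixed exponents are exactly the multiples of $n/d$, whence $\Fix(\phi) = \grpgen{x^{n/d}}$.

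For the final assertion, I would compute $\size{\Fix(\phi)}$: the element $x^{n/d}$ has order $n / \gcd(n, n/d) = d$, so $\size{\grpgen{x^{n/d}}} = d$. Because $C_{n}$ is finite abelian, \cref{prop:ReidemeisterNumberEqualsNumberOfFixedConjugacyClasses} yields $R(\phi) = \size{\Fix(\phi)} = d = \gcd(\gamma - 1, n)$. This lemma is elementary, so there is no substantial obstacle; the only point deserving care is the coprimality implication $b \mid ka \Rightarrow b \mid k$, which is exactly the mechanism by which the greatest common divisor enters the answer.
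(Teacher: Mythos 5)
Your proof is correct and follows essentially the same route as the paper's: reduce to the congruence \(n \mid k(\gamma - 1)\), identify the fixed exponents as exactly the multiples of \(n/d\), and conclude \(R(\phi) = \size{\Fix(\phi)} = d\) from \cref{prop:ReidemeisterNumberEqualsNumberOfFixedConjugacyClasses} since \(C_{n}\) is abelian. If anything, your divisibility step (writing \(\gamma - 1 = da\), \(n = db\) with \(\gcd(a,b) = 1\) and using \(b \mid ka \Rightarrow b \mid k\)) is tidier than the paper's phrasing, which asserts that \((\gamma - 1)/d\) is invertible modulo \(n\) — a claim that is only accurate modulo \(n/d\) — although both arguments reach the same conclusion.
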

\begin{proof}
	Note that \(\gamma\) is only defined modulo \(n\), hence we have to check that \(\gcd(\gamma - 1, n) = \gcd(\gamma' - 1, n)\) if \(\gamma \equiv \gamma' \bmod n\). However, writing \(\gamma' = \gamma + \alpha n\) for some \(\alpha \in \Z\), we see that \(\gcd(\gamma' - 1, n) = \gcd(\gamma + \alpha n - 1, n) = \gcd(\gamma - 1, n)\).

	We now determine the fixed points of \(\phi\). We have that \(\phi(x^{i}) = x^{i}\) if and only if \(i \cdot (\gamma - 1) \equiv 0 \bmod n\). Writing \(d = \gcd(\gamma - 1, n)\), we see that \(\frac{\gamma - 1}{d}\) is invertible modulo \(n\), hence \(i \cdot (\gamma - 1) \equiv 0 \bmod n\) if and only if \(i \cdot d \equiv 0 \bmod n\). Thus, for \(i \cdot d \equiv 0 \bmod n\) to hold, \(i\) must be a multiple of \(\frac{n}{d}\). Hence, \(\Fix(\phi) = \grpgen{x^{\frac{n}{d}}}\).
	
	As we work in an abelian group, \(R(\phi) = \size{\Fix(\phi)}\), therefore, \(R(\phi) = d = \gcd(\gamma - 1, n)\).
\end{proof}
\begin{prop}	\label{prop:SpecRCyclicGroups}
	Let \(n \geq 2\). Then
	\[
		\SpecR(C_{n}) = \begin{cases}
			\{d \mid d \text{ divides } n, d \geq 1\}	&	\mbox{if \(n\) is odd,}	\\
			\{d \mid d \text{ divides } n, d \geq 1, d \equiv 0 \bmod 2\}	&	\mbox{if \(n\) is even.}
		\end{cases}
	\]
\end{prop}
\begin{proof}
	Note that \(\Aut(C_{n}) \cong \unitsb{\ZnZ}\). Let \(\phi: C_{n} \to C_{n}: x \mapsto x^{\gamma}\), where \(\gamma \in \Z\), be an automorphism. Then \(\gcd(\gamma, n) = 1\). As \(R(\phi) = \gcd(\gamma - 1, n)\), we see that this number is even if \(n\) is even, since then \(\gamma\) must be odd. This proves the \(\subseteq\)-inclusion.
	
	For the converse, suppose that \(d\) is a divisor of \(n\) satisfying the necessary conditions. We have to find a \(\gamma \in \Z\) coprime with \(n\) such that \(\gcd(\gamma - 1, n) = d\). Let \(n = \prod_{i = 1}^{r} p_{i}^{e_{i}}\) be the prime factorisation of \(n\) with all \(e_{i} \geq 1\) and all \(p_{i}\) distinct primes. Write \(d = \prod_{i = 1}^{r} p_{i}^{f_{i}}\), where \(0 \leq e_{i} \leq f_{i}\), and consider the system of congruences
	\[
		x \equiv 1 + p_{i}^{f_{i}} \bmod p_{i}^{e_{i}}
	\]
	for all \(i\) with \(f_{i} \geq 1\). Let \(q\) be the product of all \(p_{i}^{e_{i}}\)'s for which \(f_{i} = 0\) and add to the system the congruence
	\[
		x \equiv -1 \bmod q.
	\]
	Note that \(q\) is odd, as \(d\) is even if \(n\) is even. The Chinese Remainder Theorem yields a solution \(\gamma \in \Z\) to this system of congruence, and \(\gamma\) is uniquely determined modulo \(n\), as the product of the moduli is precisely \(n\).
	
    	It is clear that \(\gcd(\gamma - 1, p_{i}^{e_{i}}) = p_{i}^{f_{i}}\) for all \(i\) with \(f_{i} \geq 1\). If \(f_{i} = 0\), then \(p_{i}\) is odd as \(2\) divides \(d\) if \(n\) is even. Furthermore, \(p_{i}\) divides \(\gamma + 1\) by the last congruence, implying that \(p_{i}\) does not divide \(\gamma - 1\) as \(p_{i}\) is odd. This proves that \(\gcd(\gamma - 1, n) = d\).
	
	Finally, if \(p_{i}\) divides \(\gamma\), then either \(0 \equiv 1 \bmod p_{i}\) or \(0 \equiv -1 \bmod p_{i}\), both of which are impossible. Hence, \(\gcd(\gamma, n) = 1\).
\end{proof}
	The following can easily be proven by using a generator of the cyclic group.
\begin{lemma}	\label{lem:intersectionSubgroupsCyclicGroup}
	Let \(C\) be a finite cyclic group and let \(H_{1}, H_{2}\) be two subgroups of order \(h_{1}, h_{2}\), respectively. Then
	\[
		|H_{1} \cap H_{2}| = \gcd(h_{1}, h_{2}).
	\]
\end{lemma}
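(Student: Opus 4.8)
The plan is to lean on the fundamental structure of finite cyclic groups. Write \(\size{C} = n\) and fix a generator \(x\). The key input is that for every divisor \(d\) of \(n\) there is exactly one subgroup of \(C\) of order \(d\), namely \(H_{d} := \grpgen{x^{n/d}}\), and that these subgroups form a lattice mirroring the divisor lattice of \(n\); concretely, \(H_{d} \leq H_{e}\) if and only if \(d \mid e\). Since \(H_{1}\) and \(H_{2}\) are subgroups of a cyclic group, their orders \(h_{1}, h_{2}\) divide \(n\), and by uniqueness we may identify \(H_{1} = H_{h_{1}}\) and \(H_{2} = H_{h_{2}}\).

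First I would observe that \(H_{1} \cap H_{2}\) is again a subgroup of the cyclic group \(C\), hence cyclic, so it equals \(H_{m}\) for a unique divisor \(m\) of \(n\); it then suffices to prove \(m = \gcd(h_{1}, h_{2})\). The argument splits into two divisibilities. For the upper bound, Lagrange's theorem applied to \(H_{1} \cap H_{2} \leq H_{1}\) and \(H_{1} \cap H_{2} \leq H_{2}\) shows that \(m = \size{H_{1} \cap H_{2}}\) divides both \(h_{1}\) and \(h_{2}\), hence \(m \mid \gcd(h_{1}, h_{2})\). For the lower bound, set \(g := \gcd(h_{1}, h_{2})\); since \(g \mid h_{1}\) and \(g \mid h_{2}\), the lattice property yields \(H_{g} \leq H_{h_{1}} = H_{1}\) and \(H_{g} \leq H_{h_{2}} = H_{2}\), so \(H_{g} \leq H_{1} \cap H_{2} = H_{m}\), giving \(g \mid m\). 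Combining the two divisibilities forces \(m = g\), as desired.

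A slightly more computational variant avoids the lattice lemma and instead works with exponents directly: writing \(a = n/h_{1}\) and \(b = n/h_{2}\), an element \(x^{k}\) lies in \(H_{1} \cap H_{2}\) precisely when \(k\) is a multiple of both \(a\) and \(b\) modulo \(n\), i.e.\ a multiple of \(\lcm(a, b)\). Since \(a, b \mid n\) one has \(\lcm(a,b) \mid n\), so \(H_{1} \cap H_{2} = \grpgen{x^{\lcm(a,b)}}\) has order \(n / \lcm(a, b)\); the elementary identity \(\lcm(n/h_{1}, n/h_{2}) = n / \gcd(h_{1}, h_{2})\) (checked prime by prime via \(v_{p}\)) then gives the result.

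There is no real obstacle here beyond citing the structure theorem for subgroups of a finite cyclic group (uniqueness of a subgroup of each order and the inclusion \(H_{d} \leq H_{e} \iff d \mid e\)), which is standard and could be quoted or established in a line using \(H_{d} = \grpgen{x^{n/d}}\). I would favour the first, lattice-based argument, since it is short and conceptual; the second is useful only if one prefers to keep everything at the level of explicit generators as in \cref{lem:FixedPointsCyclicGroup}.
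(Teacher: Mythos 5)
Your proposal is correct, and it actually contains two proofs. The ``slightly more computational variant'' you sketch at the end is precisely the paper's own argument: fix a generator \(x\), identify \(H_{i} = \grpgen{x^{n/h_{i}}}\), observe that \(H_{1} \cap H_{2} = \grpgen{x^{\lcm(n/h_{1}, n/h_{2})}}\), and finish with the identity \(n/\lcm(n/h_{1}, n/h_{2}) = \gcd(h_{1}, h_{2})\). Your preferred, lattice-based argument is a genuinely different route: instead of computing the intersection explicitly, it quotes the structure theorem for finite cyclic groups (one subgroup \(H_{d}\) per divisor \(d\) of \(n\), with \(H_{d} \leq H_{e}\) if and only if \(d \mid e\)) and pins down \(m := \size{H_{1} \cap H_{2}}\) by two divisibilities: \(m \mid \gcd(h_{1}, h_{2})\) from Lagrange applied to \(H_{1} \cap H_{2} \leq H_{i}\), and \(\gcd(h_{1}, h_{2}) \mid m\) from the lattice property, forcing equality. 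Both are complete. What the paper's computation buys is self-containedness at the level of explicit generators and exponents, matching the style of \cref{lem:FixedPointsCyclicGroup}, which is used side by side with this lemma later on; what your lattice argument buys is conceptual brevity, with the subgroup lattice of a cyclic group as the single citable input and no \(\lcm\)/\(\gcd\) manipulation needed.
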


\subsection{Character theory}
As mentioned in the introduction, we will determine Reidemeister numbers by counting fixed irreducible characters. In this section, we provide a precise statement and proof of this result, together with other necessary tools regarding irreducible characters.

Let \(G\) be a finite group. The \emph{dual group} is the group \(\dual{G} := \Hom(G, \Cunits)\) of group homomorphisms from \(G\) to \(\Cunits\). Note that \(\dual{G}\) is abelian. For two groups \(G, H\) and a homomorphism \(\phi: G \to H\), we define the group homomorphism
	\[
		\dual{\phi}: \dual{H} \to \dual{G}: \chi \mapsto \chi \circ \phi.
	\]
	It is well-known (see \eg \cite[Theorem~3.11]{ConradChar}) that if \(A\) is a finite abelian group, then \(\dual{A}\) is the group of irreducible characters of \(A\) and it is isomorphic to \(A\). Using this, one can prove more generally that the dual group \(\dual{G}\) of a finite group \(G\) is the group of all \(1\)-dimensional irreducible characters of \(G\) and is isomorphic to \(\dual{G / \gamma_{2}(G)} \cong G / \gamma_{2}(G)\) via the dual of the projection \(\pi: G \to G / \gamma_{2}(G)\).
\begin{prop}
	Let \(G\) be a finite group and let \(X := \{\chi_{1}, \ldots, \chi_{c}\}\) be the irreducible characters of \(G\). Let \(\phi \in \Aut(G)\). Then
	\[
		\Phi: X \to X: \chi \mapsto \chi \circ \phi
	\]
	is a well-defined bijection and \(R(\phi) = \size{\Fix(\Phi)}\)
\end{prop}
\begin{proof}
	Since \(\phi\) is an automorphism, \(\chi \circ \phi\) is an irreducible character whenever \(\chi\) is. Hence, \(\Phi\) is well-defined, and clearly, it is bijective. Let \(\Class(G)\) be the vector space of (complex valued) class functions on \(G\) and consider the linear map
	\[
		\Psi: \Class(G) \to \Class(G): f \mapsto f \circ \phi.
	\]
	We can consider two bases of \(\Class(G)\): the set \(X\) and the set \(\Delta := \{\Delta_{[x]} \mid [x] \in \Class\}\) of maps
	\[
		\Delta_{[x]}: G \to \C: g \mapsto \begin{cases}
			1	&	\mbox{if } g \in [x]	\\
			0	&	\mbox{if } g \notin [x],
		\end{cases}
	\]
	where \(\Class\) is the set of conjugacy classes of \(G\). The matrix representation of \(\Psi\) with respect to \(\Delta\) is a permutation matrix. There is a \(1\) on the diagonal if and only if the corresponding conjugacy class is fixed by \(\phi\). As \(R(\phi)\) is the number of fixed conjugacy classes under \(\phi\) by \cref{prop:ReidemeisterNumberEqualsNumberOfFixedConjugacyClasses}, we have that \(R(\phi) = \Tr(\Psi)\) (cf.\ \cite[Theorem~15]{Felshtyn00}). Similarly, the matrix representation of \(\Psi\) with respect to \(X\) is also a permutation matrix, since \(\Phi\) is a well-defined bijection. The trace of this matrix is precisely the number of \(1\)'s on the diagonal, which corresponds to the number of fixed points of \(\Phi\).
\end{proof}
\begin{defin}
	Let \(G\) be a finite group. For each divisor \(d\) of \(|G|\), we define \(\Irr_{d}(G)\) to be the set of all irreducible \(d\)-dimensional characters. We denote its cardinality by \(\ch_{d}(G)\). Given an automorphism \(\phi\) of \(G\), we write \(\ch_{d, \phi}(G)\) for the number of characters in \(\Irr_{d}(G)\) fixed by \(\phi\).
	
	If \(G\) is clear from the context, we simply write \(\ch_{d}\) and \(\ch_{d, \phi}\).
\end{defin}
\begin{lemma}	\label{lem:FixedCharactersDimension1}
	Let \(G\) be a finite group and \(\phi \in \Aut(G)\). Then \(\ch_{1, \phi} = R(\ab{\phi})\), where \(\ab{\phi}\) is the induced automorphism on \(G / \gamma_{2}(G)\).
\end{lemma}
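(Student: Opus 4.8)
The plan is to realise both \(\ch_{1, \phi}\) and \(R(\ab{\phi})\) as the number of fixed points of one and the same map on the dual of the abelianisation \(\ab{G} = G/\gamma_{2}(G)\), and to transport this count across the natural isomorphism induced by the projection \(\pi\colon G \to \ab{G}\).

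First I would invoke \cref{cor:1dimensionalCharactersFiniteGroup}: the \(1\)-dimensional irreducible characters of \(G\) are exactly the elements of \(\dual{G}\), and \(\pi\) induces an isomorphism \(\dual{\pi}\colon \dual{\ab{G}} \to \dual{G}\), \(\chi \mapsto \chi \circ \pi\). Hence every \(1\)-dimensional character of \(G\) is uniquely of the form \(\bar{\chi} \circ \pi\) with \(\bar{\chi} \in \dual{\ab{G}}\), and by definition \(\ch_{1, \phi}\) is the number of such characters fixed by \(\phi\), i.e.\ satisfying \((\bar{\chi} \circ \pi) \circ \phi = \bar{\chi} \circ \pi\).

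The key step is to convert this condition on \(\dual{G}\) into a fixed-point condition for \(\dual{\ab{\phi}}\) on \(\dual{\ab{G}}\). Since \(\ab{\phi}\) is the automorphism induced by \(\phi\), it satisfies \(\pi \circ \phi = \ab{\phi} \circ \pi\); therefore \((\bar{\chi} \circ \pi) \circ \phi = \bar{\chi} \circ (\ab{\phi} \circ \pi) = (\bar{\chi} \circ \ab{\phi}) \circ \pi\), and as \(\pi\) is surjective this equals \(\bar{\chi} \circ \pi\) if and only if \(\bar{\chi} \circ \ab{\phi} = \bar{\chi}\). Thus \(\dual{\pi}\) restricts to a bijection from \(\Fix(\dual{\ab{\phi}})\) onto the set of \(\phi\)-fixed \(1\)-dimensional characters of \(G\) — equivalently, \(\dual{\pi}\) conjugates \(\dual{\ab{\phi}}\) to \(\dual{\phi}\) — whence \(\ch_{1, \phi} = \size{\Fix(\dual{\ab{\phi}})}\).

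Finally I would identify \(\size{\Fix(\dual{\ab{\phi}})}\) with \(R(\ab{\phi})\). As \(\ab{G}\) is finite abelian, all its irreducible characters are \(1\)-dimensional and, by \cref{lem:charactersFiniteAbelianGroup}, are exactly the elements of \(\dual{\ab{G}}\); the bijection on irreducible characters associated with \(\ab{\phi}\) is then precisely \(\dual{\ab{\phi}}\), so the corollary expressing a Reidemeister number as its number of fixed irreducible characters, applied to \(\ab{G}\) and \(\ab{\phi}\), gives \(R(\ab{\phi}) = \size{\Fix(\dual{\ab{\phi}})}\). Combining the two displays yields \(\ch_{1, \phi} = R(\ab{\phi})\). (Alternatively, one may use \cref{prop:ReidemeisterNumberEqualsNumberOfFixedConjugacyClasses} together with the observation that, for a finite abelian group, the endomorphism \(a \mapsto \ab{\phi}(a)\inv{a}\) gives \(\size{\Fix(\ab{\phi})} = \size{\Fix(\dual{\ab{\phi}})}\).) I expect the only delicate point to be the bookkeeping forced by the contravariance of dualisation: one must keep the directions of \(\dual{\pi}\) and \(\dual{\ab{\phi}}\) straight, and use both \(\pi \circ \phi = \ab{\phi} \circ \pi\) (which also records that \(\gamma_{2}(G)\) is characteristic, so \(\ab{\phi}\) is well defined) and the surjectivity of \(\pi\); beyond this the argument is a direct assembly of the cited results.
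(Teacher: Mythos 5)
Your proof is correct, but its second half takes a genuinely different route from the paper's. Both arguments begin the same way: you use \cref{cor:1dimensionalCharactersFiniteGroup} and the intertwining relation \(\pi \circ \phi = \ab{\phi} \circ \pi\) to show that \(\dual{\pi}\) conjugates \(\dual{\ab{\phi}}\) to \(\dual{\phi}\), hence \(\ch_{1, \phi} = \size{\Fix(\dual{\ab{\phi}})}\); this matches the paper's reduction to the abelian quotient. Where you diverge is in evaluating \(\size{\Fix(\dual{\ab{\phi}})}\). The paper proves, via an orthogonality argument (\(\size{H^{\bot}} = [G : H]\) and \(\ker(\dual{\psi}) = \im(\psi)^{\bot}\), applied to the endomorphism \(\phi - \Id\)), that \(\size{\Fix(\psi)} = \size{\Fix(\dual{\psi})}\) for any automorphism \(\psi\) of a finite abelian group, and then invokes \(R(\psi) = \size{\Fix(\psi)}\) from \cref{prop:ReidemeisterNumberEqualsNumberOfFixedConjugacyClasses}. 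You instead apply the paper's unnumbered corollary (the trace argument giving \(R(\phi) = \size{\Fix(\Phi)}\), the number of fixed irreducible characters, for \emph{any} finite group) directly to \(\ab{G}\) and \(\ab{\phi}\), noting via \cref{lem:charactersFiniteAbelianGroup} that the irreducible characters of \(\ab{G}\) are exactly \(\dual{\ab{G}}\), so that the character permutation \(\Phi\) is literally \(\dual{\ab{\phi}}\). This is shorter and cleanly recycles an already-proven result with no circularity. What the paper's longer route buys is \cref{cor:nbFixedPointsDualMap} (\(\size{\Fix(\phi)} = \size{\Fix(\dual{\phi})}\) for finite abelian groups) as a by-product, which is needed again later in the proof of \cref{prop:generalExpressionchpphi}; with your argument that corollary would still require its own proof. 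One small caveat: your parenthetical ``alternative'' at the end is under-justified --- the claim that the endomorphism \(a \mapsto \ab{\phi}(a)\inv{a}\) yields \(\size{\Fix(\ab{\phi})} = \size{\Fix(\dual{\ab{\phi}})}\) is precisely the nontrivial duality content that the paper's orthogonality computation exists to establish --- but since your main line of argument does not rely on it, the proof stands.
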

\begin{proof}
	As mentioned earlier, we know that \(\Irr_{1}(G) = \dual{G}\) is isomorphic to \(\dual{\frac{G}{\gamma_{2}(G)}}\) via \(\dual{\pi}\), the dual of the canonical projection \(G \to G / \gamma_{2}(G)\). The map \(\dual{\phi}\) on \(\Irr_{1}(G)\) is thus linked to \(\dual{\ab{\phi}}\) on \(\dual{\frac{G}{\gamma_{2}(G)}}\) via
	\[
		\dual{\phi} = \dual{\pi} \circ \dual{\ab{\phi}} \circ \inv{\dual{\pi}}.
	\]
	Their number of fixed points will therefore be the same. Therefore, we may assume that \(G\) is abelian, since we can work with \(\ab{\phi}\).
	
	So, suppose that \(G\) is abelian and let \(\phi\) be an automorphism of \(G\). We know that \(R(\phi) = \size{\Fix(\phi)}\) due to \cref{prop:ReidemeisterNumberEqualsNumberOfFixedConjugacyClasses} and that, by definition, \(\ch_{1, \phi} = \size{\Fix(\dual{\phi})}\). Therefore, we need to prove that \(\size{\Fix(\phi)} = \size{\Fix(\dual{\phi})}\).
	
	For a subgroup \(H\) of \(G\), we define (based on \cite[Exercise~7, p.~11]{ConradChar})
	\[
		H^{\bot} := \{\chi \in \dual{G} \mid \forall h \in H: \chi(h) = 1\}.
	\]
	We claim that \(\size{H^{\bot}} = [G : H]\) and that
	\[
		\ker(\dual{\psi}) = \im(\psi)^{\bot}
	\]
	for all \(\psi \in \End(G)\).
	
	For the first claim, let \(\pi_{H}: G \to G / H\) be the natural projection. Then the map
	\[
		F : \dual{G / H} \to H^{\bot}: \chi \mapsto \chi \circ \pi_{H}
	\]
	is bijective. Indeed, it is clearly injective and if \(\tilde{\chi}\) is a homomorphism in \(H^{\bot}\), then it induces a homomorphism \(\chi\) of \(G / H\), as it vanishes on \(H\). Then \(\tilde{\chi} = \chi \circ \pi_{H}\), proving that \(F\) is surjective. Since \(\dual{G / H} \cong G / H\), they have the same size, namely \([G : H]\), implying that \(\size{H^{\bot}} = [G : H]\).
	
	For the second claim, let \(\psi \in \End(G)\). Then \(\chi \circ \psi\) is the trivial homomorphism if and only if \(\chi(\psi(g)) = 1\) for all \(g \in G\), therefore, if and only if \(\chi(h) = 1\) for all \(h \in \im \psi\).
	
	Now, consider the endomorphism \(\phi - \Id_{G}\) (recall that \(G\) was assumed to be abelian). Then both results combined say that
	\begin{equation*}	\label{eq:nbFixedPointsDualMap}
		\size{\ker(\dual{\phi - \Id_{G}})} = [G : \im(\phi - \Id_{G})] = \size{\ker(\phi - \Id_{G})} = \size{\Fix(\phi)} = R(\phi).
	\end{equation*}
	Finally, note that for \(\chi \in \dual{G}\), we have
	\begin{align*}
		\dual{\phi - \Id_{G}}(\chi) = 0	& \iff \forall g \in G: \chi(\phi(g)) = \chi(g)	\\
								& \iff \chi \circ \phi = \chi	\\
								& \iff \chi \in \Fix(\dual{\phi}).
	\end{align*}
	Therefore, \(\size{\ker(\dual{\phi - \Id_{G}})} = \size{\Fix(\dual{\phi})}\). Consequently,
	\[
		\ch_{1, \phi} = \size{\Fix(\dual{\phi})} = \size{\ker(\dual{\phi - \Id_{G}})} = \size{\Fix(\phi)}= R(\phi),
	\]
	finishing the proof.
\end{proof}
For future reference, we state the equality \(\size{\Fix(\phi)} = \size{\Fix(\dual{\phi})}\) as a separate result. 
\begin{cor}	\label{cor:nbFixedPointsDualMap}
	Let \(G\) be a finite abelian group and \(\phi \in \Aut(G)\). Then \(\size{\Fix(\phi)} = \size{\Fix(\dual{\phi})}\).
\end{cor}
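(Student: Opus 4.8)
The plan is to observe that this equality was in fact already derived in the course of proving \cref{lem:FixedCharactersDimension1}: there, after the reduction to the abelian case, the author showed that \(\ch_{1, \phi} = \size{\Fix(\dual{\phi})} = \size{\ker(\dual{\phi - \Id_{G}})} = \size{\Fix(\phi)}\), and the two outer terms are exactly the statement to be proved here. So I would simply isolate the portion of that argument that never refers to \(1\)-dimensional characters as such, but only to a finite abelian group \(G\) and its dual. The single genuine ingredient is the annihilator construction: for a subgroup \(H \leq G\) one sets \(H^{\bot} := \{\chi \in \dual{G} \mid \forall h \in H: \chi(h) = 1\}\).

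First I would record the two facts about \(H^{\bot}\). The counting fact \(\size{H^{\bot}} = [G : H]\) holds because restriction along the projection \(G \to G/H\) identifies \(\dual{G/H}\) with \(H^{\bot}\), while \(\dual{G/H} \cong G/H\) by the self-duality of finite abelian groups (\cref{lem:charactersFiniteAbelianGroup}); hence \(\size{H^{\bot}} = \size{G/H} = [G : H]\). The functorial fact \(\ker(\dual{\psi}) = (\im \psi)^{\bot}\) for \(\psi \in \End(G)\) is immediate from unwinding the definitions: \(\chi \circ \psi\) is the trivial homomorphism precisely when \(\chi\) kills every element of \(\im \psi\).

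Next I would apply both facts to the endomorphism \(\psi = \phi - \Id_{G}\), writing \(G\) additively. Combining them gives \(\size{\ker(\dual{\phi - \Id_{G}})} = [G : \im(\phi - \Id_{G})] = \size{\ker(\phi - \Id_{G})}\), where the last equality is the first isomorphism theorem applied to \(\phi - \Id_{G}\), which yields \(\size{G} = \size{\im(\phi - \Id_{G})} \cdot \size{\ker(\phi - \Id_{G})}\). To close the loop I would identify the kernels with fixed-point sets: on the source side \(\ker(\phi - \Id_{G}) = \Fix(\phi)\), and on the dual side \(\ker(\dual{\phi - \Id_{G}}) = \Fix(\dual{\phi})\), since \(\dual{\phi - \Id_{G}}(\chi) = 0\) is equivalent to \(\chi \circ \phi = \chi\). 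Chaining these equalities gives \(\size{\Fix(\phi)} = \size{\Fix(\dual{\phi})}\).

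I do not expect any real obstacle, as the corollary is essentially a verbatim by-product of the preceding lemma's proof; the only point requiring mild care is the counting identity \(\size{H^{\bot}} = [G : H]\), which rests on the self-duality \(\dual{A} \cong A\) of finite abelian groups. If one wanted a shorter conceptual route, one could instead invoke Pontryagin duality directly: under the canonical identification \(G \cong \dual{\dual{G}}\), the map \(\dual{\phi}\) is the transpose of \(\phi\), and a homomorphism and its transpose have fixed subgroups of equal order on a finite abelian group. However, the annihilator argument above is entirely self-contained within the machinery already established in this section, so that is the version I would write out.
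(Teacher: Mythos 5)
Your proposal is correct and matches the paper exactly: the paper states this corollary without a separate proof, precisely because the equality \(\size{\Fix(\phi)} = \size{\Fix(\dual{\phi})}\) was already established inside the proof of \cref{lem:FixedCharactersDimension1} via the annihilator argument (\(\size{H^{\bot}} = [G:H]\), \(\ker(\dual{\psi}) = \im(\psi)^{\bot}\), applied to \(\psi = \phi - \Id_{G}\)) that you reproduce. Your only addition is making the implicit use of the first isomorphism theorem explicit, which is harmless.
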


\subsection{Characters of \(A \rtimes C_{p}\)}	\label{subsec:CharactersSemidirectProduct}
In order to apply the technique of counting fixed characters, we need an expression for the characters on semi-direct products of the form \(A \rtimes C_{p}\), with \(A\) abelian. We recall a more general construction, details of which can be found in \cite[\S8.2]{Serre77}.

Suppose \(G = A \rtimes H\) is a semi-direct product of a finite abelian group \(A\) and another finite group \(H\). Then \(H\) acts on \(\dual{A}\) via conjugation, \ie
\[
	(h \cdot \chi)(a) := \chi(a^{h})
\]
for \(h \in H, a \in A\) and \(\chi \in \dual{A}\).
Let \(\{\chi_{1}, \ldots, \chi_{m}\}\) be a system of representatives of the orbits in \(\dual{A}\) of this action. For \(i \in \{1, \ldots, m\}\), let \(H_{i}\) be the stabiliser of \(\chi_{i}\) and put \(G_{i} := A \rtimes H_{i} \leq G\). We can extend \(\chi_{i}\) to \(G_{i}\) by putting \(\chi_{i}(ah) := \chi_{i}(a)\) for \(a \in A, h \in H_{i}\). Since \(h \cdot \chi_{i} = \chi_{i}\) for all \(h \in H_{i}\), we see that \(\chi_{i}\) is a \(1\)-dimensional character of \(G_{i}\). Next, let \(\rho\) be an irreducible representation of \(H_{i}\). Then composing \(\rho\) with the canonical projection from \(G_{i}\) to \(H_{i}\) yields an irreducible representation \(\tilde{\rho}\) of \(G_{i}\). Finally, put
\[
	\theta_{i, \rho} := \Ind_{G_{i}}^{G} (\chi_{i} \otimes \tilde{\rho}),
\]
the induced representation of \(\chi_{i} \otimes \tilde{\rho}\). Note that \(\dim(\theta_{i, \rho}) = [G : G_{i}] \dim \rho = [H : H_{i}] \dim \rho\). 
\begin{theorem}[{See \eg \cite[Proposition 25 \& Theorem 12]{Serre77}}]	\label{theo:RepresentationsSemidirectProduct}
	Let \(G = A \rtimes H\) be finite with \(A\) abelian. Then the following hold:
	\begin{enumerate}[1)]
		\item Each irreducible representation of \(G\) is isomorphic to a representation \(\theta_{i, \rho}\) of the form constructed above.
		\item Two representations \(\theta_{i, \rho}\) and \(\theta_{j, \sigma}\) are isomorphic if and only if \(i = j\) and \(\rho\) and \(\sigma\) are isomorphic.
		\item The character \(\overline{\chi}_{i, \rho}\) of \(\theta_{i, \rho}\) is given by
	\[
		\overline{\chi}_{i, \rho}(g) = \sum\limits_{\substack{h \in R \\ g^{h} \in G_{i}}} (\chi_{i} \otimes \chi_{\tilde{\rho}})(g^{h}) = \frac{1}{\size{G_{i}}} \sum\limits_{\substack{h \in G \\ g^{h} \in G_{i}}} (\chi_{i} \otimes \chi_{\tilde{\rho}})(g^{h})
	\]
	where \(R\) is a set of representatives of \(G / G_{i}\).
	\end{enumerate}
\end{theorem}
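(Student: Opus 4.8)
The plan is to follow the method of little groups of Wigner and Mackey, which in the present situation (where the normal subgroup \(A\) is abelian) is exactly Clifford theory for an abelian normal subgroup. The organising principle is the restriction to \(A\). Since \(A \normal G\), for any irreducible representation \(\theta\) of \(G\) the restriction \(\theta|_{A}\) decomposes into one-dimensional characters of \(A\), and by Clifford's theorem the characters that occur form a single orbit under the conjugation action of \(G\) on \(\dual{A}\). Because \(A\) acts trivially on \(\dual{A}\), this is precisely the \(H\)-action defined before the statement, so \(\theta\) singles out a unique orbit and hence a unique index \(i\). This correspondence is what will make parts 1) and 2) bookkeeping once irreducibility is in hand.

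The first real task is to show that each \(\theta_{i, \rho}\) is irreducible. The inner representation \(\chi_{i} \otimes \tilde{\rho}\) is irreducible: \(\tilde{\rho}\) is the pullback of the irreducible \(\rho\) along the surjection \(G_{i} \to H_{i}\) and so has no proper invariant subspace, and tensoring with the one-dimensional \(\chi_{i}\) preserves this. Irreducibility of the induced representation \(\theta_{i,\rho} = \Ind_{G_{i}}^{G}(\chi_{i} \otimes \tilde{\rho})\) then follows from Mackey's irreducibility criterion: for every \(s \in G \setminus G_{i}\) one must check that the restrictions to \(G_{i} \cap \inv{s} G_{i} s\) of \(\chi_{i} \otimes \tilde{\rho}\) and of its \(s\)-conjugate share no irreducible constituent. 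This reduces to the behaviour on \(A\), which lies in every such intersection: on \(A\) the first is a multiple of \(\chi_{i}\) while the second is a multiple of \(s \cdot \chi_{i}\), and since \(s \notin G_{i}\) the \(H\)-component of \(s\) lies outside \(H_{i} = \Stab(\chi_{i})\), so \(s \cdot \chi_{i} \ne \chi_{i}\) and the two restrictions are already disjoint over \(A\). \emph{This verification of Mackey's hypotheses is the step I expect to carry the real content of the argument.}

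With irreducibility established, parts 2) and 1) are short. For 2), an isomorphism \(\theta_{i,\rho} \cong \theta_{j,\sigma}\) forces, upon restriction to \(A\), the associated orbits to coincide, so \(i = j\); Frobenius reciprocity (equivalently, the same intertwining computation) then yields \(\rho \cong \sigma\). For 1), the cleanest route is a dimension count. Using \(\dim \theta_{i,\rho} = [H : H_{i}] \dim \rho\) and \(\sum_{\rho \in \Irr(H_{i})} (\dim \rho)^{2} = \size{H_{i}}\), one finds \(\sum_{\rho} (\dim \theta_{i,\rho})^{2} = \size{H}^{2} / \size{H_{i}}\); summing over the orbits and using that the orbit sizes \([H : H_{i}]\) add up to \(\size{\dual{A}} = \size{A}\) gives \(\sum_{i, \rho} (\dim \theta_{i,\rho})^{2} = \size{H} \cdot \size{A} = \size{G}\). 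As the \(\theta_{i,\rho}\) are irreducible and pairwise non-isomorphic, they therefore exhaust the irreducible representations of \(G\).

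Finally, part 3) is the standard formula for the character of an induced representation applied to \(\Ind_{G_{i}}^{G}(\chi_{i} \otimes \tilde{\rho})\), which directly produces the normalised expression \(\frac{1}{\size{G_{i}}} \sum_{h \in G,\, g^{h} \in G_{i}} (\chi_{i} \otimes \chi_{\tilde{\rho}})(g^{h})\). To obtain the first (transversal) form, one replaces the sum over \(G\) by a sum over a transversal \(R\) of \(G / G_{i}\): within each coset the \(\size{G_{i}}\) terms are equal, since \(\chi_{i} \otimes \chi_{\tilde{\rho}}\) is a class function on \(G_{i}\), and these cancel the factor \(\frac{1}{\size{G_{i}}}\).
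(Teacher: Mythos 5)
Your proposal is mathematically sound, but there is nothing in the paper to compare it against: the paper does not prove this theorem at all, it quotes it from the cited source (Serre, \S 8.2, Proposition~25 and Theorem~12). Measured against that source, your argument differs in two places. For irreducibility, Serre works directly with the \(A\)-isotypic decomposition of the induced space: a nonzero \(G\)-invariant subspace must meet some conjugate of the \(\chi_{i}\)-isotypic component, and \(G_{i}\)-irreducibility of \(\chi_{i} \otimes \tilde{\rho}\) then forces it to be everything. This is the same disjointness fact (\(s \cdot \chi_{i} \ne \chi_{i}\) for \(s \notin G_{i}\)) that you route through Mackey's criterion, and your verification of the Mackey hypothesis is correct for exactly the reason you highlight: \(A\) is normal, so it lies in every intersection \(G_{i} \cap \inv{s} G_{i} s\), and since \(A\) is abelian the conjugation action on \(\dual{A}\) factors through \(H\), so \(s \notin G_{i}\) indeed gives \(s \cdot \chi_{i} \ne \chi_{i}\). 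For exhaustiveness (part 1), Serre is constructive: given an irreducible \(\sigma\) of \(G\), restrict to \(A\), pick an orbit representative \(\chi_{i}\) occurring in it, extract an irreducible \(G_{i}\)-subrepresentation of the \(\chi_{i}\)-isotypic component (necessarily of the form \(\chi_{i} \otimes \tilde{\rho}\)), and use Frobenius reciprocity to produce a nonzero map \(\theta_{i, \rho} \to \sigma\). Your sum-of-squares count \(\sum_{i, \rho} (\dim \theta_{i,\rho})^{2} = \size{H} \sum_{i} [H : H_{i}] = \size{H} \cdot \size{A} = \size{G}\) is a legitimate and slicker alternative, at the cost of being non-constructive: it shows the list is complete without telling you which \(\theta_{i, \rho}\) a given irreducible is, which Serre's argument does (and which is closer in spirit to how the paper later identifies induced characters). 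One point you should write out rather than wave at: in part 2), "Frobenius reciprocity yields \(\rho \cong \sigma\)" requires either the Mackey double-coset decomposition of the restriction of \(\theta_{i, \sigma}\) to \(G_{i}\), where only the trivial coset survives (again by disjointness over \(A\)), or the observation that the \(\chi_{i}\)-isotypic component of \(\theta_{i, \rho}\) is \(\chi_{i} \otimes \tilde{\rho}\) as a \(G_{i}\)-representation; your parenthetical "same intertwining computation" gestures at this, but it is the one step of part 2) carrying actual content.
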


We now consider the case where \(H = C_{p}\) and \(A\) is any (non-trivial) finite abelian group. Throughout the remainder of the section, we put \(G = A \rtimes_{\alpha} C_{p}\), where \(\alpha: C_{p} \to \Aut(A)\), and we fix a generator \(y\) of \(C_{p}\).

\begin{lemma}	\label{lem:DimensionCharactersACp}
	Each irreducible character of \(G\) has dimension \(1\) or \(p\).
\end{lemma}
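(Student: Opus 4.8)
The plan is to read off the conclusion from \cref{theo:RepresentationsSemidirectProduct} applied to the present situation $H = C_{p}$. That theorem guarantees that every irreducible representation of $G = A \rtimes_{\alpha} C_{p}$ is isomorphic to one of the form $\theta_{i, \rho} = \Ind_{G_{i}}^{G}(\chi_{i} \otimes \tilde{\rho})$, and it records the dimension formula $\dim(\theta_{i, \rho}) = [C_{p} : H_{i}] \dim \rho$, where $H_{i} \leq C_{p}$ is the stabiliser of the orbit representative $\chi_{i} \in \dual{A}$ and $\rho$ ranges over the irreducible representations of $H_{i}$. Thus the entire question reduces to determining the possible values of the two factors $[C_{p} : H_{i}]$ and $\dim \rho$.

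The key input is that $p$ is prime: the only subgroups of $C_{p}$ are the trivial subgroup and $C_{p}$ itself, so $H_{i} \in \{1, C_{p}\}$ and consequently $[C_{p} : H_{i}] \in \{1, p\}$. Moreover, in either case $H_{i}$ is cyclic, hence abelian, so by \cref{lem:charactersFiniteAbelianGroup} all its irreducible representations are $1$-dimensional, i.e.\ $\dim \rho = 1$. I would spell this out as two cases. If $\chi_{i}$ is fixed by all of $C_{p}$, then $H_{i} = C_{p}$, so $[C_{p} : H_{i}] = 1$ and $\dim(\theta_{i, \rho}) = 1$. Otherwise the orbit of $\chi_{i}$ is nontrivial, forcing $H_{i} = 1$, so $[C_{p} : H_{i}] = p$, and since the trivial group has only the $1$-dimensional representation we obtain $\dim(\theta_{i, \rho}) = p$.

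Combining the two factors gives $\dim(\theta_{i, \rho}) = [C_{p} : H_{i}] \cdot 1 \in \{1, p\}$ in every case, which is exactly the assertion. There is no real obstacle here: all the substantive work is contained in \cref{theo:RepresentationsSemidirectProduct}, and the only extra ingredient specific to our setting is the primality of $p$, which removes every intermediate subgroup and collapses the possibilities for the index to $1$ and $p$.
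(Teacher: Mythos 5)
Your proof is correct and follows essentially the same route as the paper's: both apply \cref{theo:RepresentationsSemidirectProduct}, use the primality of \(p\) to conclude that the stabiliser \(H_{i}\) is either trivial or all of \(C_{p}\), and observe that \(\dim \rho = 1\) in both cases since \(H_{i}\) is abelian. Your citation of \cref{lem:charactersFiniteAbelianGroup} just makes explicit a fact the paper's proof uses implicitly.
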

\begin{proof}
Let \(\chi \in \dual{A}\) be a character. Its stabiliser for the action of \(C_{p}\) is either \(1\) or \(C_{p}\). If its stabiliser is \(C_{p}\), then the representation \(\rho\) in the construction above is \(1\)-dimensional. Therefore, the induced character has dimension \([G : G] \dim \rho = 1\).

If the stabiliser equals \(1\), then \(\rho\) and \(\tilde{\rho}\) are both trivial representations. Thus, the induced character has dimension \([G : A] \dim \rho = [C_{p} : 1] = p\).
\end{proof}

If \(\chi \in \dual{A}\) is a character inducing a \(p\)-dimensional character on \(G\), we denote this induced character by \(\overline{\chi}\). We can simplify the explicit expression for the induced characters in \(\Irr_{p}(G)\).

\begin{lemma}	\label{lem:pCharacterACp}
	Let \(\chi\) be a character on \(A\) inducing a \(p\)-dimensional character \(\overline{\chi}\) on \(G\). Then
	\[
		\overline{\chi}(g) = \begin{cases}
			0	&	\mbox{if } g \not \in A	\\
			\sum\limits_{i = 0}^{p - 1} \chi(\alpha(y)^{i}(g))	&	\mbox{if } g \in A
		\end{cases}
	\]
\end{lemma}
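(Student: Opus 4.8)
The plan is to specialise the general character formula of \cref{theo:RepresentationsSemidirectProduct} to our setting. Since \(\overline{\chi}\) is \(p\)-dimensional, the proof of \cref{lem:DimensionCharactersACp} tells us that \(\chi\) has trivial stabiliser, so the associated subgroup is \(G_{i} = A \rtimes 1 = A\) and the representation \(\rho\) (hence \(\tilde{\rho}\)) is trivial. Consequently the \(1\)-dimensional character \(\chi_{i} \otimes \chi_{\tilde{\rho}}\) on \(G_{i}\) is nothing but \(\chi\) itself on \(A\), and part 3) of \cref{theo:RepresentationsSemidirectProduct} reduces to
\[
	\overline{\chi}(g) = \sum_{\substack{h \in R \\ g^{h} \in A}} \chi(g^{h}),
\]
where \(R\) is a set of representatives of \(G / A\). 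As \([G : A] = p\) and \(\grpgen{y}\) maps isomorphically onto \(G / A\), I would take \(R = \{1, y, \ldots, y^{p-1}\}\).

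First I would treat the case \(g \notin A\). Since \(A\) is normal in \(G\), the quotient \(G / A \cong C_{p}\) is abelian, so conjugation acts trivially on it; equivalently, \(g^{h}\) and \(g\) have the same image in \(G / A\) for every \(h\). If \(g \notin A\) this common image is nontrivial, whence \(g^{h} \notin A\) for all \(h \in R\). The index set of the sum is therefore empty and \(\overline{\chi}(g) = 0\), as claimed.

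For \(g \in A\), normality of \(A\) guarantees \(g^{y^{i}} \in A\) for every \(i\), so all \(p\) representatives contribute. Unwinding the conjugation in the semidirect product gives \(g^{y^{i}} = y^{-i} g y^{i} = \alpha(y)^{-i}(g)\) (using the convention \(y a \inv{y} = \alpha(y)(a)\)), so that
\[
	\overline{\chi}(g) = \sum_{i = 0}^{p - 1} \chi\bigl(\alpha(y)^{-i}(g)\bigr).
\]
It remains to reconcile this with \(\sum_{i = 0}^{p - 1} \chi(\alpha(y)^{i}(g))\). Since \(\alpha\) is a homomorphism and \(y\) has order \(p\), we have \(\alpha(y)^{p} = \alpha(y^{p}) = \Id\), so \(i \mapsto -i \bmod p\) is a bijection of \(\{0, \ldots, p-1\}\) that merely permutes the summands, yielding the stated formula.

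The main thing to be careful about is the interplay between the choice of coset representatives and the conjugation action: one must verify that the sum in \cref{theo:RepresentationsSemidirectProduct} genuinely ranges over a full transversal of \(G / A\), so that every power of \(\alpha(y)\) occurs exactly once in the case \(g \in A\), while it collapses to the empty sum when \(g \notin A\). The reindexing \(i \mapsto -i\) is cosmetic but worth stating, since it is precisely what matches the direction of conjugation to the direction of the sum in the claim, and it is harmless exactly because \(\alpha(y)\) has order dividing \(p\).
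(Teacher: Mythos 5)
Your proof is correct and follows essentially the same route as the paper's: specialising part 3) of \cref{theo:RepresentationsSemidirectProduct} with \(G_{i} = A\), \(\tilde{\rho}\) trivial, and \(\{1, y, \ldots, y^{p-1}\}\) as transversal, then using normality of \(A\) to split the cases \(g \in A\) and \(g \notin A\). The only difference is your conjugation convention (\(y a \inv{y} = \alpha(y)(a)\) versus the paper's \(\inv{y} a y = \alpha(y)(a)\)), which your reindexing \(i \mapsto -i \bmod p\) correctly and harmlessly absorbs.
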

\begin{proof}
Since \(\overline{\chi}\) is \(p\)-dimensional, the stabiliser of \(\chi\) under the action of \(C_{p}\) is trivial. Therefore, the \(G_{i}\) from the construction equals \(A\) and the group \(C_{p}\) can be used as a set of representatives of \(G / A\). Recall that this also implies that the representation \(\rho\) in the construction is the trivial representation. Consequently, if \(g \in A\), then \((\chi \otimes \tilde{\rho})(g^{y^{i}}) = \chi(\alpha(y)^{i}(g))\). Since \(A\) is normal in \(G\), either all conjugates of \(g\) lie in \(A\) or none of them do. Hence, if \(g \in A\),
\[
	\overline{\chi}(g) = \sum\limits_{i = 0}^{p - 1}\chi(\alpha(y)^{i}(g))
\]
and if \(g \notin A\), then \(\overline{\chi}(g) = 0\).
\end{proof}

\cref{lem:FixedCharactersDimension1} allows us to determine the number of characters in \(\Irr_{1}(G)\) fixed by a given automorphism. For those in \(\Irr_{p}(G)\), we can use the following criterion.

\begin{lemma}	\label{lem:FixedCharactersDimensionp}
	Let \(\overline{\chi}\) be a character in \(\Irr_{p}(G)\) and \(\phi \in \Aut(G)\) such that \(\phi(A) = A\). Denote by \(\phi'\) the induced automorphism on \(A\). Then \(\overline{\chi}\) is fixed by \(\phi\) if and only if there is an \(i \in \{0, \ldots, p - 1\}\) such that \(\chi \circ \phi' = \chi \circ \alpha(y)^{i}\).
\end{lemma}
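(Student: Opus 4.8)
The plan is to reduce the statement to the classification of the \(p\)-dimensional irreducible characters in \cref{theo:RepresentationsSemidirectProduct}. Since \(\overline{\chi}\) is fixed by \(\phi\) exactly when \(\overline{\chi} \circ \phi = \overline{\chi}\), I would first observe that \(\overline{\chi} \circ \phi\) is again an irreducible character of \(G\) by \cref{lem:imageRepresentationUnderAutomorphism}, and that it has dimension \((\overline{\chi} \circ \phi)(1) = \overline{\chi}(\phi(1)) = \overline{\chi}(1) = p\). Hence \(\overline{\chi} \circ \phi \in \Irr_{p}(G)\) and, by the first part of \cref{theo:RepresentationsSemidirectProduct} together with \cref{lem:DimensionCharactersACp}, there is a character \(\psi \in \dual{A}\) with trivial stabiliser such that \(\overline{\chi} \circ \phi = \overline{\psi}\). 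The task then becomes to identify \(\psi\) up to its \(C_{p}\)-orbit.

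To do so, I would compute the restriction of \(\overline{\chi} \circ \phi\) to \(A\) in two ways. Because \(\phi(A) = A\), an element \(g\) lies in \(A\) if and only if \(\phi(g)\) does, so \(\overline{\chi} \circ \phi\) vanishes outside \(A\); and for \(g \in A\) we have \(\phi(g) = \phi'(g) \in A\), whence \cref{lem:pCharacterACp} gives \((\overline{\chi} \circ \phi)(g) = \sum_{i = 0}^{p - 1} \chi(\alpha(y)^{i}(\phi'(g)))\). In other words, the restriction equals \(\sum_{i = 0}^{p-1} \chi \circ \alpha(y)^{i} \circ \phi'\). On the other hand, applying \cref{lem:pCharacterACp} to \(\overline{\psi}\) shows that its restriction to \(A\) is \(\sum_{i=0}^{p-1} \psi \circ \alpha(y)^{i}\), which is exactly the sum over the \(C_{p}\)-orbit of \(\psi\). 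Since \(\phi'\) is an automorphism of \(A\), precomposition with \(\phi'\) is a bijection of \(\dual{A}\), so the \(p\) characters \(\chi \circ \alpha(y)^{i} \circ \phi'\) are pairwise distinct. Comparing the two expressions and using the linear independence of distinct irreducible characters of \(A\), I would conclude that the orbit of \(\psi\) is precisely \(\{\chi \circ \alpha(y)^{i} \circ \phi' \mid 0 \leq i \leq p - 1\}\). As the term with \(i = 0\) is \(\chi \circ \phi'\), this orbit coincides with the orbit of \(\chi \circ \phi'\), and therefore \(\overline{\chi} \circ \phi = \overline{\chi \circ \phi'}\).

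With this identity in hand, the equivalence follows quickly. The character \(\overline{\chi}\) is fixed by \(\phi\) if and only if \(\overline{\chi} = \overline{\chi} \circ \phi = \overline{\chi \circ \phi'}\), and by the second part of \cref{theo:RepresentationsSemidirectProduct} this happens if and only if \(\chi\) and \(\chi \circ \phi'\) lie in the same \(C_{p}\)-orbit. Recalling that the orbit of \(\chi\) consists precisely of the characters \(\chi \circ \alpha(y)^{i}\) (the summands appearing in \cref{lem:pCharacterACp}), this is equivalent to the existence of an \(i \in \{0, \ldots, p-1\}\) with \(\chi \circ \phi' = \chi \circ \alpha(y)^{i}\), as claimed.

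The main obstacle I expect is the middle step: correctly pinning down the inducing character \(\psi\) of \(\overline{\chi} \circ \phi\). The point that makes it work is that \(\overline{\chi} \circ \phi\) is automatically an irreducible \(p\)-dimensional character, so its restriction to \(A\) must be a single \(C_{p}\)-orbit sum; matching this against the directly computed restriction \(\sum_{i} \chi \circ \alpha(y)^{i} \circ \phi'\) then determines the orbit. I would take care to check that these \(p\) summands are genuinely distinct (so that they really do form an orbit rather than a sum with repetitions) and to keep the convention for the \(C_{p}\)-action on \(\dual{A}\) consistent with the form of \cref{lem:pCharacterACp}, since the orbit could otherwise be described by the wrong powers of \(\alpha(y)\).
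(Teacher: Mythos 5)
Your proof is correct, but it takes a genuinely different route from the paper's on the harder (converse) direction, and the comparison is instructive. The paper reduces fixedness of \(\overline{\chi}\) to the set equality \(\{\chi \circ \alpha(y)^{i}\} = \{\chi \circ \alpha(y)^{i} \circ \phi'\}\) and then must prove, for the converse, that a single coincidence \(\chi \circ \phi' = \chi \circ \alpha(y)^{i}\) forces the whole set equality; it does this by an explicit computation with the group structure, writing \(\phi(y) = a y^{k}\) and deriving the covariance relation \(\alpha(y)^{jk} \circ \phi' = \phi' \circ \alpha(y)^{j}\), which shows the right-hand set is a full orbit. You bypass that computation entirely: by part 1 of \cref{theo:RepresentationsSemidirectProduct} (with \cref{lem:DimensionCharactersACp}), \(\overline{\chi} \circ \phi\) is itself an induced character \(\overline{\psi}\), so its restriction to \(A\) is automatically an orbit sum; matching this against the directly computed restriction \(\sum_{i} \chi \circ \alpha(y)^{i} \circ \phi'\) and using linear independence identifies \(\overline{\chi} \circ \phi = \overline{\chi \circ \phi'}\), after which both directions of the equivalence fall out simultaneously from the injectivity statement in part 2 of \cref{theo:RepresentationsSemidirectProduct}. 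Note that your structural appeal to part 1 is doing exactly the work of the paper's commutation relation—it is what guarantees \(\{\chi \circ \alpha(y)^{i} \circ \phi'\}\) is closed under the \(C_{p}\)-action, a fact that is not obvious since \(\phi'\) and \(\alpha(y)\) need not commute. Your version is shorter and more symmetric, using only \(\phi(A) = A\) rather than the shape of \(\phi(y)\); the paper's version is more self-contained at this point in the text and produces the relation \(\alpha(y)^{k} \circ \phi' = \phi' \circ \alpha(y)\) explicitly, which makes the mechanism visible rather than hiding it in the classification theorem. Your concerns at the end (distinctness of the \(p\) summands, consistency of the action convention) are exactly the right ones, and both are handled correctly: distinctness follows from triviality of the stabiliser of \(\chi\) plus bijectivity of precomposition with \(\phi'\), just as in the paper.
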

\begin{proof}
	As \(\phi(A) = A\) and \(\phi\) is bijective, also \(\phi(G \setminus A) = G \setminus A\), implying that \(\overline{\chi}(\phi(g)) = 0\) if \(g \notin A\). Thus, \(\overline{\chi} \circ \phi = \overline{\chi}\) if and only if
	\[
		\sum\limits_{i = 0}^{p - 1} \chi(\alpha(y)^{i}(g)) = \sum\limits_{i = 0}^{p - 1} \chi(\alpha(y)^{i}(\phi'(g)))
	\]
	for all \(g \in A\). We can rewrite this as
	\[
		\sum\limits_{i = 0}^{p - 1} (\chi \circ \alpha(y)^{i})(g) = \sum\limits_{i = 0}^{p - 1} (\chi \circ \alpha(y)^{i} \circ \phi')(g),
	\]
	and as it has to hold for all \(g \in A\), we get the following equality of class functions:
	\begin{equation}	\label{eq:equalityOfCharacterSums}
		\sum\limits_{i = 0}^{p - 1} \chi \circ \alpha(y)^{i} = \sum\limits_{i = 0}^{p - 1} \chi \circ \alpha(y)^{i} \circ \phi'.
	\end{equation}
	As \(\alpha(y)\) and \(\phi'\) are automorphisms, each term in either sum is an irreducible character of \(A\). Moreover, no two terms in the same sum are equal. Indeed, if \(\chi \circ \alpha(y)^{i} = \chi \circ \alpha(y)^{j}\) for \(0 \leq i, j \leq p - 1\), then \(\chi \circ \alpha(y)^{i - j} = \chi\). This implies that \(y^{i - j}\) lies in the stabiliser of \(\chi\) under the action of \(C_{p}\). As this stabiliser is trivial, this means that \(i \equiv j \bmod p\), and thus \(i = j\). Since \(\phi'\) is an automorphism, we can apply the same argument to the terms \(\chi \circ \alpha(y)^{i} \circ \phi'\).
	
	Thus, each side in \eqref{eq:equalityOfCharacterSums} is a sum of distinct irreducible characters of \(A\). As the irreducible characters on \(A\) form a basis of the vector space of class functions on \(A\), equality in \eqref{eq:equalityOfCharacterSums} holds if and only if
	\begin{equation}	\label{eq:equalityOfCharacterSets}
		\{\chi \circ \alpha(y)^{i} \mid 0 \leq i \leq p - 1 \} = \{\chi \circ \alpha(y)^{i} \circ \phi' \mid 0 \leq i \leq p - 1 \}.
	\end{equation}
	This equality of sets immediately implies that \(\chi \circ \phi' = \chi \circ \alpha(y)^{i}\) for some \(i \in \{0, \ldots, p - 1\}\). Conversely, suppose that \(\chi \circ \phi' = \chi \circ \alpha(y)^{i}\) for some \(i \in \{0, \ldots, p - 1\}\). Let \(g \in A\). As \(\phi\) is a group homomorphism, it preserves the relation \(g^{y} = \alpha(y)(g)\). Write \(\phi(y) = a y^{k}\) for some \(a \in A, k \in \Z\). Then
	\[
		\alpha(y)^{k}(\phi(g)) = \phi(g)^{y^{k}} = \phi(g)^{ay^{k}} = \phi(g)^{\phi(y)} = \phi(g^{y}) = \phi(\alpha(y)(g)).
	\]
	As this holds for all \(g \in A\), we get the equality \(\alpha(y)^{k} \circ \phi' = \phi' \circ \alpha(y)\) of automorphisms on \(A\). Note that \(k \not\equiv 0 \bmod p\), as \(\phi\) is an automorphism. From this equality we find that
	\[
		\alpha(y)^{jk} \circ \phi' = \phi' \circ \alpha(y)^{j}
	\]
	for all \(j \in \Z\).
	
	Using this, we proceed in proving equality in \eqref{eq:equalityOfCharacterSets}. Let \(l\) be the multiplicative inverse of \(k\) modulo \(p\). For \(j \in \{0, \ldots, p - 1\}\), we then have
	\[
		\chi \circ \alpha(y)^{j} \circ \phi' = \chi \circ \phi' \circ \alpha(y)^{jl} = \chi \circ \alpha(y)^{i + jl}.
	\]
	As \(l\) is invertible modulo \(p\), the set \(\{i + jl \mid j \in \{0, \ldots, p - 1\}\}\) forms a complete set of representatives modulo \(p\). Consequently, we find that equality in \eqref{eq:equalityOfCharacterSets} holds.
\end{proof}

With this characterisation we can determine an explicit formula for \(\ch_{p, \phi}(G)\) given an automorphism \(\phi \in \Aut(G)\).

\begin{prop}	\label{prop:generalExpressionchpphi}
	Let \(\phi \in \Aut(G)\) be such that \(\phi(A) = A\) and denote by \(\phi'\) the induced automorphism on \(A\). Then
	\[
		\ch_{p, \phi}(G) = \frac{1}{p} \left(\sum_{i = 0}^{p - 1} \size{\Fix(\phi' \circ \alpha(y)^{i})}\right) - \size{\Fix(\dual{\phi'}) \cap \Fix(\dual{\alpha(y)})}
	\]
\end{prop}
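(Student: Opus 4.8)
The plan is to translate the question into a counting problem about the orbits of \(\dual{A}\) under \(C_{p}\) and then evaluate it by a double count. Write \(S := \dual{\alpha(y)}\) and \(T := \dual{\phi'}\) for the maps on \(\dual{A}\) sending \(\chi\) to \(\chi \circ \alpha(y)\) and \(\chi \circ \phi'\), respectively. By \cref{lem:DimensionCharactersACp} and \cref{theo:RepresentationsSemidirectProduct}, the characters in \(\Irr_{p}(G)\) correspond bijectively to the free \(C_{p}\)-orbits in \(\dual{A}\) (those of size \(p\), equivalently those not fixed by \(S\)), each such orbit consisting of exactly \(p\) characters. By \cref{lem:FixedCharactersDimensionp}, the character \(\overline{\chi}\) attached to a free orbit is fixed by \(\phi\) precisely when \(T\chi\) lies in the \(S\)-orbit of \(\chi\); since \(\overline{\chi}\) does not depend on the chosen orbit representative (part 2 of \cref{theo:RepresentationsSemidirectProduct}), this condition holds for all \(p\) members of the orbit simultaneously. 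Hence, setting \(N\) to be the number of \emph{free} characters \(\chi\) with \(T\chi\) in the \(S\)-orbit of \(\chi\), distinct fixed orbits contribute disjoint blocks of \(p\) such characters, so \(\ch_{p, \phi}(G) = N/p\).

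Next I would encode the orbit condition set-theoretically. For \(i \in \{0, \ldots, p-1\}\) put \(F_{i} := \{\chi \in \dual{A} \mid \chi \circ \phi' = \chi \circ \alpha(y)^{i}\}\). Composing the defining equality on the right with \(\alpha(y)^{-i}\) shows \(F_{i} = \Fix(\dual{\phi' \circ \alpha(y)^{-i}})\), so \cref{cor:nbFixedPointsDualMap} gives \(\size{F_{i}} = \size{\Fix(\phi' \circ \alpha(y)^{-i})}\); this is the step that lets me pass from dual maps on \(\dual{A}\) to honest automorphisms of \(A\). The union \(\bigcup_{i} F_{i}\) is then exactly the set of all \(\chi\), free or not, satisfying the orbit condition.

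The heart of the argument is a double count of \(\sum_{i = 0}^{p-1} \size{F_{i}}\), separating free from non-free characters. If \(\chi\) is free, then \(T\chi = S^{i}\chi\) determines \(i\) uniquely, so \(\chi\) lies in exactly one \(F_{i}\), and the total contribution of free characters is \(N\). If \(\chi\) is non-free, i.e.\ \(S\chi = \chi\) (equivalently \(\chi \in \Fix(\dual{\alpha(y)})\)), then \(\chi \in F_{i}\) forces \(T\chi = \chi\) independently of \(i\); hence such a \(\chi\) lies in all \(p\) sets \(F_{i}\) when \(\chi \in \Fix(\dual{\phi'})\) and in none otherwise. Non-free characters therefore contribute \(p \cdot \size{\Fix(\dual{\phi'}) \cap \Fix(\dual{\alpha(y)})}\), giving
\[
	\sum_{i = 0}^{p-1} \size{\Fix(\phi' \circ \alpha(y)^{-i})} = N + p \cdot \size{\Fix(\dual{\phi'}) \cap \Fix(\dual{\alpha(y)})}.
\]
Solving for \(N\), dividing by \(p\), and reindexing the sum via \(\alpha(y)^{-i} = \alpha(y)^{p-i}\) (as \(\alpha(y)^{p} = \Id\)) yields precisely the claimed formula.

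I expect the main obstacle to be the bookkeeping in the reduction \(\ch_{p, \phi}(G) = N/p\): one must verify that the condition in \cref{lem:FixedCharactersDimensionp} is genuinely a property of the orbit rather than of the representative, and that distinct fixed orbits contribute disjoint blocks of \(p\) characters, both of which rest on \cref{theo:RepresentationsSemidirectProduct}. Once the orbit picture is in place, the identification \(F_{i} = \Fix(\dual{\phi' \circ \alpha(y)^{-i}})\) and the split of the double count into the free and non-free cases are routine.
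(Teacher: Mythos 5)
Your proof is correct and takes essentially the same route as the paper: the same sets \(F_{\phi, i}\), the same use of \cref{lem:FixedCharactersDimensionp}, \cref{theo:RepresentationsSemidirectProduct} and \cref{cor:nbFixedPointsDualMap}, and the same final re-indexing of \(-i\) to \(i\). The only difference is bookkeeping: you evaluate \(\sum_{i} \size{F_{\phi, i}}\) directly by splitting characters into free and non-free ones, whereas the paper computes \(\size{\bigcup_{i} F_{\phi, i}}\) by inclusion--exclusion, subtracts the characters fixed by \(\dual{\alpha(y)}\), and then divides by \(p\) --- arithmetically the two counts are identical.
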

\begin{proof}
	Let \(\phi \in \Aut(G)\) with \(\phi(A) = A\) be fixed. We use \cref{lem:FixedCharactersDimensionp} to determine \(\ch_{p, \phi}\). Define for \(i \in \{0, \ldots, p - 1\}\) the set
	\[
		F_{\phi, i} = \{\chi \in \dual{A} \mid \chi \circ \phi' = \chi \circ \alpha(y)^{i}\}.
	\]
	By the aforementioned lemma, if \(\chi \in \dual{A}\) induces a \(p\)-dimensional character of \(G\), then this induced character is fixed by \(\phi\) if and only if \(\chi\) lies in one of the \(F_{\phi, i}\). However, not all characters in \(F_{\phi, i}\) will induce a \(p\)-dimensional one on \(G\). Therefore, we have to determine which do.
	
	We start by computing the size of the union of all \(F_{\phi, i}\)'s using the inclusion-exclusion principle. For \(i \ne j \in \{0, \ldots, p - 1\}\), consider \(F_{\phi, i} \cap F_{\phi, j}\). A character \(\chi \in \dual{A}\) lies in this intersection if and only if
	\[
		\chi \circ \phi' = \chi \circ \alpha(y)^{i} = \chi \circ \alpha(y)^{j}.
	\]
	The second equality is equivalent with \(\chi \in \Fix(\dual{\alpha(y)^{j - i}})\). As \(j - i \not \equiv 0 \bmod p\) and \(\dual{\alpha(y)}\) has order \(p\), it holds that \(\Fix(\dual{\alpha(y)^{j - i}}) = \Fix(\dual{\alpha(y)})\). From this it also follows that \(\chi \circ \phi' = \chi\), meaning that \(\chi \in F_{\phi, 0}\). Therefore, \(\chi \in \Fix(\dual{\alpha(y)}) \cap F_{\phi, 0}\).  Clearly, the converse holds as well, \ie \(\chi \in \Fix(\dual{\alpha(y)}) \cap F_{\phi, 0}\) implies \(\chi \circ \phi' = \chi \circ \alpha(y)^{i} = \chi \circ \alpha(y)^{j}\). We conclude that
	\begin{equation}	\label{eq:intersectionFphiiFphij}
		F_{\phi, i} \cap F_{\phi, j} = F_{\phi, 0} \cap \Fix(\dual{\alpha(y)}).
	\end{equation}
	Consequently, any intersection of at least two \(F_{\phi, i}\)'s with distinct indices equals \(F_{\phi, 0} \cap \Fix(\dual{\alpha(y)})\). Thus, the inclusion-exclusion principle yields
	\begin{align*}
		\size{\bigcup_{i = 0}^{p - 1} F_{\phi, i}}	&= \sum_{i = 0}^{p - 1} \size{F_{\phi, i}} + \sum_{i = 2}^{p} (-1)^{i + 1} \binom{p}{i} \size{F_{\phi, 0} \cap \Fix(\dual{\alpha(y)})}	\\
										&= \sum_{i = 0}^{p - 1} \size{F_{\phi, i}} - \size{F_{\phi, 0} \cap \Fix(\dual{\alpha(y)})} \sum_{i = 2}^{p} (-1)^{i} \binom{p}{i}	\\
										&= \sum_{i = 0}^{p - 1} \size{F_{\phi, i}} - \size{F_{\phi, 0} \cap \Fix(\dual{\alpha(y)})} \left(-\binom{p}{0} + \binom{p}{1}\right)	\\
										&= \sum_{i = 0}^{p - 1} \size{F_{\phi, i}} - (p - 1)\size{F_{\phi, 0} \cap \Fix(\dual{\alpha(y)})}.
	\end{align*}
We now determine which characters in the union induce a \(p\)-dimensional one on \(G\). Recall that \(\chi \in \dual{A}\) induces \(\overline{\chi} \in \Irr_{p}(G)\) if and only if its stabiliser under the action of \(C_{p}\) on \(\dual{A}\) is trivial. As \((y^{i} \cdot \chi)(a) = \chi(a^{y^{i}}) = \chi(\alpha(y)^{i}(a))\) for all \(a \in A\), the stabiliser of \(\chi\) is trivial if and only if \(\chi \circ \alpha(y)^{i} \ne \chi\) for at least one \(i \in \{1, \ldots, p - 1\}\). As \(\alpha(y)\) has prime order, this is equivalent with \(\chi \circ \alpha(y) \ne \chi\), \ie \(\chi \not \in \Fix(\dual{\alpha(y)})\).

Therefore, we must subtract \(\size{\Fix(\dual{\alpha(y)}) \cap \bigcup_{i = 0}^{p - 1} F_{\phi, i}}\). By a similar argument as for \eqref{eq:intersectionFphiiFphij} we find that
\[
	F_{\phi, i} \cap \Fix(\dual{\alpha(y)}) = F_{\phi, 0} \cap \Fix(\dual{\alpha(y)})
\]
for all \(i \in \{0, \ldots, p - 1\}\). Hence, we have to subtract \(\size{F_{\phi, 0} \cap \Fix(\dual{\alpha(y)})}\) and obtain
\[
	\sum_{i = 0}^{p - 1} \size{F_{\phi, i}} - p\size{F_{\phi, 0} \cap \Fix(\dual{\alpha(y)})}.
\]
We have to take one last thing into account. If \(\chi \in \dual{A}\) induces a \(p\)-dimensional character on \(G\) fixed by \(\phi\), then so does \(\chi \circ \alpha(y)^{i}\) for each \(i \in \{0, \ldots, p - 1\}\). However, all these induced characters are equal. Conversely, by the second item of \cref{theo:RepresentationsSemidirectProduct}, this is the only way in which two characters of \(A\) can induce the same \(p\)-dimensional character of \(G\). In other words, each \(p\)-dimensional character fixed by \(\phi\) is counted \(p\) times in the above expression. We therefore have to divide by \(p\) to finally obtain
\[
	\ch_{p, \phi} = \frac{1}{p} \left(\sum_{i = 0}^{p - 1} \size{F_{\phi, i}}\right) - \size{F_{\phi, 0} \cap \Fix(\dual{\alpha(y)})}.
\]
To end the proof, note that
\begin{align*}
	\size{F_{\phi, i}}	&= \size{\{\chi \in \dual{A} \mid \chi \circ \phi' = \chi \circ \alpha(y)^{i}\}}	\\
				&= \size{\Fix(\dual{\phi' \circ \alpha(y)^{-i}})}	\\
				&= \size{\Fix(\phi' \circ \alpha(y)^{-i})}
\end{align*}
by \cref{cor:nbFixedPointsDualMap} and that \(F_{\phi, 0} = \Fix(\dual{\phi'})\), thus
\[
	\ch_{p, \phi}(G) = \frac{1}{p} \left(\sum_{i = 0}^{p - 1} \size{\Fix(\phi' \circ \alpha(y)^{-i})}\right) - \size{\Fix(\dual{\phi'}) \cap \Fix(\dual{\alpha(y)})}.
\]
Switching from \(-i\) to \(i\) does not change the first summation, hence the result follows.
\end{proof}

The preceding results will yield necessary conditions on Reidemeister numbers of automorphisms of split metacyclic groups. The main tool to prove that these conditions are also sufficient is a number theoretic result concerning the existence of an integer satisfying certain divisor properties. The result resembles a generalisation of the \(\supseteq\)-inclusion from \cref{prop:SpecRCyclicGroups}.

\begin{theorem}	\label{theo:systemOfGCDEquations}
	Let \(l, n, m\) be non-negative integers with \(n, m \geq 1\) and \(l \leq m\) and let \(p\) be a prime not dividing \(n\). Let \(a\) be an integer such that \(\gcd(a, n) = \gcd(a - 1, n) = 1\) and such that \(\bar{a} \in \unitsb{\ZnZ}\) has order \(p\). Suppose that \(d_{0}, \ldots, d_{p - 1}\) are pairwise coprime divisors of \(n\), with, if \(p = 2\) and \(n \equiv 0 \bmod 3\), the extra condition that \(d_{0} d_{1} \equiv 0 \bmod 3\). Then there exists a \(\gamma \in \Z\) such that \(\gcd(\gamma, n) = 1\), such that \(\gcd(\gamma - a^{i}, n) = d_{i}\) for all \(i \in \{0, \ldots, p - 1\}\) and such that \(\gamma - 1 \equiv p^{l} \bmod p^{m}\).
	
\end{theorem}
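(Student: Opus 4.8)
The plan is to construct \(\gamma\) with the Chinese Remainder Theorem, prescribing it modulo each prime power dividing \(n\) and treating each prime separately, just as in the \(\supseteq\)-inclusion of \cref{cor:SpecRCyclicGroups}. Write \(n = \prod_{j = 1}^{r} q_{j}^{e_{j}}\) with the \(q_{j}\) distinct primes. The governing observation is that the system \(\gcd(\gamma - a^{i}, n) = d_{i}\) for all \(i\) is equivalent, by the Chinese Remainder Theorem, to a local condition at each prime power: if \(q^{f_{i}}\) denotes the exact power of a fixed prime \(q = q_{j}\) dividing \(d_{i}\), then one needs \(\gcd(\gamma - a^{i}, q^{e_{j}}) = q^{f_{i}}\) for every \(i\). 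Because the \(d_{i}\) are pairwise coprime, for each fixed \(q\) at most one of \(f_{0}, \ldots, f_{p - 1}\) is nonzero. The coprimality \(\gcd(\gamma, n) = 1\) will come for free, since every prescription below makes \(\gamma\) a unit modulo \(q\).

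Before building \(\gamma\), I would record the arithmetic of \(a\). Since \(\bar{a}\) has order \(p\) in \(\unitsb{\ZnZ}\), we have \(a^{p} \equiv 1\), so the order of \(a\) modulo each prime \(q_{j}\) divides \(p\); the hypothesis \(\gcd(a - 1, n) = 1\) forces \(a \not\equiv 1 \bmod q_{j}\), so this order is exactly \(p\). Consequently \(p \mid q_{j} - 1\), whence \(q_{j} \geq p + 1\), and the powers \(a^{0}, a^{1}, \ldots, a^{p - 1}\) are \(p\) distinct units modulo \(q_{j}\).

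Next, fixing a prime \(q = q_{j}\) with \(q^{e} \,\|\, n\), I would prescribe \(\gamma \bmod q^{e}\) in two cases. If some (necessarily unique) \(d_{i_{0}}\) is divisible by \(q\), say \(q^{f} \,\|\, d_{i_{0}}\) with \(f \geq 1\), I set \(\gamma \equiv a^{i_{0}} + q^{f} \pmod{q^{e}}\) when \(f < e\) and \(\gamma \equiv a^{i_{0}} \pmod{q^{e}}\) when \(f = e\); in either case \(\gcd(\gamma - a^{i_{0}}, q^{e}) = q^{f}\), while \(\gamma \equiv a^{i_{0}} \pmod{q}\) together with the distinctness of the \(a^{i}\) forces \(q \nmid \gamma - a^{i}\), hence \(\gcd(\gamma - a^{i}, q^{e}) = 1 = q^{f_{i}}\), for all \(i \neq i_{0}\). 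If instead \(q\) divides none of the \(d_{i}\), I must choose the residue of \(\gamma\) modulo \(q\) outside the forbidden set \(\{0, a^{0}, \ldots, a^{p - 1}\}\), which has exactly \(p + 1\) elements; such a residue exists precisely when \(q \geq p + 2\). Assembling these local choices by the Chinese Remainder Theorem yields \(\gamma \bmod n\), and reading off the valuations at each prime confirms \(\gcd(\gamma - a^{i}, n) = d_{i}\) for all \(i\) and \(\gcd(\gamma, n) = 1\).

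The main obstacle is exactly the residue count in the second case: the bound \(q \geq p + 1\) leaves no admissible residue when \(q = p + 1\). But \(q = p + 1\) with both \(p\) and \(q\) prime forces \(p = 2\) and \(q = 3\), and then \(\{0, a^{0}, a^{1}\} = \{0, 1, 2\} = \ZmodZ{3}\) exhausts all residues. This is precisely the configuration excluded by the extra hypothesis: when \(p = 2\) and \(3 \mid n\), the condition \(d_{0} d_{1} \equiv 0 \bmod 3\) guarantees that \(3\) divides one of \(d_{0}, d_{1}\), placing the prime \(3\) into the \emph{first} case rather than the second. For every other prime \(q \mid n\) one has \(q \equiv 1 \bmod p\) with \(q \neq p + 1\), hence \(q \geq 2p + 1 \geq p + 2\), so the second case always succeeds. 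With this exceptional prime disposed of, the construction goes through unobstructed.
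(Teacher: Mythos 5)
Your proof is correct, and it is worth comparing to the paper's. Both arguments share the same skeleton: localize at each prime power \(q^{e} \,\|\, n\) via the Chinese Remainder Theorem, and at a prime \(q\) dividing the (unique) \(d_{i_{0}}\) impose \(\gamma \equiv a^{i_{0}} + q^{f} \bmod q^{e}\), exactly as the paper's congruence \eqref{eq:congruenceModPj}. Where you genuinely diverge is at the leftover primes, those dividing no \(d_{i}\). The paper prescribes an \emph{explicit} residue there --- \(\gamma \equiv -1 \bmod q\) for \(p\) odd, \(\gamma \equiv 3 \bmod q\) for \(p = 2\) --- and then verifies a posteriori that this residue avoids the forbidden set \(\{0, a^{0}, \ldots, a^{p-1}\}\): for odd \(p\) it argues that \(a^{i} \equiv -1\) is impossible for an element of odd prime order, and for \(p = 2\) that \(a^{i} \equiv \pm 1\) while \(3 \not\equiv \pm 1\) modulo an odd prime different from \(3\); this forces the proof to split into two near-duplicate cases. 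You instead extract from the hypotheses that every prime \(q \mid n\) satisfies \(q \equiv 1 \bmod p\) (since \(a\) has exact order \(p\) modulo \(q\)), so that \(q \geq 2p + 1 \geq p + 2\) unless \(q = p + 1\), which forces \(p = 2\), \(q = 3\); a safe residue then exists by pure counting, since the \(p + 1\) forbidden residues cannot exhaust \(\Z / q \Z\). Your route buys uniformity in \(p\) (no case split) and makes the role of the extra hypothesis transparent: \(q = 3\), \(p = 2\) is precisely the one prime where the pigeonhole fails, and the condition \(d_{0}d_{1} \equiv 0 \bmod 3\) pushes that prime into the other, unproblematic case. The paper's route buys an explicit \(\gamma\) at every prime at the cost of two clever, separately verified choices. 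Both uses of the exceptional hypothesis amount to the same thing --- keeping the prime \(3\) out of the leftover set --- so the proofs agree on where the real obstruction lies.
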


\begin{proof}
Note that \(n\) must be odd, since otherwise \(\gcd(a(a - 1), n) \geq 2\). First, we consider the case where \(p\) is odd. Let \(n = \prod_{i = 1}^{r} p_{i}^{e_{i}}\) be the prime factorisation of \(n\), where all \(p_{i}\) are distinct primes. For \(0 \leq i \leq p - 1\), let \(d_{i} = \prod_{j = 1}^{r}p_{j}^{e_{i, j}}\) be the prime factorisation of \(d_{i}\). Consider, for all \(1 \leq j \leq r\) and all \(0 \leq i \leq p - 1\) such that \(e_{i, j} \geq 1\), the congruence
\begin{equation}	\label{eq:congruenceModPj}
	x - a^{i} \equiv p_{j}^{e_{i, j}} \bmod p_{j}^{e_{j}}.
\end{equation}
Since all the \(d_{i}\)'s are coprime, we have at most one congruence modulo \(p_{j}^{e_{j}}\) for each \(1 \leq j \leq r\). Let \(q\) be the product of all \(p_{j}^{e_{j}}\) not occurring in one of the congruences above, and consider (if \(q \ne 1\)), the congruence
\begin{equation}	\label{eq:congruenceModq}
	x \equiv -1 \bmod q.
\end{equation}

The system we then consider consists of all equations from \eqref{eq:congruenceModPj} and \eqref{eq:congruenceModq}, together with \(x - 1 \equiv p^{l} \bmod p^{m}\). Since each \(p_{j}^{e_{j}}\) occurs at most once and \(q\) is the product of all \(p_{j}^{e_{j}}\)'s that did not yet occur, the conditions for the Chinese Remainder Theorem are met and we find a \(\gamma \in \Z\) satisfying all these congruences. Note that the product of all moduli is equal to \(np^{m}\).

We now prove that \(\gamma\) satisfies the desired conditions.

First, let \(0 \leq i \leq p - 1\). We prove that \(\gcd(\gamma - a^{i}, n) = d_{i}\). Let \(j \in \{1, \ldots, r\}\). If \(e_{i, j} \geq 1\), it immediately follows from \eqref{eq:congruenceModPj} that \(\gcd(\gamma - a^{i}, p_{j}^{e_{j}})\) equals \(p_{j}^{e_{i, j}}\). So suppose that \(e_{i, j} = 0\) and that \(p_{j}\) divides \(\gamma - a^{i}\). If \(e_{k, j} \geq 1\) for some \(0 \leq k \leq p - 1\) (which is then necessarily distinct from \(i\)), then
\[
	\gamma - a^{i} \equiv \gamma - a^{k} \bmod p_{j}.
\]
This implies that \(a^{i - k} \equiv 1 \bmod p_{j}\). Since \(i \not \equiv k \bmod p\), also \(a \equiv 1 \bmod p_{j}\), which contradicts \(\gcd(a - 1, n) = 1\). On the other hand, if \(e_{k, j} = 0\) for all \(0 \leq k \leq p - 1\), then \(p_{j}\) divides \(q\) and we find
\[
	0 \equiv \gamma - a^{i} \equiv -1 - a^{i} \bmod p_{j}.
\]
As \(p\) and \(p_{j}\) are both odd, it is impossible for \(a^{i}\) to satisfy \(a^{i} \equiv -1 \bmod p_{j}\). Therefore, we have arrived yet again at a contradiction. We therefore conclude that \(\gcd(\gamma - a^{i}, n) = d_{i}\). 

Next, we prove that \(\gcd(\gamma, n) = 1\). Suppose that there is a \(j \in \{1, \ldots, r\}\) such that \(p_{j}\) divides \(\gamma\). If \(p_{j}\) divides \(q\), then we find using \eqref{eq:congruenceModq} that
\[
	0 \equiv \gamma \equiv - 1 \bmod p_{j},
\]
a contradiction. Thus, there is some \(0 \leq i \leq p - 1\) such that \(e_{i, j} \geq 1\). Then using \eqref{eq:congruenceModPj} we find that
\[
	0 \equiv \gamma - a^{i} \equiv - a^{i} \bmod p_{j},
\]
which is again a contradiction, as \(\gcd(a^{i}, n) = 1\). We conclude that \(\gcd(\gamma, n) = 1\).

Finally, we have that \(\gamma - 1 \equiv p^{l} \bmod p^{m}\) by construction. This finishes the case where \(p\) is odd.\newline

For the case where \(p = 2\), we consider almost the exact same system of congruences, except we replace \eqref{eq:congruenceModq} with
\begin{equation}	\label{eq:congruenceModqCasep2}
	x \equiv 3 \bmod q.
\end{equation}

	The proofs for the conditions on \(\gamma\) are almost identical as for the case \(p\) odd, so we omit them here.\qedhere

\end{proof}

\section{Split metacyclic groups of the form \(C_{n} \rtimes C_{p}\)}
We now discuss some structural properties of the groups we will investigate.
\begin{defin}
	Let \(G\) be a group. We call \(G\) \emph{split metacyclic} if there are cyclic groups \(C_{n}\) and \(C_{m}\) such that \(G \cong C_{n} \rtimes C_{m}\).
\end{defin}
We will consider split metacyclic groups of the form \((C_{n} \times C_{p^{m}}) \rtimes C_{p}\) where \(p\) is prime, 
\(m\) and \(n\) are non-negative integers and \(n\) is coprime with \(p\). Due to this last condition, \(C_{n} \times C_{p^{m}} \cong C_{np^{m}}\) is indeed cyclic.

The following is essentially a special case of \cite[Lemma~3.2]{GolasinskiGoncalves09}, but we include a full proof here due to the fact that our notation differs quite from theirs.

\begin{prop}	\label{prop:decompositionAsDirectProductSplitMetacyclic}
	Let \(n, m\) and \(p\) be natural numbers with \(n \geq 2\), \(m \geq 0\), \(p\) prime and \(\gcd(n, p) = 1\). Let \(G := (C_{n} \times C_{p^{m}})\rtimes_{\alpha} C_{p}\) be a semi-direct product with morphism \(\alpha: C_{p} \to \Aut(C_{n} \times C_{p^{m}}) \cong \unitsb{\ZnZ} \times \unitsb{\ZmodZ{p^{m}}}\).
	Put
	\[
		H := \{g \in C_{n} \mid \forall x \in C_{p}: \alpha(x)(g) = g\}.
	\]
	and let \(n = \prod_{i = 1}^{r} p_{i}^{e_{i}}\) be the prime factorisation of \(n\), with \(p_{i} \ne p_{j}\) if \(i \ne j\). Then 
	\begin{enumerate}[1)]
		\item \(H \lhd G\);
		\item there is a subset \(I \subseteq \{1, \ldots, r\}\) such that
			\[
				H = \prod_{i \in I} C_{p_{i}^{e_{i}}};
			\]
		\item \(G \cong H \times ((N \times C_{p^{m}}) \rtimes_{\tilde{\alpha}} C_{p})\) where \(\tilde{\alpha}: C_{p} \to \Aut(N)\) is the restriction of \(\alpha\) to \(\Aut(N)\) and
		\[
			N := \prod_{i \notin I} C_{p_{i}^{e_{i}}}.
		\]
	\end{enumerate}
\end{prop}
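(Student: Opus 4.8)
The plan is to exploit the coprime decomposition of $C_n$ into its Sylow subgroups and to track the action $\alpha$ one factor at a time. Since $\gcd(n,p)=1$ we have $\gcd(n,p^m)=1$, so \cref{lem:automorphismGroupDirectProduct} gives $\Aut(C_n \times C_{p^m}) \cong \Aut(C_n) \times \Aut(C_{p^m})$; in particular $C_n$ and $C_{p^m}$ are $\alpha$-invariant, and the induced action on $C_n \cong \prod_{i=1}^{r} C_{p_i^{e_i}}$ respects each (characteristic) Sylow factor, again by \cref{lem:automorphismGroupDirectProduct}. Let $y$ generate $C_p$ and write the restriction of $\alpha(y)$ to $C_{p_i^{e_i}}$ as $\beta_i: x \mapsto x^{a_i}$, and set $I := \{i : \beta_i = \Id\}$. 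Because the action is componentwise, an element $g = (g_i)_i$ lies in $H$ if and only if each $g_i$ is fixed, so I would first determine the fixed points in each factor.

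For part 2), the factors with $i \in I$ contribute all of $C_{p_i^{e_i}}$. For $i \notin I$ the automorphism $\beta_i$ is nontrivial and satisfies $\beta_i^{p} = \Id$, hence has order exactly $p$. The key claim is that such a $\beta_i$ has \emph{no} nontrivial fixed points: by \cref{lem:FixedPointsCyclicGroup} its fixed subgroup has order $\gcd(a_i - 1, p_i^{e_i})$, and I claim this equals $1$. Indeed, reduction modulo $p_i$ gives a homomorphism $(\Z/p_i^{e_i}\Z)^{\times} \to (\Z/p_i\Z)^{\times}$ whose kernel, the group of principal units, has order $p_i^{e_i - 1}$, coprime to $p$ since $p_i \ne p$; thus an element of order $p$ cannot lie in this kernel, i.e.\ $a_i \not\equiv 1 \bmod p_i$, forcing $\gcd(a_i - 1, p_i^{e_i}) = 1$. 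Hence $H = \prod_{i \in I} C_{p_i^{e_i}}$, and $N = \prod_{i \notin I} C_{p_i^{e_i}}$ is the complementary factor of $C_n$.

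For part 1), I would prove the stronger statement $H \le Z(G)$, which yields normality immediately. Every $h \in H$ commutes with the abelian normal subgroup $C_n \times C_{p^m}$, while for the generator $y$ the semidirect-product relation gives $y h y^{-1} = \alpha(y)(h) = h$ by the very definition of $H$. Since these two families of elements generate $G$, each $h \in H$ is central, so $H \lhd G$.

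For part 3), I would realise the claimed decomposition as an internal direct product. Under the identification $C_n \times C_{p^m} \cong H \times (N \times C_{p^m})$, set $\tilde K := \{((1_H, u), x) : u \in N \times C_{p^m},\ x \in C_p\}$; this is a subgroup of $G$ because $\alpha(x)$ preserves the $\alpha$-invariant subgroup $N \times C_{p^m}$ and acts trivially on $H$, and the evident map identifies $\tilde K$ with $(N \times C_{p^m}) \rtimes_{\tilde\alpha} C_p$, where $\tilde\alpha$ is the restriction of $\alpha$ to $N \times C_{p^m}$. Then $H \cap \tilde K = 1$ and $H \tilde K = G$ follow from the unique factorisation of elements, and $\tilde K \lhd G$ follows from $G = H\tilde K$ together with the centrality of $H$ (conjugation by $H$ is trivial, and $\tilde K$ is stable under conjugation by itself). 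Therefore $G = H \times \tilde K \cong H \times ((N \times C_{p^m}) \rtimes_{\tilde\alpha} C_p)$. The only genuinely delicate point is the fixed-point computation in part 2), where the hypothesis $\gcd(n,p)=1$ is essential via the observation that the principal units of $(\Z/p_i^{e_i}\Z)^{\times}$ form a $p_i$-group and hence contain no element of order $p$; the remaining parts are routine bookkeeping with the semidirect-product structure.
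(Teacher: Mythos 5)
Your proof is correct, and its skeleton matches the paper's: both use \cref{lem:automorphismGroupDirectProduct} to split the action over the coprime factors, prove 1) by showing \(H \leq Z(G)\), and deduce 3) by exhibiting \(G\) as the internal direct product of the central subgroup \(H\) with the complementary semidirect factor. The one place where you genuinely diverge is the key dichotomy in 2), namely that each restricted automorphism \(\beta_{i}\) is either trivial or fixed-point free. The paper proves this computationally: it writes \(f_{i} = \beta p_{i}^{l} + 1\), expands \((\beta p_{i}^{l} + 1)^{p} \equiv 1 \bmod p_{i}^{e_{i}}\) modulo \(p_{i}^{l + 1}\), and derives a contradiction unless \(\beta = 0\), concluding that either \(f_{i} \equiv 1 \bmod p_{i}^{e_{i}}\) or \(f_{i} \not\equiv 1 \bmod p_{i}\). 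You argue structurally: a nontrivial \(\beta_{i}\) has order exactly \(p\), the kernel of the reduction \(\unitsb{\ZmodZ{p_{i}^{e_{i}}}} \to \unitsb{\ZmodZ{p_{i}}}\) is a \(p_{i}\)-group of order \(p_{i}^{e_{i} - 1}\), and since \(p_{i} \ne p\) (this is where \(\gcd(n, p) = 1\) enters) Lagrange forbids an element of order \(p\) from lying in that kernel; hence \(a_{i} \not\equiv 1 \bmod p_{i}\), and \cref{lem:FixedPointsCyclicGroup} gives \(\gcd(a_{i} - 1, p_{i}^{e_{i}}) = 1\). Your route is shorter, more conceptual, and handles the trivial-action case uniformly (the paper dispatches it separately at the start); the paper's congruence computation is more elementary and self-contained, and the same expansion technique recurs later in the paper (\eg in \cref{lem:ActionCpCp^m}), so the author loses little by doing it by hand here. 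The remaining parts --- centrality of \(H\), and the internal direct product via \(H \cap \tilde{K} = 1\), \(G = H\tilde{K}\) --- are handled essentially identically in both proofs.
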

\begin{proof}
	If \(\alpha\) is the trivial map, then the result is clear. Therefore, suppose that \(\alpha\) is not the trivial map. It is clear that \(H \leq C_{n}\). Let \(x\), \(y\) and \(z\) be generators of \(C_{n}\), \(C_{p}\) and \(C_{p^{m}}\), respectively. Let \(h \in H\). Clearly, \(h^{x} = h\) and \(h^{z} = h\), and as \(h^{y} = \alpha(y)(h) = h\), we conclude that \(H \leq Z(G)\). Hence, \(H \lhd G\).
	
	Since \(\alpha\) is non-trivial, \(\alpha(y)\) has order \(p\). Also, by \cref{prop:automorphismGroupDirectProduct} the map \(\alpha\) splits as a map \(\alpha = (\alpha_{1}, \ldots, \alpha_{k}, \alpha')\) where \(\alpha_{i}: C_{p} \to \Aut(C_{p_{i}^{e_{i}}})\) and \(\alpha': C_{p} \to \Aut(C_{p^{m}})\). Fix \(i \in \{1, \ldots, r\}\). If \(x_{i}\) is a generator of \(C_{p_{i}^{e_{i}}}\), then \(\alpha_{i}(y)(x_{i}) = x_{i}^{f_{i}}\) for some \(f_{i} \in \Z\) satisfying \(f_{i}^{p} \equiv 1 \bmod p_{i}^{e_{i}}\). Suppose that \(f_{i} = \beta p_{i}^{l} + 1\) for some \(\beta, l \in \Z\) with \(\beta = 0\) or \(\gcd(p_{i}, \beta) = 1\), and \(0 \leq l < e_{i}\). Then
	\begin{equation*}
		1 \equiv (\beta p_{i}^{l} + 1)^{p} \bmod p_{i}^{e_{i}}.
	\end{equation*}
	Viewing this congruence modulo \(p_{i}^{l + 1}\), we find that \(1 \equiv p\beta p_{i}^{l} + 1 \bmod p_{i}^{l + 1}\). If \(\beta \ne 0\), then we reach a contradiction, since then \(\gcd(\beta p, p_{i}) = 1\). 
	
	From this, it follows that either \(f_{i} \equiv 1 \bmod p_{i}^{e_{i}}\) or \(f_{i} \not\equiv 1 \bmod p_{i}\), meaning that either \(\alpha_{i}(y)\) is the identity map or that \(\alpha_{i}(y)\) has no fixed points, by \cref{lem:FixedPointsCyclicGroup}. Consequently, putting \(I = \{i \in \{1, \ldots, r\} \mid f_{i} \equiv 1 \bmod p_{i}^{e_{i}}\}\), we find that
	\[
		H = \prod_{i \in I} C_{p_{i}^{e_{i}}}.
	\]
	
	Finally, put \(N = \prod_{i \not\in I} C_{p_{i}^{e_{i}}}\). By \cref{prop:automorphismGroupDirectProduct}, \(\alpha\) restricts to a map \(\tilde{\alpha}: C_{p} \to \Aut(N)\). By construction, \(H \cap (N \rtimes_{\tilde{\alpha}} C_{p}) = 1\). Thus, as \(G\) is generated by \(H\) and \((N \times C_{p^{m}}) \rtimes_{\tilde{\alpha}} C_{p}\) and as \(H\) is central, \(G\) is the internal direct product of \(H\) and \((N \times C_{p^{m}}) \rtimes_{\tilde{\alpha}} C_{p}\).
\end{proof}
\begin{cor}
	With the notations as before, we have
	\[
		\SpecR(G) = \SpecR(H) \cdot \SpecR((N \times C_{p^{m}}) \rtimes_{\tilde{\alpha}} C_{p}).
	\]
\end{cor}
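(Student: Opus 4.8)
The plan is to recognise this as a direct instance of \cref{cor:SpecDirectProductCoprime}. By the third item of \cref{prop:decompositionAsDirectProductSplitMetacyclic} we have an isomorphism \(G \cong H \times K\), where I abbreviate \(K := (N \times C_{p^{m}}) \rtimes_{\tilde{\alpha}} C_{p}\). Since \cref{cor:SpecDirectProductCoprime} computes the Reidemeister spectrum of a direct product of finite groups of pairwise coprime orders as the setwise product of the individual spectra, it suffices to verify that \(\gcd(\size{H}, \size{K}) = 1\), after which the two-factor case of that corollary applies verbatim.

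The verification is a comparison of prime factorisations. From the second item of \cref{prop:decompositionAsDirectProductSplitMetacyclic}, \(H = \prod_{i \in I} C_{p_{i}^{e_{i}}}\), so \(\size{H} = \prod_{i \in I} p_{i}^{e_{i}}\) and the primes dividing \(\size{H}\) are exactly those \(p_{i}\) with \(i \in I\). On the other hand, \(\size{K} = \size{N} \cdot p^{m} \cdot p = p^{m + 1} \prod_{i \notin I} p_{i}^{e_{i}}\), so every prime dividing \(\size{K}\) lies in \(\{p\} \cup \{p_{i} \mid i \notin I\}\). The index sets \(I\) and \(\{1, \ldots, r\} \setminus I\) are disjoint and the \(p_{i}\) are pairwise distinct primes, so no \(p_{i}\) with \(i \in I\) divides \(\size{K}\); moreover the hypothesis \(\gcd(n, p) = 1\) forces \(p \ne p_{i}\) for every \(i\). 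Hence \(\size{H}\) and \(\size{K}\) share no common prime factor.

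Applying \cref{cor:SpecDirectProductCoprime} to the product \(H \times K\) then yields
\[
	\SpecR(G) = \SpecR(H) \cdot \SpecR(K),
\]
which is the claim. I do not expect any genuine obstacle here: once the internal direct product decomposition of \cref{prop:decompositionAsDirectProductSplitMetacyclic} is in hand, the entire content reduces to the elementary coprimality check above, and the corollary does the rest.
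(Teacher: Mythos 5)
Your proposal is correct and matches the paper's argument exactly: the paper's proof is simply ``this follows immediately from \cref{cor:SpecDirectProductCoprime}'', which tacitly relies on the decomposition \(G \cong H \times K\) from \cref{prop:decompositionAsDirectProductSplitMetacyclic} and the coprimality of \(\size{H}\) and \(\size{K}\) that you verify explicitly. Your write-up just makes the implicit coprimality check visible; there is no gap and no divergence in method.
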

\begin{proof}
	This follows immediately from \cref{cor:SpecDirectProductCoprime}.
\end{proof}
We can use \cref{prop:SpecRCyclicGroups} to determine \(\SpecR(H)\) with \(H\) as defined above. As a consequence of this proposition, we can restrict our attention to split metacyclic groups \((C_{n} \times C_{p^{m}}) \rtimes_{\alpha} C_{p}\) for which the subgroup \(H\) is trivial. This means that \(C_{p}\) acts freely (by automorphisms) on \(C_{n}\). Moreover, this also implies that each prime factor of \(n\) is strictly greater than \(p\). Indeed, in the notation above, \(\alpha_{i}(y)\) is an element of order \(p\) in \(\Aut(C_{p_{i}^{e_{i}}})\). As \(\Aut(C_{p_{i}^{e_{i}}}) \cong \unitsb{\ZmodZ{p_{i}^{e_{i}}}}\), it must hold that \(p\) divides \(p_{i}^{e_{i} - 1}(p_{i} - 1)\). Since \(p\) does not divide \(n\), \(p\) must divide \(p_{i} - 1\), showing that \(p < p_{i}\).

Next, we discuss some of the possible actions of \(C_{p}\).

\begin{lemma}	\label{lem:fixedPointFreeActionsC2OnCyclicGroup}
	If \(C_{2}\) acts freely by automorphisms on a cyclic group \(C_{n}\) with \(n\) odd, then \(C_{2}\) acts by inversion.
\end{lemma}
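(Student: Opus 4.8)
The plan is to translate the free-action hypothesis into a divisibility condition on a single exponent and then finish by elementary number theory. First I would fix a generator \(x\) of \(C_{n}\) and record that an action of \(C_{2}\) by automorphisms is completely determined by the image \(\alpha \in \Aut(C_{n})\) of the non-trivial element of \(C_{2}\), which must satisfy \(\alpha^{2} = \Id\). Writing \(\alpha(x) = x^{a}\) for some \(a \in \Z\) with \(\gcd(a, n) = 1\), this relation reads \(a^{2} \equiv 1 \bmod n\).

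Next I would unpack the meaning of \emph{free}: a free action of \(C_{2}\) means that its non-trivial element acts without non-trivial fixed points, that is, \(\Fix(\alpha) = 1\). By \cref{lem:FixedPointsCyclicGroup} we have \(\size{\Fix(\alpha)} = \gcd(a - 1, n)\), so the hypothesis of a free action is equivalent to the single arithmetic condition \(\gcd(a - 1, n) = 1\). (Note that for \(n\) even this would force \(a - 1\) to be even while \(a\) is odd, which is incompatible; this is the structural reason the statement is phrased for \(n\) odd, although oddness will not be invoked explicitly in the computation below.)

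Finally I would combine the two facts. From \(a^{2} \equiv 1 \bmod n\) we obtain \(n \mid (a - 1)(a + 1)\). Since \(\gcd(a - 1, n) = 1\), the factor \(a - 1\) can contribute none of the prime factors of \(n\), so \(n \mid a + 1\), that is, \(a \equiv -1 \bmod n\). Hence \(\alpha(x) = x^{-1}\), which is exactly the claim that \(C_{2}\) acts by inversion.

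I do not anticipate a genuine obstacle here: the argument is short, and the only step requiring care is the correct identification of the free-action hypothesis with \(\gcd(a - 1, n) = 1\) through \cref{lem:FixedPointsCyclicGroup}. Once that reduction is in place, the coprimality deduction \(n \mid (a-1)(a+1)\) together with \(\gcd(a-1,n)=1 \Rightarrow n \mid a+1\) is immediate.
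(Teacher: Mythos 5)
Your proof is correct, but it takes a genuinely different route from the paper's. You convert the free-action hypothesis into the single arithmetic condition \(\gcd(a - 1, n) = 1\) via \cref{lem:FixedPointsCyclicGroup}, and then finish globally: \(a^{2} \equiv 1 \bmod n\) gives \(n \mid (a - 1)(a + 1)\), and coprimality of \(a - 1\) with \(n\) forces \(n \mid a + 1\). The paper instead argues prime by prime: for each odd prime power \(p^{e} \| n\) it invokes the cyclicity of \(\unitsb{\ZmodZ{p^{e}}}\) to conclude \(\gamma \equiv \pm 1 \bmod p^{e}\), rules out \(\gamma \equiv 1 \bmod p^{e}\) by exhibiting the explicit non-trivial fixed point \(x^{n/p^{e}}\), and then assembles the congruences \(\gamma \equiv -1 \bmod p^{e}\) into \(\gamma \equiv -1 \bmod n\). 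Your argument is more elementary — it needs no structure theory of unit groups (which is exactly where the paper must note that \(p\) is odd) and no passage through the prime factorisation — and it also isolates cleanly where oddness of \(n\) matters: only to make a free action possible at all, not in the deduction itself. What the paper's version buys is stylistic consistency with the rest of the article, where localising at each prime and recombining (as in \cref{theo:systemOfGCDEquations} and \cref{cor:SpecRCyclicGroups}) is the standing technique, and it does not rely on \cref{lem:FixedPointsCyclicGroup} beyond the bare definition of a fixed point. One pedantic remark: \cref{lem:FixedPointsCyclicGroup} is stated for \(n \geq 2\), so strictly speaking you should dispose of \(n = 1\) separately, though there the claim is vacuous since inversion is the identity on the trivial group.
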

\begin{proof}
	Let \(y\) be the non-identity element in \(C_{2}\). Fix a generator \(x\) of \(C_{n}\) and write \(y \cdot x = x^{\gamma}\) for some \(\gamma \in \Z\). We prove that \(\gamma \equiv -1 \bmod n\). Let \(p\) be an (odd) prime factor of \(n\) and \(e\) its exponent in the prime factorisation of \(n\). Since \(\unitsb{\ZmodZ{p^{e}}}\) is cyclic, it has only one element of order \(2\), namely \(-1\). Therefore, \(\gamma \equiv \pm 1 \bmod p^{e}\). If \(\gamma \equiv 1 \bmod p^{e}\), then
	\[
		\left(x^{\frac{n}{p^{e}}}\right)^{\gamma} = x^{\frac{n \gamma}{p^{e}}} = x^{\frac{n}{p^{e}}},
	\]
	showing that \(y \cdot x^{n / p^{e}} = x^{n / p^{e}}\). As \(C_{2}\) acts without fixed points, this is a contradiction, hence \(\gamma \equiv -1 \bmod p^{e}\). As this holds for all prime factors of \(n\), we conclude that \(\gamma \equiv - 1 \bmod n\).
\end{proof}
The situation of the action of \(C_{p}\) on \(C_{p^{m}}\) is slightly different. Since we are interested in non-abelian groups, we may assume that \(m \geq 2\), as all groups of order \(p^{2}\) are abelian.
\begin{lemma}[{See \eg \cite[Theorem~4.4]{Gorenstein80}}]	\label{lem:ActionCpCp^m}
	Let \(p\) be a prime, \(m \geq 2\) an integer and let \(\alpha: C_{p} \to \Aut(C_{p^{m}})\) be an action of \(C_{p}\) on \(C_{p^{m}}\) by automorphisms. For (fixed) generators \(x, y\) of \(C_{p^{m}}\) and \(C_{p}\), put \(\alpha(y)(x) = x^{\gamma}\), where \(\gamma \in \Z\).
	\begin{enumerate}[1)]
		\item If \(m \geq 2\) and \(p \geq 3\), then \(\gamma \equiv 1 \bmod p^{m - 1}\).
		\item If \(m \geq 2\) and \(p = 2\), then \(\gamma \equiv \pm 1 \bmod 2^{m - 1}\)
	\end{enumerate}
\end{lemma}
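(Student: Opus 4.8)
The plan is to reduce the hypothesis to a single congruence on \(\gamma\) and then identify the \(p\)-th roots of unity in \(\unitsb{\ZmodZ{p^{m}}}\) case by case. Since \(y\) has order \(p\) and \(\alpha\) is a homomorphism, \(\alpha(y)^{p} = \alpha(y^{p}) = \Id\). Under the isomorphism \(\Aut(C_{p^{m}}) \cong \unitsb{\ZmodZ{p^{m}}}\) the automorphism \(\alpha(y)\) corresponds to the class of \(\gamma\), so this identity reads \(\gamma^{p} \equiv 1 \bmod p^{m}\), and of course \(\gcd(\gamma, p) = 1\). The entire lemma thus amounts to describing which units modulo \(p^{m}\) satisfy \(\gamma^{p} \equiv 1\). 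Throughout I write \(v_{p}\) for the \(p\)-adic valuation.

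For part (1), reducing \(\gamma^{p} \equiv 1 \bmod p\) and using that \(\unitsb{\ZpZ}\) has order \(p - 1\), the order of \(\gamma\) modulo \(p\) divides both \(p\) and \(p - 1\), hence divides \(\gcd(p, p - 1) = 1\); therefore \(\gamma \equiv 1 \bmod p\). For part (2) I would first run this same reduction modulo \(p\) to get \(\gamma \equiv 1 \bmod p\), so that, setting aside the trivial case \(\gamma \equiv 1 \bmod p^{m}\), one may write \(\gamma = 1 + k p^{s}\) with \(s \geq 1\) and \(\gcd(k, p) = 1\). The heart of the argument is then a valuation computation: expanding \((1 + k p^{s})^{p}\) by the binomial theorem, the linear term is \(k p^{s + 1}\) (valuation \(s + 1\)), each term indexed by \(j\) with \(2 \leq j \leq p - 1\) has valuation \(v_{p}\!\binom{p}{j} + sj = 1 + sj \geq 1 + 2s\), and the top term \((k p^{s})^{p}\) has valuation \(sp \geq s + 2\). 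Since \(p\) is odd, all of these strictly exceed \(s + 1\), so \(v_{p}(\gamma^{p} - 1) = s + 1\) exactly. The hypothesis \(\gamma^{p} \equiv 1 \bmod p^{m}\) forces \(s + 1 \geq m\), i.e.\ \(\gamma \equiv 1 \bmod p^{m - 1}\).

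For part (3) with \(p = 2\), the condition is \(\gamma^{2} \equiv 1 \bmod 2^{m}\), equivalently \(2^{m} \mid (\gamma - 1)(\gamma + 1)\). Here I would avoid the binomial identity and argue directly: \(\gamma\) is odd, so \(\gamma - 1\) and \(\gamma + 1\) are consecutive even integers, whence \(\gcd(\gamma - 1, \gamma + 1) = 2\) and exactly one of them has \(2\)-adic valuation \(1\) (if both were divisible by \(4\), so would be their difference \(2\)). Consequently the other factor must absorb all remaining powers of \(2\), giving it valuation at least \(m - 1\); that is, either \(\gamma \equiv 1 \bmod 2^{m - 1}\) or \(\gamma \equiv -1 \bmod 2^{m - 1}\), as claimed.

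The main obstacle is the valuation identity in part (2), and it is exactly there that oddness of \(p\) is essential: for \(p = 2\) the quadratic binomial term \(k^{2} p^{2s}\) has valuation \(2s\), which coincides with the valuation \(s + 1\) of the linear term when \(s = 1\), so the clean identity \(v_{p}(\gamma^{p} - 1) = s + 1\) breaks down. This is precisely the structural reason part (3) produces the weaker two-sided conclusion and must be handled by the factorization argument instead. In writing this up I would take care to dispose of the trivial case \(\gamma \equiv 1\) explicitly and to state the valuation bounds \(1 + sj \geq 1 + 2s > s + 1\) and \(sp \geq s + 2 > s + 1\) so that the "exact valuation" claim is unambiguous.
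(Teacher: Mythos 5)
Your proof is correct. Parts (1) and (2) follow essentially the paper's own route: the same reduction to \(\gamma^{p} \equiv 1 \bmod p^{m}\), the same argument modulo \(p\) (the paper invokes Fermat's little theorem on the residue of \(\gamma\); your order-divides-\(\gcd(p,p-1)\) argument is the same fact), and the same binomial expansion of \((1+kp^{s})^{p}\) --- the paper packages the key step as a contradiction modulo \(p^{k+2}\) when \(k \leq m-2\), whereas you extract the exact valuation \(v_{p}(\gamma^{p}-1)=s+1\); these are the same computation. Where you genuinely diverge is part (3). The paper stays with the binomial method for \(p=2\) and pays for it with a case analysis: \(k \geq m-1\), then \(2 \leq k \leq m-2\) (contradiction modulo \(2^{k+2}\)), then the delicate case \(k=1\), which requires the substitution \(a'+1=2b\), forcing \(b \equiv 0 \bmod 2^{m-3}\) and hence \(\gamma \equiv -1 \bmod 2^{m-1}\). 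You instead factor \(\gamma^{2}-1=(\gamma-1)(\gamma+1)\), note that these are consecutive even integers so not both can be divisible by \(4\), and conclude that one factor has \(2\)-adic valuation exactly \(1\) while the other must then have valuation at least \(m-1\). Your argument is shorter, avoids the case split entirely, and makes structurally transparent why \(p=2\) yields the two-sided conclusion \(\pm 1\) --- a point you also explain correctly via the collision of the valuations \(2s\) and \(s+1\) at \(s=1\), which is exactly where the paper's odd-\(p\) argument breaks. The only thing the paper's uniform binomial treatment buys is that both primes are handled by one technique; your hybrid is the cleaner write-up.
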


We now introduce and fix some notation for the rest of the article. The letters \(n, m, p\) will be integers with \(n \geq 1\), \(m \geq 0\), \(p\) prime and \(\gcd(n, p) = 1\). We use \(SMC(n, m, p)\) to refer to any semi-direct product of the form \((C_{n} \times C_{p^{m}}) \rtimes_{\alpha} C_{p}\) where \(C_{p}\) acts freely on \(C_{n}\) (cf.\ observation after \cref{prop:decompositionAsDirectProductSplitMetacyclic}). Here, \(\alpha: C_{p} \to \Aut(C_{n} \times C_{p^{m}})\). We put \(N = np^{m}\) and write \(C_{N}\) for the cyclic group \(C_{n} \times C_{p^{m}}\). The symbols \(x\) and \(y\) denote fixed generators of \(C_{n} \times C_{p^{m}}\) and \(C_{p}\), respectively. Since
\[
	\Aut(C_{n} \times C_{p^{m}}) \cong \unitsb{\ZnZ} \times \unitsb{\ZmodZ{p^{m}}},
\]
we will regard \(\alpha(y)\) often as a number modulo \(N\). In particular, \(\alpha(y)\) is invertible modulo \(N\) and as \(\alpha(y)\) restricted to \(C_{n}\) has no fixed points, \cref{lem:FixedPointsCyclicGroup} implies that \(\gcd(\alpha(y) - 1, n) = 1\). As \(\alpha(y)\) has order \(p\), this moreover implies that \(\gcd(\alpha(y)^{i} - 1, n) = 1\) for all \(i \in \Z\) coprime with \(p\). We will write the action of the semi-direct product as \(y \cdot x = x^{y} = x^{\alpha(y)}\).

\begin{remark}
	In the notation of \cite{GolasinskiGoncalves09}, the group \(SMC(n, m, p)\) corresponds to \(D(np^{m}, p; \alpha(y))\).
\end{remark}

\begin{lemma}	\label{lem:C_nandC_NcharacteristicinG}
	Let \(G\) be an \(SMC(n, m, p)\). If \(n \geq 2\), then the subgroups \(C_{N}\) and \(C_{n}\) are characteristic in \(G\).
\end{lemma}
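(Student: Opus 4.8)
The plan is to realise both subgroups canonically inside $G$, so that any automorphism is forced to preserve them, and I would deduce $C_N$ from $C_n$. For $C_n$: since $C_N$ is cyclic and normal in $G$ and $n \mid N$, the subgroup $C_n$ is the unique subgroup of order $n$ of $C_N$, hence characteristic in $C_N$ and therefore normal in $G$. As $n \geq 2$ and $\gcd(n, p^{m+1}) = 1$, this $C_n$ is a normal Hall $p'$-subgroup of $G$, its order being the full $p'$-part of $\size{G} = np^{m+1}$. I would make this concrete by showing $C_n = \{g \in G \mid \ord(g) \mid n\}$: every element of $C_n$ has order dividing $\size{C_n} = n$, and conversely if $\ord(g) \mid n$ then the image $gC_n$ in $G/C_n$ has order dividing $\gcd(n, p^{m+1}) = 1$, so $g \in C_n$. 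Since automorphisms preserve element orders, this description is automorphism-invariant, and thus $C_n$ is characteristic.

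For $C_N$, the idea is to recover it as the centraliser $C_G(C_n)$ of the (now characteristic) subgroup $C_n$; the centraliser of a characteristic subgroup is again characteristic, which would finish the proof. Writing a generator of $C_n$ as $x^{p^{m}}$ and an arbitrary element of $G$ as $x^{i} y^{j}$ with $0 \leq j < p$, I would determine when $x^{i} y^{j}$ commutes with $x^{p^{m}}$. As $C_N$ is abelian, the factor $x^{i}$ always commutes with $x^{p^{m}}$, so the condition reduces to $y^{j}$ centralising $x^{p^{m}}$, i.e.\ to $\alpha(y)^{j}$ fixing the generator of $C_n$. This is where the freeness of the action enters: since $\gcd(\alpha(y)^{j} - 1, n) = 1$ for every $j$ with $p \nmid j$ (as recorded just before the lemma), \cref{lem:FixedPointsCyclicGroup} shows that $\alpha(y)^{j}$ has no nontrivial fixed point in $C_n$ for $1 \leq j \leq p-1$. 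Hence only $j \equiv 0 \bmod p$ is admissible, giving $C_G(C_n) = C_N$ exactly.

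The commutator and fixed-point bookkeeping is routine; the one place that genuinely requires the hypotheses is the identification $C_G(C_n) = C_N$, where both $n \geq 2$ (so that $C_n \neq 1$ and the centraliser is proper) and the freeness of the action (so that no power $y^{j}$ with $j \not\equiv 0 \bmod p$ centralises $C_n$) are essential. I expect no serious obstacle beyond arranging these two observations in the right order, namely establishing that $C_n$ is characteristic before taking its centraliser.
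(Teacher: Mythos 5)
Your proof is correct and takes essentially the same approach as the paper: you characterise $C_n$ as the set of elements whose order divides $n$ (using that the quotient $G/C_n$ has order coprime to $n$), and then recover $C_N$ as the centraliser $C_G(C_n)$, using $n \geq 2$ and the freeness of the action to exclude every $x^i y^j$ with $j \not\equiv 0 \bmod p$. The only cosmetic difference is your invocation of Hall $p'$-subgroup language, which the paper's argument does not need.
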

\begin{proof}
Since \(C_{N}\) is cyclic, its subgroup \(C_{n}\) is characteristic in \(C_{N}\). Since \(C_{N}\) is normal in \(G\), we infer that \(C_{n}\) is normal in \(G\) as well. Let \(g \in G\) be an element such that \(g^{n} = 1\). Let \(\pi: G \to G / C_{n}\) be the natural projection. The quotient group \(G / C_{n}\) has order coprime with \(n\), so \(\pi(g)^{n} = 1\) implies \(\pi(g) = 1\). Therefore, \(g \in C_{n}\). This implies that \(C_{n} = \{g \in G \mid g^{n} = 1\}\), implying in turn that \(C_{n}\) is characteristic in \(G\).

It is clear that \(C_{N}\) is contained in the centraliser of \(C_{n}\). Conversely, let \(x^{i} y^{j}\) be an element with \(j \not \equiv 0 \bmod p\). Note that \(C_{n}\) is generated by \(x^{p^{m}}\), and \(x^{p^{m}} \ne 1\) as \(n \geq 2\). Then
\[
	\left(x^{p^{m}}\right)^{x^{i} y^{j}} = \left(x^{p^{m}}\right)^{y^{j}} = \left(x^{p^{m}}\right)^{\alpha(y)^{j}} \ne x^{p^{m}},
\]
since \(\alpha(y)\) restricted to \(C_{n}\) has no fixed points and \(j \not \equiv 0 \bmod p\). Therefore, the centraliser of \(C_{n}\) in \(G\) equals \(C_{N}\). This implies that \(C_{N}\) is characteristic as well, since \(C_{n}\) is characteristic.
\end{proof}
\begin{lemma}	\label{lem:imageyUnderAutomorphism}
	Let \(G\) be an \(SMC(n, m, p)\) with \(n \geq 2\) and let \(\phi \in \Aut(G)\). Then there is an \(a \in \Z\) such that \(\phi(y) = x^{a}y\).
\end{lemma}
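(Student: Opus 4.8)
The plan is to pin down $\phi(y)$ by combining the fact that $C_{N} = \grpgen{x}$ is characteristic with the single defining relation $x^{y} = x^{\alpha(y)}$ of the semidirect product. First, by \cref{lem:C_nandC_NcharacteristicinG} the subgroup $C_{N}$ is characteristic, so $\phi$ restricts to an automorphism of the cyclic group $C_{N}$; hence $\phi(x) = x^{b}$ for some $b$ with $\gcd(b, N) = 1$. On the other hand, every element of $G$ is uniquely of the form $x^{i} y^{j}$, so I may write $\phi(y) = x^{a} y^{c}$ for suitable integers $a$ and $c$, and the goal is precisely to show $c \equiv 1 \bmod p$.

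The key step is to apply $\phi$ to the relation $x^{y} = x^{\alpha(y)}$, that is, to $y^{-1} x y = x^{\alpha(y)}$. Since $\phi$ is a homomorphism and $\alpha(y)$ is merely an integer exponent, the right-hand side becomes $\phi(x)^{\alpha(y)} = x^{b\alpha(y)}$. For the left-hand side I would compute $\phi(y)^{-1}\phi(x)\phi(y) = y^{-c} x^{-a} x^{b} x^{a} y^{c}$; because all three inner factors are powers of $x$, their exponents simply add and collapse to $x^{b}$, leaving $y^{-c} x^{b} y^{c} = (x^{b})^{y^{c}} = x^{b\alpha(y)^{c}}$, where the last equality uses that conjugation by $y$, iterated $c$ times, raises the $x$-exponent by the factor $\alpha(y)^{c}$ (conjugation being an iterable homomorphism compatible with the chosen convention $z^{h} = h^{-1} z h$).

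Equating the two sides gives $b\alpha(y)^{c} \equiv b\alpha(y) \bmod N$, and since $\gcd(b, N) = 1$ I can cancel $b$ to obtain $\alpha(y)^{c-1} \equiv 1 \bmod N$. Because $\alpha(y)$ has order $p$ in $\unitsb{\ZmodZ{N}}$ (the action on $C_{n}$ is free, hence non-trivial, so $\alpha(y)$ is a non-identity element whose $p$-th power is trivial), this forces $p \mid c - 1$. As $y$ has order $p$, we then have $y^{c} = y$, and therefore $\phi(y) = x^{a} y$, as desired.

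I expect the only delicate point to be the bookkeeping in the conjugation computation: one must respect the convention $x^{h} = h^{-1} x h$ used throughout and verify carefully that $(x^{b})^{y^{c}} = x^{b\alpha(y)^{c}}$, together with the observation that $\alpha(y)$ genuinely has order $p$ modulo $N$ rather than merely modulo $n$. Everything else reduces to routine manipulation of exponents inside the abelian subgroup $C_{N}$.
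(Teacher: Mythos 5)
Your proposal is correct and follows essentially the same route as the paper: both use \cref{lem:C_nandC_NcharacteristicinG} to write \(\phi(x) = x^{\gamma}\), apply \(\phi\) to the defining relation \(x^{y} = x^{\alpha(y)}\), cancel the invertible exponent of \(x\) to get \(\alpha(y)^{c-1} \equiv 1 \bmod N\), and conclude \(c \equiv 1 \bmod p\) from the fact that \(\alpha(y)\) has order exactly \(p\) modulo \(N\) (which, as you correctly note, uses freeness of the action on \(C_{n}\) together with \(n \geq 2\)). The only cosmetic difference is that the paper conjugates \(x^{\gamma}\) by \(\phi(y)\) directly, whereas you apply \(\phi\) to \(y^{-1}xy\); these are the same computation.
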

\begin{proof}
	Since \(C_{N}\) is characteristic by \cref{lem:C_nandC_NcharacteristicinG}, \(\phi(x) = x^{\gamma}\) for some \(\gamma \in \Z\) coprime with \(N\), as \(\phi\) is an automorphism. Write \(\phi(y) = x^{a}y^{b}\). Since \(\phi\) preserves the identity \(x^{y} = x^{\alpha(y)}\), we find that
	\[
		x^{\gamma \alpha(y)} = (x^{\gamma})^{x^{a}y^{b}} = y^{-b} x^{\gamma} y^{b} = (y^{-b} x y^{b})^{\gamma} = x^{\alpha(y)^{b} \gamma}.
	\]
	Thus, \(\alpha(y)^{b} \gamma \equiv \gamma \alpha(y) \bmod N\). Since \(\gamma\) and \(\alpha(y)\) are both invertible modulo \(N\), we see that \(\alpha(y)^{b-1} \equiv 1 \bmod N\). The order of \(\alpha(y)\) is \(p\), hence \(b \equiv 1 \bmod p\). Since \(y\) has order \(p\) as well, we may assume that \(b = 1\), proving the lemma.
\end{proof}

Finally, as we need the number of one-dimensional characters on \(G\), we will have to determine its commutator group.
\begin{prop}[{See \eg \cite[Corollary~3.1]{GolasinskiGoncalves09}}]	\label{prop:commutatorSubgroupSMC}
	Let \(G\) be an \(SMC(n, m, p)\). Then \([G, G] = \grpgen{x^{\alpha(y) - 1}}\).
\end{prop}

\section{Determining \(\SpecR(SMC(n, m, p))\)}
In this section, we will determine the Reidemeister spectrum of an arbitrary \(SMC(n, m, p)\). We first investigate how we can further simplify \cref{prop:generalExpressionchpphi} for \(SMC(n, m, p)\). So, let \(G\) be an arbitrary \(SMC(n, m, p)\). Then, in the notation of \cref{subsec:CharactersSemidirectProduct}, \(A = C_{N}\). Suppose that \(C_{N}\) is characteristic (for instance, when \(n \geq 2\)). Let \(\phi \in \Aut(G)\) and put \(\phi(x) = x^{\gamma}\) with \(\gcd(\gamma, N) = 1\). We have to determine \(\size{\Fix(\phi' \circ \alpha(y)^{i})}\) for \(i \in \{0, \ldots, p - 1\}\). By \cref{lem:FixedPointsCyclicGroup}, we know that
\[
	\size{\Fix(\phi' \circ \alpha(y)^{i})} = \gcd(\gamma \alpha(y)^{i} - 1, N).
\]
Strictly speaking, \(\gamma\alpha(y)^{i}\) is an integer modulo \(N\) and no true integer, but we interpret \(\gamma \alpha(y)^{i}\) as a representative of the congruence class. Since we are interested in the greatest common divisor with \(N\), this yields no problems. We can get rid of the product by multiplying by \(\alpha(y)^{-i}\), the \(i\)th power of (a representative of) the multiplicative inverse modulo \(N\) of \(\alpha(y)\). Then the gcd does not change:
\[
	\size{\Fix(\phi' \circ \alpha(y)^{i})} = \gcd(\gamma - \alpha(y)^{-i}, N)
\]

For \(\size{\Fix(\dual{\phi'}) \cap \Fix(\dual{\alpha(y)})}\), note that since \(\dual{C_{N}} \cong C_{N}\), we can fix a generator \(\chi_{1}\) of \(\dual{C_{N}}\). Every character of \(C_{N}\) is then of the form \(\chi_{a} := \chi_{1}^{a}\) for some \(a \in \Z\). Then we find for \(a, k \in \{0, \ldots, N - 1\}\) that
\[
	(\chi_{a} \circ \phi')(x^{k}) = \chi_{a}(x^{\gamma k}) = \chi_{a}(x^{k})^{\gamma} = \chi_{a\gamma}(x^{k}),
\]
hence \(\dual{\phi'}(\chi_{a}) = \chi_{a}^{\gamma}\) and similarly \(\dual{\alpha(y)}(\chi_{a}) = \chi_{a}^{\alpha(y)}\). Therefore, \cref{lem:FixedPointsCyclicGroup} yields
\[
	\size{\Fix(\dual{\phi'})} = \gcd(\gamma - 1, N), \quad \size{\Fix(\dual{\alpha(y)})} = \gcd(\alpha(y) - 1, N).
\]
\cref{lem:intersectionSubgroupsCyclicGroup} then implies that
\[
	\size{\Fix(\dual{\phi'}) \cap \Fix(\dual{\alpha(y)})} = \gcd(\gcd(\gamma - 1, N), \gcd(\alpha(y) - 1, N))
\]
which further simplifies to \(\gcd(\gamma - 1, \alpha(y) - 1, N)\). Finally, by the definition of an \(SMC(n, m, p)\), we know that \(\gcd(\alpha(y) - 1, n) = 1\), implying that
\[
	\gcd(\gamma - 1, \alpha(y) - 1, N) = \gcd(\gamma - 1, \alpha(y) - 1, p^{m}).
\]

We therefore conclude that
\begin{equation*}
	\ch_{p, \phi} = \frac{1}{p} \left(\sum_{i = 0}^{p - 1} \gcd(\gamma - \alpha(y)^{-i}, N)\right) - \gcd(\gamma - 1, \alpha(y) - 1, p^{m}).
\end{equation*}
Again, switching from \(-i\) to \(i\) does not change the summation, so we will use
\begin{equation}	\label{eq:specificExpressionchpphiSMC}
	\ch_{p, \phi} = \frac{1}{p} \left(\sum_{i = 0}^{p - 1} \gcd(\gamma - \alpha(y)^{i}, N)\right) - \gcd(\gamma - 1, \alpha(y) - 1, p^{m}).
\end{equation}

We will distinguish several cases to effectively determine the Reidemeister spectrum of all \(SMC(n, m, p)\). Each case will be treated in a similar way: we start by determining the possible values for \(\ch_{1, \phi}\). Next, we simplify \eqref{eq:specificExpressionchpphiSMC} based on the specific action (except for the final case, there we will use a different approach). Finally, we combine both results to find candidate-Reidemeister numbers and finish by deciding which ones actually occur. 

\subsection{\(\SpecR(SMC(n, m, p))\) where \(C_{p}\) acts trivially on \(C_{p^{m}}\)}
Let \(G\) be an \(SMC(n, m, p)\) with \(n \geq 2\) where \(C_{p}\) acts trivially on \(C_{p^{m}}\). Recall that each prime factor of \(n\) is strictly greater than \(p\). In particular, \(n\) is odd: \(p\) is at least \(2\), hence each prime factor of \(n\) is at least \(3\). Also, \(C_{N} = \grpgen{x}\) is characteristic in \(G\), by \cref{lem:C_nandC_NcharacteristicinG}. Moreover, \cref{prop:commutatorSubgroupSMC} states that \([G, G] = \grpgen{x^{\alpha(y) - 1}}\). We already assume that \(\gcd(\alpha(y) - 1, n) = 1\) and as \(C_{p}\) acts trivially on \(C_{p^{m}}\), \(\gcd(\alpha(y) - 1, p^{m}) = p^{m}\). Therefore, \([G, G] = \grpgen{x^{p^{m}}} = C_{n}\).
\begin{prop}	\label{prop:ExpressionFork1phiSMC(n0p)}
	Let \(\phi \in \Aut(G)\) be an automorphism. Write \(\phi(x) = x^{\gamma}\) for some \(\gamma \in \Z\) coprime with \(N\) and put \(e = \ord_{p}(\gcd(\gamma - 1, p^{m}))\). Then \(\ch_{1, \phi} \in \{p^{e}, p^{e + 1}\}\).
\end{prop}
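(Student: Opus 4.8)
The plan is to compute $\ch_{1,\phi}$ through \cref{lem:FixedCharactersDimension1}, which identifies it with $R(\ab{\phi})$ for the induced automorphism on $\ab{G} = G/\gamma_{2}(G)$. Since the preceding lemma gives $\gamma_{2}(G) = C_{n}$, the first task is to pin down both the group $G/C_{n}$ and the automorphism $\ab{\phi}$, after which the whole computation reduces to counting fixed points of an explicit self-map of a small abelian $p$-group.

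First I would determine $G/C_{n}$. As $C_{n}$ is normal, quotienting it out of $(C_{n} \times C_{p^{m}}) \rtimes_{\alpha} C_{p}$ leaves $C_{p^{m}} \rtimes_{\bar{\alpha}} C_{p}$, and because $C_{p}$ acts trivially on $C_{p^{m}}$ the induced action $\bar{\alpha}$ is trivial; hence $G/C_{n} \cong C_{p^{m}} \times C_{p}$, with the images $\bar{x}, \bar{y}$ of the fixed generators $x, y$ having orders $p^{m}$ and $p$. Using $\phi(x) = x^{\gamma}$ together with \cref{lem:imageyUnderAutomorphism}, which provides $\phi(y) = x^{a}y$ for some $a \in \Z$, I read off $\ab{\phi}$ in additive coordinates on $C_{p^{m}} \times C_{p}$ as $\ab{\phi}(\bar{x}) = \gamma \bar{x}$ and $\ab{\phi}(\bar{y}) = a\bar{x} + \bar{y}$.

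Since $G/C_{n}$ is abelian, \cref{prop:ReidemeisterNumberEqualsNumberOfFixedConjugacyClasses} gives $\ch_{1,\phi} = R(\ab{\phi}) = \size{\Fix(\ab{\phi})}$, so it remains to count the pairs $(i, j) \in \ZmodZ{p^{m}} \times \ZpZ$ with $\ab{\phi}(i\bar{x} + j\bar{y}) = i\bar{x} + j\bar{y}$. The $\bar{y}$-coordinate imposes no condition, and the $\bar{x}$-coordinate yields $(\gamma - 1)i + aj \equiv 0 \bmod p^{m}$. Writing $p^{e} = \gcd(\gamma - 1, p^{m})$ as in the statement, for each fixed $j$ the congruence $(\gamma - 1)i \equiv -aj \bmod p^{m}$ has exactly $p^{e}$ solutions in $i$ when $p^{e} \mid aj$ and none otherwise. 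To finish, I would count the admissible $j \in \{0, \ldots, p - 1\}$: the value $j = 0$ always qualifies, whereas for $1 \leq j \leq p - 1$ the residue $j$ is a unit modulo $p$, so $p^{e} \mid aj$ holds precisely when $p^{e} \mid a$. Thus there are $p$ admissible values of $j$ if $p^{e} \mid a$ and exactly one otherwise, giving $\size{\Fix(\ab{\phi})} = p^{e+1}$ or $p^{e}$ respectively, which is the claim.

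The argument is mechanical once the structure is in place; the one step demanding attention is the solution count for $(\gamma - 1)i \equiv -aj \bmod p^{m}$ when $\gamma - 1$ fails to be invertible modulo $p^{m}$, in particular the extreme case $e = m$ where $i \mapsto (\gamma - 1)i$ is the zero map and the condition collapses to $p^{m} \mid aj$. The dichotomy $p^{e} \mid a$ versus $p^{e} \nmid a$, detected through the coprimality of nonzero $j$ to $p$, is exactly what separates the two candidate values, and no finer information about $a$ is required.
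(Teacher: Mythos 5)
Your proof is correct and follows essentially the same route as the paper: reduce \(\ch_{1, \phi}\) to \(R(\ab{\phi}) = \size{\Fix(\ab{\phi})}\) on \(G/\gamma_{2}(G) \cong C_{p^{m}} \times C_{p}\) via \cref{lem:FixedCharactersDimension1} and \cref{lem:imageyUnderAutomorphism}, and then count the solutions of \((\gamma - 1)i + aj \equiv 0 \bmod p^{m}\). The only difference is the endgame: the paper first derives \(a \equiv 0 \bmod p^{m-1}\) from the order of \(\ab{\phi}(\bar{y})\) and splits into the cases \(m = 0\), \(e \leq m - 1\), and \(e = m\), whereas you invoke the standard solvability-and-count criterion for linear congruences and sort by \(j\), which handles all cases (including \(m = 0\)) uniformly and needs no information about \(a\) beyond whether \(p^{e} \mid a\).
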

\begin{proof}
	As \(n \geq 2\), \(\phi(y) = x^{a}y\) for some \(a \in \Z\), by \cref{lem:imageyUnderAutomorphism}. Also, \(\ab{\phi}(\bar{x}) = \bar{x}^{\gamma}\), thus \(\ab{\phi}(\bar{x},\bar{y}) = (\bar{x}^{\gamma + a}, \bar{y})\) on \(G / \gamma_{2}(G) \cong C_{p^{m}} \times C_{p}\). If \(m = 0\), then \(\ab{\phi}\) is the identity map, hence \(\ch_{1, \phi} = R(\ab{\phi}) = p = p^{e + 1}\).
	
	Suppose \(m \geq 1\). Since \(\bar{y}\) has order \(p\), so does \(\ab{\phi}(\bar{y})\), hence \(a \equiv 0 \bmod p^{m - 1}\). As we work in an abelian group, we have that \(R(\ab{\phi}) = \size{\Fix(\ab{\phi})}\), thus we need to determine the number of solutions of the congruence
	\[
		i (\gamma - 1) + ja \equiv 0 \bmod p^{m},
	\]
	where \(i \in \{0, \ldots, p^{m} - 1\}\) and \(j \in \{0, \ldots, p - 1\}\). Write \(\gamma - 1 = p^{e}\Gamma\) and \(a = p^{m - 1}A\) with \(A, \Gamma \in \Z\). Then the congruence becomes
	\[
		i \Gamma p^{e} + j A p^{m - 1} \equiv 0 \bmod p^{m}.
	\]
	First suppose that \(e = m\). Then \(i\) can take on any value, yielding \(p^{m}\) possibilities. If \(A \equiv 0 \bmod p\), then \(j\) can also take on any value, yielding \(p^{m + 1}\) total solutions. If \(A \not \equiv 0 \bmod p\), then \(j \equiv 0 \bmod p\), yielding \(1\) possibility for \(j\) and hence \(p^{m}\) solutions in total.
	
	Now suppose that \(e \leq m - 1\). Then  \(\gcd(\Gamma, p) = 1\). Modulo \(p^{m - 1}\), the congruence yields \(i \Gamma p^{e} \equiv 0 \bmod p^{m - 1}\), hence \(i \equiv 0 \bmod p^{m - e - 1}\). Write \(i = p^{m - e - 1} I\). Dividing by \(p^{m - 1}\) the congruence becomes
	\[
		I \Gamma + jA \equiv 0 \bmod p.
	\]
	Recall that \(\Gamma\) is coprime with \(p\). If \(A \equiv 0 \bmod p\), then \(j\) can take on any value and \(I \equiv 0 \bmod p\). Thus, \(i \equiv 0 \bmod p^{m - e}\), yielding \(p^{e}\) possibilities for \(i\) and a total of \(p^{e}\cdot p = p^{e + 1}\) solutions. If \(A \not \equiv 0 \bmod p\), then each value of \(j\) yields a unique value for \(I\) modulo \(p\). As \(0 \leq i \leq p^{m} - 1\), we have that \(0 \leq I \leq p^{e + 1} - 1\), hence each value of \(j\) yields \(p^{e}\) possibilities for \(i\). Thus, we find a total of \(p^{e}\cdot p = p^{e + 1}\) solutions to the original congruence.
	
	Summarised, we have
	\begin{itemize}
		\item \(p\) solutions if \(m = 0\);
		\item \(p^{e + 1}\) solutions if \(e \leq m - 1\) and \(m \geq 1\);
		\item \(p^{e + 1}\) solutions if \(e = m\) and \(a \equiv 0 \bmod p^{m}\);
		\item \(p^{e}\) solutions if \(e = m\) and \(a \not \equiv 0 \bmod p^{m}\).\qedhere
	\end{itemize}
\end{proof}
\begin{remark}
Note that \(e\) as defined above is at least \(1\) if \(p = 2\) and \(m \geq 1\), since then \(\gamma\) is odd and hence \(\gamma - 1\) is even.
\end{remark}
\begin{prop}	\label{prop:ExpressionkpphiSMC(n0p)}
	Let \(\phi \in \Aut(G)\) and write \(\phi(x) = x^{\gamma}\) for some \(\gamma \in \Z\) coprime with \(N\). Put \(e = \ord_{p}(\gcd(\gamma - 1, p^{m}))\) and put, for \(i \in \{0, \ldots, p - 1\}, d_{i} = \gcd(\gamma - \alpha(y)^{i}, N) / p^{e}\). Then \(d_{0}, \ldots, d_{p - 1}\) are pairwise coprime integers, all divide \(n\) and we have that
	\[
		\ch_{p, \phi} = p^{e - 1}\left(\sum_{i = 0}^{p - 1}d_{i}\right) - p^{e}.
	\]
	Moreover, if \(p = 2\) and \(n \equiv 0 \bmod 3\), then \(d_{0}d_{1} \equiv 0 \bmod 3\).
\end{prop}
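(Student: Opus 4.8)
The plan is to push every greatest common divisor in \eqref{eq:specificExpressionchpphiSMC} onto its prime-to-$p$ part, using crucially that the action on $C_{p^m}$ is trivial. Since $\gcd(n, p^{m}) = 1$, the Chinese Remainder Theorem splits
\[
	\gcd(\gamma - \alpha(y)^{i}, N) = \gcd(\gamma - \alpha(y)^{i}, n) \cdot \gcd(\gamma - \alpha(y)^{i}, p^{m}).
\]
Because $C_{p}$ acts trivially on $C_{p^{m}}$, we have $\alpha(y) \equiv 1 \bmod p^{m}$, hence $\alpha(y)^{i} \equiv 1 \bmod p^{m}$ and therefore $\gcd(\gamma - \alpha(y)^{i}, p^{m}) = \gcd(\gamma - 1, p^{m}) = p^{e}$ for every $i$. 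This immediately yields $d_{i} = \gcd(\gamma - \alpha(y)^{i}, n)$, so each $d_{i}$ is an integer dividing $n$. For pairwise coprimality I would argue by contradiction: if a prime $q$ divided both $d_{i}$ and $d_{j}$ with $i \ne j$, then $q \mid n$ and $q \mid \alpha(y)^{i} - \alpha(y)^{j} = \alpha(y)^{j}(\alpha(y)^{i - j} - 1)$; as $\alpha(y)$ is a unit modulo $N$ and hence coprime to $q$, this forces $q \mid \alpha(y)^{i - j} - 1$. But $i - j \not\equiv 0 \bmod p$, so the standing fact $\gcd(\alpha(y)^{i - j} - 1, n) = 1$ contradicts $q \mid n$.

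For the formula I substitute these values into \eqref{eq:specificExpressionchpphiSMC}. The summands become $\gcd(\gamma - \alpha(y)^{i}, N) = p^{e} d_{i}$, and the subtracted term collapses because $p^{m} \mid \alpha(y) - 1$ (again by triviality of the action on $C_{p^{m}}$), making $\gcd(\gamma - 1, \alpha(y) - 1, p^{m}) = \gcd(\gamma - 1, p^{m}) = p^{e}$. Hence
\[
	\ch_{p, \phi} = \frac{1}{p}\sum_{i = 0}^{p - 1} p^{e} d_{i} - p^{e} = p^{e}\left(\frac{1}{p}\sum_{i = 0}^{p - 1} d_{i} - 1\right) = p^{e}\sum_{i = 0}^{p - 1} \frac{d_{i} - 1}{p},
\]
where the last step rewrites $-1$ as $-p/p$ and distributes the sum.

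For the final assertion, suppose $p = 2$ and $3 \mid n$; note $3 \ne p$ and, since $C_{2}$ acts freely on $C_{n}$, \cref{lem:fixedPointFreeActionsC2OnCyclicGroup} gives $\alpha(y) \equiv -1 \bmod n$, so $\alpha(y) \equiv 2 \bmod 3$. Here $d_{0} = \gcd(\gamma - 1, n)$ and $d_{1} = \gcd(\gamma - \alpha(y), n)$, and $\gcd(\gamma, N) = 1$ forces $\gamma \equiv \pm 1 \bmod 3$. If $\gamma \equiv 1 \bmod 3$, then $3 \mid \gamma - 1$ and $3 \mid n$, so $3 \mid d_{0}$; if $\gamma \equiv -1 \bmod 3$, then $\gamma - \alpha(y) \equiv \gamma + 1 \equiv 0 \bmod 3$, so $3 \mid d_{1}$. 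In either case $3 \mid d_{0} d_{1}$. I expect the only delicate bookkeeping to be the collapse of the subtracted term to exactly $p^{e}$; the guiding observation throughout is that the triviality of the action on $C_{p^{m}}$ is precisely what drives both $\gcd(\gamma - \alpha(y)^{i}, p^{m}) = p^{e}$ and $p^{m} \mid \alpha(y) - 1$, while everything else is routine.
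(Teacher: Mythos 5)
Your proof is correct and takes essentially the same route as the paper: both start from \eqref{eq:specificExpressionchpphiSMC}, use the triviality of the action modulo \(p^{m}\) to pin the \(p\)-part of every gcd (including the subtracted term) at \(p^{e}\), use \(\gcd(\alpha(y)^{i-j} - 1, n) = 1\) to get pairwise coprimality, and settle the \(p = 2\), \(n \equiv 0 \bmod 3\) case via the inversion action from \cref{lem:fixedPointFreeActionsC2OnCyclicGroup}. The only cosmetic difference is that you split each \(\gcd(\gamma - \alpha(y)^{i}, N)\) into its \(n\)-part and \(p^{m}\)-part by the Chinese Remainder Theorem up front, whereas the paper reaches the same conclusions by computing the pairwise gcds \(\gcd(a_{i}, a_{j})\) directly; the substance is identical.
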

\begin{proof}
	Fix \(\phi \in \Aut(G)\). As \(C_{N}\) is characteristic, we know by \eqref{eq:specificExpressionchpphiSMC} that
	\begin{equation}	\label{eq:expressionchpphiSMC(nmp)trivialaction}
		\ch_{p, \phi} = \frac{1}{p} \left(\sum_{i = 0}^{p - 1} \gcd(\gamma - \alpha(y)^{i}, N)\right) - \gcd(\gamma - 1, \alpha(y) - 1, p^{m}).
	\end{equation}
	Put, for \(i \in \{0, \ldots, p - 1\}\), \(a_{i} = \gcd(\gamma - \alpha(y)^{i}, N)\). Then for \(i \ne j \in \{0, \ldots, p - 1\}\) we have
	\[
		\gcd(a_{i}, a_{j}) = \gcd(\gamma - \alpha(y)^{i}, \gamma - \alpha(y)^{j}, N) = \gcd(\gamma - \alpha(y)^{i}, \alpha(y)^{i} - \alpha(y)^{j}, N).
	\]
	Without loss of generality, assume \(i > j\). Since \(\alpha(y)\) is coprime with \(N\), \(\gcd(\alpha(y)^{i} - \alpha(y)^{j}, N) = \gcd(\alpha(y)^{i - j} - 1, N)\). As \(0 < i - j < p\) and by the definition of an \(SMC(n, m, p)\), we have that \(\gcd(\alpha(y)^{i - j} - 1, n) = 1\), hence
	\[
		\gcd(\alpha(y)^{i - j} - 1, N) = \gcd(\alpha(y)^{i - j} - 1, p^{m}).
	\]
	Furthermore, we assume here that \(C_{p}\) acts trivially on \(C_{p^{m}}\), implying that \(\alpha(y) \equiv 1 \bmod p^{m}\). Thus, \(\gcd(\alpha(y)^{i - j} - 1, p^{m}) = p^{m}\), implying that
	\[
		\gcd(a_{i}, a_{j}) = \gcd(\gamma - \alpha(y)^{i}, p^{m}) = \gcd(\gamma - 1, p^{m}) = p^{e}.
	\]
	Thus, each \(d_{i} = a_{i}/p^{e}\) is an integer, and as \(a_{i}\) divides \(N\) and
	\[
		\gcd(a_{i}, p^{m}) = \gcd(\gamma - \alpha(y)^{i}, p^{m}) = \gcd(\gamma - 1, p^{m}) = p^{e}
	\]
	 for each \(i \in \{0, \ldots, p - 1\}\), the \(d_{i}\)'s are pairwise coprime divisors of \(n\).
	 	
	Similarly, we find that
	\[
		\gcd(\gamma - 1, \alpha(y) - 1, p^{m}) = \gcd(\gamma - 1, p^{m}) = p^{e}.
	\]
	We can therefore rewrite \eqref{eq:expressionchpphiSMC(nmp)trivialaction} as
	\[
		\ch_{p, \phi} = \frac{1}{p} \left(\sum_{i = 0}^{p - 1} p^{e}d_{i}\right) - p^{e} = p^{e - 1}\left(\sum_{i = 0}^{p - 1} d_{i}\right) - p^{e},
	\]

	yielding the desired expression.
	
	Finally, suppose that \(p = 2\) and that \(n \equiv 0 \bmod 3\). Since \(C_{2}\) acts without fixed points on \(C_{n}\), it acts by inversion, by \cref{lem:fixedPointFreeActionsC2OnCyclicGroup}, so \(\alpha(y) \equiv -1 \bmod n\). Then \(a_{0} = \gcd(\gamma - 1, N)\) and \(a_{1} = \gcd(\gamma + 1, N)\). As \(\gcd(\gamma, N) = 1\), it follows that \(\gamma \equiv \pm 1 \bmod 3\), hence \(a_{0} a_{1} \equiv 0 \bmod 3\), and thus also \(d_{0} d_{1} \equiv 0 \bmod 3\).
	\end{proof}
We can now fully determine the Reidemeister spectrum. For positive integers \(a\) and \(b\) we put
\begin{align*}
	D(a, b)	&= \{(d_{1}, \ldots, d_{b}) \in \Z_{\geq 0}^{b} \mid \forall i \in \{1, \ldots, b\}: d_{i} \mid a, \forall i \ne j \in \{1, \ldots, b\}: \gcd(d_{i}, d_{j}) = 1\},	\\
	D'(a, b)	&= \{(d_{1}, \ldots, d_{b}) \in D(a, b) \mid \exists i \in \{1, \ldots, b\}: d_{i} \equiv 0 \bmod 3\}.
\end{align*}
For \(d \in D(a, b)\) or \(D'(a, b)\), we define \(\Sigma(d) := \sum\limits_{i = 1}^{p} d_{i}\).
\begin{theorem}	\label{theo:SpecRSMC(n0p)}
	Let \(G\) be an \(SMC(n, m, p)\) with \(n \geq 2\) and where \(C_{p}\) acts trivially on \(C_{p^{m}}\). 
	\begin{enumerate}[1)]
		\item If \(p\) is odd and \(m = 0\), then
		\[
			\SpecR(G) = \left\{p + \frac{\Sigma(d)}{p} - 1 \middlebar d \in D(n, p)\right\}.
		\]
		\item If \(p\) is odd and \(m \geq 1\), then
		\[
		\begin{split}
			\SpecR(G) = \left\{p^{e + 1} + p^{e - 1} (\Sigma(d) - p) \middlebar 0 \leq e \leq m, d \in D(n, p)\right\} \cup{}\\
						\left\{p^{m} + p^{m - 1} (\Sigma(d) - p) \middlebar d \in D(n, p)\right\}.
		\end{split}
		\]	
		\item If \(p = 2\), \(m = 0\) and \(n \not \equiv 0 \bmod 3\), then
		\[
			\SpecR(G) = \left\{2 + \frac{\Sigma(d)}{2} - 1\middlebar d \in D(n, 2)\right\}.
		\]
		\item If \(p = 2\), \(m \geq 1\) and \(n \not \equiv 0 \bmod 3\), then
		\[
		\begin{split}
			\SpecR(G) = \left\{2^{e + 1} + 2^{e - 1} (\Sigma(d) - 2) \middlebar 1 \leq e \leq m, d \in D(n, 2)\right\} \cup{}\\
						\left\{2^{m} + 2^{m - 1} (\Sigma(d) - 2) \middlebar d \in D(n, 2)\right\}.
		\end{split}
		\]
		\item If \(p = 2\) and \(n \equiv 0 \bmod 3\), then \(D(n, p)\) has to be replaced with \(D'(n, p)\) in both expressions.
	\end{enumerate}
\end{theorem}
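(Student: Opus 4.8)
The starting point is that by \cref{lem:DimensionCharactersACp} every irreducible character of \(G\) is either \(1\)- or \(p\)-dimensional, so the corollary expressing \(R(\phi)\) as the number of fixed irreducible characters immediately gives
\[
	R(\phi) = \ch_{1, \phi} + \ch_{p, \phi}.
\]
The plan is therefore to combine the two explicit formulas \cref{prop:ExpressionFork1phiSMC(n0p)} and \cref{prop:ExpressionkpphiSMC(n0p)}, and then to verify that every admissible parameter combination is actually realised by an automorphism. Throughout I write \(\phi(x) = x^{\gamma}\), set \(e = \ord_{p}(\gcd(\gamma - 1, p^{m}))\), and put \(d_{i} = \gcd(\gamma - \alpha(y)^{i}, N)/p^{e}\).

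For the inclusion \(\subseteq\) I would fix \(\phi \in \Aut(G)\). By \cref{prop:ExpressionkpphiSMC(n0p)} the tuple \((d_{0}, \ldots, d_{p - 1})\) lies in \(D(n, p)\) (in \(D'(n, p)\) when \(p = 2\) and \(3 \mid n\)) and \(\ch_{p, \phi} = p^{e} \sum_{i = 0}^{p - 1} \frac{d_{i} - 1}{p}\), while \cref{prop:ExpressionFork1phiSMC(n0p)} shows \(\ch_{1, \phi} \in \{p^{e}, p^{e + 1}\}\) with the exact value pinned down by \(m\) and the parameter \(a\) of \(\phi(y) = x^{a} y\). Adding the two contributions and splitting into the cases \(m = 0\), \(e < m\), \(e = m\) with \(\ch_{1, \phi} = p^{e + 1}\), and \(e = m\) with \(\ch_{1, \phi} = p^{m}\) then places \(R(\phi)\) in the claimed set; in particular the last two cases account precisely for the two sets appearing in the unions of items 2) and 4). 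The remark following \cref{prop:ExpressionFork1phiSMC(n0p)} forces \(e \geq 1\) when \(p = 2\) and \(m \geq 1\), which is exactly the lower bound on \(e\) in items 4) and 5).

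For the inclusion \(\supseteq\) I would, given a target datum consisting of \(e\) in the allowed range, a choice between \(\ch_{1, \phi} = p^{e}\) and \(p^{e + 1}\) (only relevant when \(e = m\)), and a tuple in \(D(n, p)\) (resp.\ \(D'(n, p)\)), construct \(\phi\) on generators by \(\phi(x) = x^{\gamma}\) and \(\phi(y) = x^{a} y\), the only possible shape by \cref{lem:imageyUnderAutomorphism}. The residue of \(\alpha(y)\) modulo \(n\) is a unit of order \(p\) with \(\gcd(\alpha(y) - 1, n) = 1\) and \(n\) is odd, so \cref{theo:systemOfGCDEquations} applies and yields \(\gamma_{n}\) coprime to \(n\) with \(\gcd(\gamma_{n} - \alpha(y)^{i}, n) = d_{i}\) for all \(i\); its extra hypothesis for \(p = 2\) and \(3 \mid n\) is exactly membership in \(D'(n, p)\). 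I would then pick \(\gamma_{p^{m}}\) coprime to \(p\) with \(\ord_{p}(\gcd(\gamma_{p^{m}} - 1, p^{m})) = e\) (for instance \(\gamma_{p^{m}} = 1 + p^{e}\) when \(e < m\) and \(\gamma_{p^{m}} = 1\) when \(e = m\)), glue \(\gamma_{n}\) and \(\gamma_{p^{m}}\) by the Chinese Remainder Theorem into a \(\gamma\) coprime to \(N\), and choose \(a \equiv 0 \bmod p^{m - 1}\), which is what makes \(x^{a} y\) have order \(p\) and hence \(\phi\) a genuine automorphism. Taking \(a = 0\) gives \(\ch_{1, \phi} = p^{e + 1}\), and taking \(a = p^{m - 1}\) gives \(\ch_{1, \phi} = p^{m}\) in the case \(e = m\). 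Because \(\alpha(y) \equiv 1 \bmod p^{m}\) forces \(\gcd(\gamma - \alpha(y)^{i}, p^{m}) = p^{e}\) for every \(i\), one checks \(\gcd(\gamma - \alpha(y)^{i}, N) = p^{e} d_{i}\), so the constructed \(\phi\) realises \(R(\phi) = \ch_{1, \phi} + p^{e} \sum_{i = 0}^{p - 1} \frac{d_{i} - 1}{p}\).

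The routine but delicate part is the bookkeeping in both inclusions, namely tracking whether \(p^{e}\) or \(p^{e + 1}\) occurs as \(\ch_{1, \phi}\) and matching it against the two pieces of the unions. The genuine obstacle is the \(\supseteq\)-direction, and it is entirely absorbed by \cref{theo:systemOfGCDEquations}: simultaneously realising all the prescribed greatest common divisors \(\gcd(\gamma - \alpha(y)^{i}, n) = d_{i}\) together with \(\gcd(\gamma, n) = 1\) is precisely its content, and the defining condition of \(D'(n, p)\) is exactly the obstruction it detects when \(p = 2\) and \(3 \mid n\). Once that theorem is available, the \(p\)-part is governed independently through the Chinese Remainder Theorem and the choice of \(a\), and the construction goes through.
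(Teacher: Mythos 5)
Your proposal is correct and follows essentially the same route as the paper: the \(\subseteq\)-inclusion by combining \cref{prop:ExpressionFork1phiSMC(n0p)} (with its remark) and \cref{prop:ExpressionkpphiSMC(n0p)}, and the \(\supseteq\)-inclusion by invoking \cref{theo:systemOfGCDEquations} for the prime-to-\(p\) part, gluing in \(1 + p^{e}\) modulo \(p^{m}\) via the Chinese Remainder Theorem, and realising the two pieces of the union through the choice of \(a\) in \(\phi(y) = x^{a}y\). The only (immaterial) difference is your witness \(a = p^{m-1}\) where the paper takes \(a = np^{m-1}\), whose advantage is that \(x^{np^{m-1}} \in C_{p^{m}}\) visibly commutes with \(y\) so the order-\(p\) check is immediate; your choice also works, since \(a \equiv 0 \bmod p^{m-1}\) is exactly the condition for \(x^{a}y\) to have order \(p\).
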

\begin{proof}
	For the \(\subseteq\)-inclusion for all cases of \(p\) and \(m\), we combine \cref{prop:ExpressionFork1phiSMC(n0p)} (together with the remark following it) with \cref{prop:ExpressionkpphiSMC(n0p)}.
	
	Next, we investigate the other inclusion for both cases of \(p\) and \(m\). To prove that the candidate-Reidemeister numbers do indeed lie in \(\SpecR(G)\), we have to solve the following problem: given \(d := (d_{0}, \ldots, d_{p - 1})\) and \(e\) satisfying the necessary conditions, find an integer \(\gamma \in \Z\) coprime with \(N\) such that
\[
	\gcd(\gamma - \alpha(y)^{i}, N) = p^{e}d_{i} \text{ for all } i \in \{0, \ldots, p - 1\}.
\]
Indeed, \cref{prop:ExpressionFork1phiSMC(n0p),prop:ExpressionkpphiSMC(n0p)} then imply that the map \(\phi: G \to G\) given by \(\phi(x) = x^{\gamma}, \phi(y) = y\) is an automorphism of \(G\) with Reidemeister number equal to \(p^{e + 1} + p^{e - 1}(\Sigma(d) - 1)\).
For \(e = m \geq 1\), the map \(\phi: G \to G\) given by \(\phi(x) = x^{\gamma}, \phi(y) = x^{np^{m - 1}} y\) is then an automorphism of \(G\) with Reidemeister number equal to \(p^{m} + p^{m - 1} (\Sigma(d) - p)\),
again by \cref{prop:ExpressionFork1phiSMC(n0p),prop:ExpressionkpphiSMC(n0p)}. Note that \(\phi(y)\) has indeed order \(p\), as \(x^{np^{m - 1}} \in C_{p^{m}}\) commutes with \(y\) and has order \(p\) as well.

Using \cref{theo:systemOfGCDEquations}, we can find a \(\gamma \in \Z\) such that \(\gcd(\gamma, n) = 1\) and such that \(\gcd(\gamma - \alpha(y)^{i}, n) = d_{i}\) for all \(i \in \{0, \ldots, p - 1\}\). If \(m = 0\), \(\gamma\) is the desired number and we are done. So, assume that \(m \geq 1\). Then we can also obtain from \cref{theo:systemOfGCDEquations} that \(\gamma \equiv 1 + p^{e} \bmod p^{m}\). If \(p\) were to divide \(\gamma\), then \(\gamma \equiv p^{e} + 1 \bmod p^{m}\) leads to \(0 \equiv 1 \bmod p\) or \(0 \equiv 2 \bmod p\) when \(p\) is odd, and to \(0 \equiv 1 \bmod p\) when \(p = 2\), since then \(e \geq 1\). In either case, we have a contradiction, proving that \(\gcd(\gamma, p) = 1\). It follows that \(\gcd(\gamma, N) = 1\).

Next, let \(i \in \{0, \ldots, p - 1\}\). Note that
\[
	\gamma - \alpha(y)^{i} \equiv \gamma - 1 \equiv p^{e} \bmod p^{m}
\]
since \(\alpha(y) \equiv 1 \bmod p^{m}\) as \(C_{p}\) acts trivially on \(C_{p^{m}}\). Therefore, \(\gcd(\gamma - \alpha(y)^{i}, p^{m}) = p^{e}\). Consequently, \(\gcd(\gamma - \alpha(y)^{i}, N) = p^{e}d_{i}\), which proves that \(\gamma\) is the desired number.
\end{proof}
\subsection{\(\SpecR(SMC(n, m, p))\) where \(C_{p}\) acts non-trivially on \(C_{p^{m}}\)}
Let \(G\) be an \(SMC(n, m, p)\) where \(C_{p}\) acts freely on \(C_{n}\) and non-trivially on \(C_{p^{m}}\). This implies that \(m \geq 2\). If \(p\) is odd, then \(\alpha(y) \equiv \beta p^{m - 1} + 1 \bmod p^{m}\) for some \(\beta \in \{1, \ldots, p - 1\}\), by \cref{lem:ActionCpCp^m}. If \(p = 2\), there are three possibilities:
\begin{itemize}
	\item \(\alpha(y) \equiv -1 \bmod 2^{m}\)
	\item \(\alpha(y) \equiv 2^{m - 1} - 1 \bmod 2^{m}\)
	\item \(\alpha(y) \equiv 2^{m - 1} + 1 \bmod 2^{m}\)
\end{itemize}
\begin{remark}
If \(m = 2\), the second case is the trivial action and the first and third coincide. Therefore, for \(m = 2\) we will only consider the first case.
\end{remark}

Either way, \(n\) is odd and each prime factor of \(n\) is strictly greater than \(p\). A priori, it is possible that \(n = 1\). However, we will impose further restrictions on \(n\), depending on the values of \(m\) and \(p\).

\subsubsection{\(\alpha(y) \equiv 1 \bmod p^{m - 1}\) and \(n \geq 2\)}
In this section, we assume that \(\alpha(y) \equiv \beta p^{m - 1} + 1 \bmod p^{m}\) for some \(\beta \in \{1, \ldots, p - 1\}\) and that \(n \geq 2\). Moreover, if \(p = 2\), we may assume that \(m \geq 3\), as remarked before. Since \(n \geq 2\), \cref{lem:C_nandC_NcharacteristicinG} implies that \(C_{N} = \grpgen{x}\) is characteristic in \(G\). Also, \cref{prop:commutatorSubgroupSMC} implies that \([G, G] = \grpgen{x^{p^{m - 1}}}\).
%
%

\begin{prop}	\label{prop:ExpressionFork1phiSMC(nmp)}
	For \(\phi \in \Aut(G)\), write \(\phi(x) = x^{\gamma}\) for some \(\gamma \in \Z\) coprime with \(N\). Put \(e = \ord_{p}(\gcd(\gamma - 1, p^{m}))\). Then \(\ch_{1, \phi} = p^{\min\{m, e + 1\}}\).
\end{prop}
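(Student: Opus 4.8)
The plan is to invoke \cref{lem:FixedCharactersDimension1}, which reduces the count to $\ch_{1,\phi} = R(\ab{\phi})$ for the induced automorphism $\ab{\phi}$ on the abelianisation $G/\gamma_2(G)$, and then to exploit the explicit description $\gamma_2(G) = \grpgen{x^{p^{m-1}}}$ from \cref{lem:commutatorSubgroupSMC(nmp)}. Since $x^{p^{m-1}}$ has order $np$ in $C_N$, the subgroup $\gamma_2(G)$ has order $np$, so $|G/\gamma_2(G)| = p^m$, and the image $\bar x$ of $x$ has order $p^{m-1}$. As this quotient is generated by $\bar x$ of order $p^{m-1}$ and $\bar y$ of order (dividing) $p$, an order count shows $\bar y \notin \grpgen{\bar x}$ and hence $G/\gamma_2(G) \cong C_{p^{m-1}} \times C_p$. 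Because the target is abelian, \cref{prop:ReidemeisterNumberEqualsNumberOfFixedConjugacyClasses} gives $R(\ab{\phi}) = \size{\Fix(\ab{\phi})}$, so the whole computation reduces to counting fixed points of $\ab{\phi}$ on $C_{p^{m-1}} \times C_p$.

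First I would pin down $\ab{\phi}$ explicitly. Writing $\phi(x) = x^{\gamma}$ and, via \cref{lem:imageyUnderAutomorphism}, $\phi(y) = x^{a}y$, we obtain $\ab{\phi}(\bar x) = \bar x^{\gamma}$ and $\ab{\phi}(\bar y) = \bar x^{a}\bar y$. The crucial observation is that $\bar x^{a} = 1$. Since $\phi$ is an automorphism, $\phi(y)$ has order $p$, and expanding $(x^{a}y)^{p} = x^{aS}$ with $S = \sum_{i = 0}^{p - 1}\alpha(y)^{i}$ forces $aS \equiv 0 \bmod N$. One checks that $S \equiv 0 \bmod n$ (because $S(\alpha(y) - 1) = \alpha(y)^{p} - 1 \equiv 0 \bmod n$ while $\gcd(\alpha(y) - 1, n) = 1$) and, using $\alpha(y) \equiv \beta p^{m-1} + 1 \bmod p^{m}$, that $\ord_{p}(S) = 1$. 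Hence $aS \equiv 0 \bmod p^{m}$ forces $a \equiv 0 \bmod p^{m-1}$, and as $\bar x$ has order $p^{m-1}$ this gives $\bar x^{a} = 1$, so $\ab{\phi}(\bar y) = \bar y$.

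Consequently $\ab{\phi}$ is the product automorphism $(\bar x \mapsto \bar x^{\gamma},\ \bar y \mapsto \bar y)$ lying in $\Aut(C_{p^{m-1}}) \times \Aut(C_{p})$ (note $\gcd(\gamma, p) = 1$ since $\gcd(\gamma, N) = 1$). Thus \cref{prop:ReidemeisterNumberDirectProductAutomorphismGroups} together with \cref{lem:FixedPointsCyclicGroup} yields
\[
	R(\ab{\phi}) = \gcd(\gamma - 1, p^{m-1}) \cdot p.
\]
Finally I would convert this into the claimed exponent: since $e = \ord_{p}(\gcd(\gamma - 1, p^{m})) = \min\{\ord_{p}(\gamma - 1), m\}$, we have $\gcd(\gamma - 1, p^{m-1}) = p^{\min\{\ord_{p}(\gamma - 1),\, m - 1\}} = p^{\min\{e,\, m-1\}}$, whence $R(\ab{\phi}) = p^{\min\{e,\, m-1\} + 1} = p^{\min\{e + 1,\, m\}}$, exactly $p^{\min\{m,\, e + 1\}}$.

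The main obstacle is the verification that $\ord_{p}(S) = 1$ (equivalently, the constraint $a \equiv 0 \bmod p^{m-1}$), the only genuinely case-dependent step. For $p$ odd one expands $\alpha(y)^{i} \equiv 1 + i\beta p^{m-1} \bmod p^{m}$ and uses that $\binom{p}{2}$ is divisible by $p$ to get $S \equiv p \bmod p^{m}$; for $p = 2$ (where $m \geq 3$ here and $\alpha(y) \equiv 2^{m-1} + 1$) one computes $S = 2 + 2^{m-1} = 2(1 + 2^{m-2})$ directly. It is worth stressing that the weaker fact that $\ab{\phi}(\bar y)$ has order $p$ only gives $a \equiv 0 \bmod p^{m-2}$, which is insufficient to conclude $\bar x^{a} = 1$; the order-$p$ relation must be used in $G$ itself rather than in the abelianisation.
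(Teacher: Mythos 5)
Your proposal is correct and takes essentially the same approach as the paper's proof: reduce to \(R(\ab{\phi})\) on \(G/\gamma_{2}(G) \cong C_{p^{m-1}} \times C_{p}\), extract \(a \equiv 0 \bmod p^{m-1}\) from the order-\(p\) condition on \(\phi(y)\) via the computation \(\sum_{i=0}^{p-1}\alpha(y)^{\pm i} \equiv p \bmod p^{m}\) for \(p\) odd (respectively \(2 + 2^{m-1} \bmod 2^{m}\) for \(p = 2\), \(m \geq 3\)), and finish with \cref{prop:ReidemeisterNumberDirectProductAutomorphismGroups,lem:FixedPointsCyclicGroup}. The only cosmetic difference is that the paper derives the constraint on \(a\) in the quotient \(G/C_{n} \cong C_{p^{m}} \rtimes C_{p}\), whereas you work directly in \(G\) modulo \(N\) after observing \(S \equiv 0 \bmod n\); the underlying \(p\)-part computation is identical.
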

\begin{proof}
	By \cref{lem:imageyUnderAutomorphism}, \(\phi(y) = x^{a}y\) for some \(a \in \Z\). By \cref{lem:C_nandC_NcharacteristicinG}, \(\phi\) induces an automorphism on \(\frac{G}{C_{n}} \cong C_{p^{m}} \rtimes_{\bar{\alpha}} C_{p}\), with respective generators \(\bar{x}\) and \(\bar{y}\). Since \(\bar{y}\) has order \(p\), so must \(\bar{x}^{a}\bar{y}\). Therefore,
	\begin{equation}	\label{eq:EquationFora}
		\bar{1} = (\bar{x}^{a} \bar{y})^{p} = \bar{x}^{a(1 + \alpha(y)^{-1} + \ldots + \alpha(y)^{-p + 1})}.
	\end{equation}
	Here, the inverses are to be seen modulo \(N\). Now,
	\begin{align*}
		\sum_{i = 0}^{p - 1} \alpha(y)^{-i}	&\equiv \sum_{i = 0}^{p - 1} \alpha(y)^{i}	\\
									&\equiv \sum_{i = 0}^{p - 1} (\beta p^{m - 1} + 1)^{i}	\\
									&\equiv \sum_{i = 0}^{p - 1} (\beta i p^{m - 1} + 1)	\\
									&\equiv p + \beta p^{m - 1} \frac{p(p - 1)}{2}	\bmod p^{m}
	\end{align*}
	If \(p\) is odd, \(\frac{p - 1}{2} \in \Z\), therefore, the sum reduces to \(p \bmod p^{m}\). From \eqref{eq:EquationFora} it then follows that \(ap \equiv 0 \bmod p^{m}\). Therefore, \(a \equiv 0 \bmod p^{m - 1}\).  If \(p = 2\), the sum reduces to \(2 + 2^{m - 1} \bmod 2^{m}\). As we may assume for \(p = 2\) that \(m \geq 3\), we see that \(a(2 + 2^{m - 1}) \equiv 0 \bmod 2^{m}\) implies that \(a \equiv 0 \bmod 2^{m - 1}\).
	
	Now, \(\frac{G}{\gamma_{2}(G)}\) is isomorphic to \(C_{p^{m - 1}} \times C_{p}\) since \(\gamma_{2}(G) = \grpgen{x^{p^{m - 1}}}\). Denoting the projections of \(x\) and \(y\) on the abelianisation with \(\bar{x}\) and \(\bar{y}\), the induced automorphism \(\ab{\phi}\) then satisfies
	\[
		\ab{\phi}(\bar{x}) = \bar{x}^{\gamma}, \ab{\phi}(\bar{y}) = \bar{y},
	\]
	as \(\bar{x}^{a} = \bar{1}\) in \(\frac{G}{\gamma_{2}(G)}\). By \cref{prop:ReidemeisterNumberDirectProductAutomorphismGroups,lem:FixedPointsCyclicGroup}, the Reidemeister number of \(\ab{\phi}\) is given by \(\gcd(\gamma - 1, p^{m - 1}) \cdot p = p^{\min\{e, m - 1\} + 1}\). Note that \(\gamma\) is only defined modulo \(N\), hence in particular modulo \(p^{m}\), but this does not affect \(\min\{e, m - 1\}\).
\end{proof}


\begin{prop}	\label{prop:ExpressionkpphiSMC(nmp)}
	Let \(\phi \in \Aut(G)\) and write \(\phi(x) = x^{\gamma}\) for some \(\gamma \in \Z\) coprime with \(N\). Put \(e = \ord_{p}(\gcd(\gamma - 1, p^{m}))\) and put, for \(i \in \{0, \ldots, p - 1\}, d_{i} = \gcd(\gamma - \alpha(y)^{i}, N) / p^{\min\{e, m - 1\}}\). Then the \(d_{i}\)'s are pairwise coprime integers, all divide \(np\) and we have that
	\(
		\ch_{p, \phi} = p^{\min\{e - 1, m - 2\}} (\Sigma(d) - p). 
	\)
	
	Moreover, 
	\begin{enumerate}[1)]
		\item \(d_{0}\ldots d_{p - 1} \equiv 0 \bmod p\) if and only if \(e \geq m - 1\);
		\item \(d_{0} d_{1} \equiv 0 \bmod 3\) if \(p = 2\) and \(n \equiv 0 \bmod 3\).
	\end{enumerate}	
\end{prop}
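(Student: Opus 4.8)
The plan is to mirror the proof of \cref{prop:ExpressionkpphiSMC(n0p)}, starting from the specific expression \eqref{eq:specificExpressionchpphiSMC}
\[
	\ch_{p, \phi} = \frac{1}{p} \left(\sum_{i = 0}^{p - 1} \gcd(\gamma - \alpha(y)^{i}, N)\right) - \gcd(\gamma - 1, \alpha(y) - 1, p^{m}),
\]
and writing \(a_{i} := \gcd(\gamma - \alpha(y)^{i}, N)\). Since \(N = np^{m}\) with \(n\) coprime to \(p\), each \(a_{i}\) factors as \(a_{i} = p^{c_{i}} b_{i}\), where \(b_{i} := \gcd(\gamma - \alpha(y)^{i}, n)\) is its prime-to-\(p\) part and \(p^{c_{i}} := \gcd(\gamma - \alpha(y)^{i}, p^{m})\), so that \(c_{i} = \min\{\ord_{p}(\gamma - \alpha(y)^{i}), m\}\). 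The one difference from the trivial-action case is that here \(\alpha(y) \equiv \beta p^{m - 1} + 1 \bmod p^{m}\) rather than \(\alpha(y) \equiv 1 \bmod p^{m}\), so the \(p\)-parts \(p^{c_{i}}\) need no longer be constant in \(i\); tracking them is where all the work lies.

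For the prime-to-\(p\) parts I would argue exactly as before: for \(i \ne j\), \(\gcd(b_{i}, b_{j}) = \gcd(\gamma - \alpha(y)^{i}, \alpha(y)^{i - j} - 1, n)\), and since \(0 < \size{i - j} < p\) the free action forces \(\gcd(\alpha(y)^{i - j} - 1, n) = 1\); hence the \(b_{i}\) are pairwise coprime divisors of \(n\). For the \(p\)-parts the key computation is the binomial expansion \(\alpha(y)^{i} \equiv 1 + i\beta p^{m - 1} \bmod p^{m}\), valid because \(2(m - 1) \geq m\) for \(m \geq 2\) (recall \(m \geq 3\) is assumed when \(p = 2\)), so that \(\gamma - \alpha(y)^{i} \equiv (\gamma - 1) - i\beta p^{m - 1} \bmod p^{m}\). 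Reducing modulo \(p^{m - 1}\) kills the last term, whence \(\gcd(\gamma - \alpha(y)^{i}, p^{m - 1}) = \gcd(\gamma - 1, p^{m - 1}) = p^{\min\{e, m - 1\}}\) independently of \(i\). Running the same subtraction trick as in \cref{prop:ExpressionkpphiSMC(n0p)} then gives \(\gcd(a_{i}, a_{j}) = \gcd(\gamma - \alpha(y)^{i}, p^{m - 1}) = p^{\min\{e, m - 1\}}\), while the same expansion yields \(\gcd(\alpha(y) - 1, p^{m}) = p^{m - 1}\) and hence \(\gcd(\gamma - 1, \alpha(y) - 1, p^{m}) = \gcd(\gamma - 1, p^{m - 1}) = p^{\min\{e, m - 1\}}\) for the subtracted term. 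In particular \(p^{\min\{e, m - 1\}} \mid a_{i}\), so each \(d_{i} = a_{i}/p^{\min\{e, m - 1\}} = p^{c_{i} - \min\{e, m - 1\}} b_{i}\) is an integer.

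The main obstacle is the exact determination of the exponents \(c_{i}\), which I would split according to \(e\). If \(e \leq m - 2\), then \(\ord_{p}(\gamma - 1) = e < m - 1 = \ord_{p}(i\beta p^{m - 1})\) for \(i \ne 0\), so \(c_{i} = e = \min\{e, m - 1\}\) for every \(i\) and each \(d_{i} = b_{i}\) is coprime to \(p\). If \(e \geq m - 1\), write \(\gamma - 1 = p^{m - 1} u\) (so \(u\) is coprime to \(p\) when \(e = m - 1\), and \(u \equiv 0 \bmod p\) when \(e \geq m\)); then \(\gamma - \alpha(y)^{i} \equiv p^{m - 1}(u - i\beta) \bmod p^{m}\), and since \(\beta\) is coprime to \(p\) there is a unique \(i_{0} \in \{0, \ldots, p - 1\}\) with \(u \equiv i_{0}\beta \bmod p\). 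That index gives \(c_{i_{0}} = m\) and all other \(c_{i} = m - 1\), so \(d_{i_{0}} = p\,b_{i_{0}}\) carries a single factor of \(p\) and every other \(d_{i} = b_{i}\). As only one \(d_{i}\) is divisible by \(p\) and only to the first power, pairwise coprimality of the \(d_{i}\) follows from that of the \(b_{i}\), and each \(d_{i}\) divides \(np\). This case split simultaneously yields property 1): \(d_{0} \cdots d_{p - 1} \equiv 0 \bmod p\) precisely when a special index \(i_{0}\) exists, that is, precisely when \(e \geq m - 1\).

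Finally I would assemble the formula. Substituting \(a_{i} = p^{\min\{e, m - 1\}} d_{i}\) and the value of the subtracted term into \eqref{eq:specificExpressionchpphiSMC} gives
\[
	\ch_{p, \phi} = \frac{1}{p}\, p^{\min\{e, m - 1\}} \sum_{i = 0}^{p - 1} d_{i} - p^{\min\{e, m - 1\}} = p^{\min\{e, m - 1\} - 1} \sum_{i = 0}^{p - 1} (d_{i} - 1) = p^{\min\{e - 1,\, m - 2\}} \sum_{i = 0}^{p - 1} (d_{i} - 1),
\]
as claimed. When \(e \geq 1\) the exponent is non-negative; when \(e = 0\) it is \(-1\), but the identity remains valid because \(\ch_{p, \phi}\) is a genuine count of characters, which forces \(p \mid \sum_{i}(d_{i} - 1)\). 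For property 2), if \(p = 2\) and \(3 \mid n\), then by \cref{lem:fixedPointFreeActionsC2OnCyclicGroup} the action on \(C_{n}\) is by inversion, so \(\alpha(y) \equiv -1 \bmod n\) and \(b_{0} = \gcd(\gamma - 1, n)\), \(b_{1} = \gcd(\gamma + 1, n)\); since \(\gcd(\gamma, n) = 1\) forces \(\gamma \equiv \pm 1 \bmod 3\), one of \(b_{0}, b_{1}\) is divisible by \(3\), and as \(b_{i} \mid d_{i}\) this gives \(d_{0} d_{1} \equiv 0 \bmod 3\).
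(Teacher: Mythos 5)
Your proof is correct and takes essentially the same route as the paper: the same starting point \eqref{eq:specificExpressionchpphiSMC}, the same subtraction trick plus freeness of the action on \(C_{n}\) for the prime-to-\(p\) parts, the same expansion \(\alpha(y)^{i} \equiv 1 + i\beta p^{m-1} \bmod p^{m}\) for the \(p\)-parts, and the same concluding algebra and mod-\(3\) argument. Your only repackaging is cosmetic: you pin down the exponents \(c_{i}\) via a unified case split (\(e \leq m-2\) versus \(e \geq m-1\) with a unique special index \(i_{0}\) satisfying \(u \equiv i_{0}\beta \bmod p\)), whereas the paper separates \(e \leq m - 2\), \(e = m - 1\) and \(e = m\); the content is identical.
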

\begin{proof}
		The proof of the expression for \(\ch_{p, \phi}\) is almost identical to the one in \cref{prop:ExpressionFork1phiSMC(n0p)}, so we omit the details here. We will, however, prove the `moreover'-part. As before, let \(a_{i} = \gcd(\gamma - \alpha(y)^{i}, N)\). Then, note that
	\[
		\gcd(a_{i}, p^{m}) = \gcd(\gamma - \alpha(y)^{i}, p^{m}) = \gcd(\gamma - \beta ip^{m - 1} - 1, p^{m}).
	\]
	If \(e \leq m - 2\), then this equals \(p^{e}\) for each \(i\). If \(e = m\), then
	\[
		\gcd(\gamma - \beta ip^{m - 1} - 1, p^{m}) = \begin{cases} p^{m}	& \text{if } i = 0	\\
													p^{m - 1}	& \text{if } i \ne 0.
		\end{cases}
	\]
	Finally, if \(e = m - 1\), then write \(\gamma - 1 = p^{m - 1} \Gamma\) with \(\gcd(\Gamma, p) = 1\) (\(\Gamma \ne 0\) as \(\gamma \ne 1\) in that case). Then
	\[
		\gcd(\gamma - \beta ip^{m - 1} - 1, p^{m}) = p^{m - 1}\gcd(\Gamma - \beta i, p) = \begin{cases} p^{m}	& \text{if } \Gamma - \beta i \equiv 0 \bmod p	\\
													p^{m - 1}	& \text{otherwise.}
		\end{cases}
	\]
	This proves the first `moreover'-claim. The fact that \(d_{0}d_{1} \equiv 0 \bmod 3\) if \(p = 2\) and \(n \equiv 0 \bmod 3\) is proven similarly as in \cref{prop:ExpressionkpphiSMC(n0p)}.
	\end{proof}
	
\begin{theorem}	\label{theo:SpecRSMC(nmp)}
	Let \(G\) be an \(SMC(n, m, p)\) with \(n \geq 2\) and \(\alpha(y) \equiv \beta p^{m - 1} + 1 \bmod p^{m}\) for some \(\beta \in \{1, \ldots, p - 1\}\). In particular, \(m \geq 2\).
	\begin{enumerate}[1)]
	\item If \(p\) is odd, then
	\[
	\begin{split}
		\SpecR(G) = \left\{p^{e + 1} + p^{e - 1} (\Sigma(d) - p) \middlebar 0 \leq e \leq m - 1, d \in D(np, p), \right. \\ d_{1}\ldots d_{p} \equiv 0 \bmod p \iff e = m - 1\big\}.
	\end{split}
	\]
	\item If \(p = 2\), then
	\[
	\begin{split}
		\SpecR(G) = \left\{2^{e + 1} + 2^{e - 1} (\Sigma(d) - 2) \middlebar 1 \leq e \leq m - 1, d \in D(2n, 2),\right. \\ d_{1}d_{2} \equiv 0 \bmod 2 \iff e = m - 1\big\}.
	\end{split}
	\]
	\item If \(p = 2\) and \(n \equiv 0 \bmod 3\), then \(D(2n, 2)\) is to be replaced by \(D'(2n, 2)\).
	\end{enumerate}
\end{theorem}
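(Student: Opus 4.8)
The plan is to write \(R(\phi) = \ch_{1, \phi} + \ch_{p, \phi}\), which is legitimate since by \cref{lem:DimensionCharactersACp} every irreducible character of \(G\) has dimension \(1\) or \(p\), and \(R(\phi)\) equals the number of irreducible characters fixed by \(\phi\). Both summands have already been computed in terms of a single integer: for \(\phi \in \Aut(G)\) we may write \(\phi(x) = x^{\gamma}\) with \(\gcd(\gamma, N) = 1\) (as \(C_{N}\) is characteristic), set \(e = \ord_{p}(\gcd(\gamma - 1, p^{m}))\), and then \cref{prop:ExpressionFork1phiSMC(nmp)} gives \(\ch_{1, \phi} = p^{\min\{m, e + 1\}}\), while \cref{prop:ExpressionkpphiSMC(nmp)} gives \(\ch_{p, \phi} = p^{\min\{e - 1, m - 2\}}\sum_{i = 0}^{p - 1}(d_{i} - 1)\) with \(d_{i} = \gcd(\gamma - \alpha(y)^{i}, N)/p^{\min\{e, m - 1\}}\). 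The whole theorem therefore reduces to matching these two formulas against the claimed set and then showing every admissible tuple is realised.

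For the \(\subseteq\)-inclusion I would split on the value of \(e \in \{0, \dots, m\}\). If \(e \leq m - 1\), then \(\min\{m, e + 1\} = e + 1\) and \(\min\{e - 1, m - 2\} = e - 1\), so \(R(\phi) = p^{e + 1} + p^{e - 1}\sum_{i}(d_{i} - 1) = p^{e + 1} + p^{e}\sum_{i}\frac{d_{i} - 1}{p}\), which is the claimed expression with parameter \(e' = e\); the first `moreover'-part of \cref{prop:ExpressionkpphiSMC(nmp)} gives \(d_{0}\cdots d_{p - 1} \equiv 0 \bmod p \iff e \geq m - 1 \iff e' = m - 1\), exactly the side condition. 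If instead \(e = m\), then \(\min\{m, e + 1\} = m\) and \(\min\{e - 1, m - 2\} = m - 2\), so \(R(\phi) = p^{m} + p^{m - 2}\sum_{i}(d_{i} - 1)\), which is the claimed expression for the parameter value \(e' = m - 1\) (and here \(d_{0}\cdots d_{p - 1} \equiv 0 \bmod p\) automatically). Since the \(d_{i}\) are pairwise coprime divisors of \(np\) by \cref{prop:ExpressionkpphiSMC(nmp)}, and the \(p = 2\), \(3 \mid n\) refinement is the second `moreover'-claim, this settles one inclusion; note also that for \(p = 2\) coprimality of \(\gamma\) with \(N\) forces \(\gamma\) odd, hence \(e \geq 1\), matching the stated range.

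For the \(\supseteq\)-inclusion, the key observation is that \(\ch_{1, \phi}\) is independent of the \(a\) in \(\phi(y) = x^{a}y\) (see the proof of \cref{prop:ExpressionFork1phiSMC(nmp)}), so it suffices to exhibit, for each admissible tuple, a suitable \(\gamma\) and then take \(\phi(x) = x^{\gamma}\), \(\phi(y) = y\); this is an automorphism whenever \(\gcd(\gamma, N) = 1\). Given the target data I would first use \cref{theo:systemOfGCDEquations} (with base \(a = \alpha(y) \bmod n\), which has order \(p\) and satisfies \(\gcd(a, n) = \gcd(a - 1, n) = 1\) by the defining hypotheses of an \(SMC(n, m, p)\)) to produce \(\gamma_{1}\) coprime to \(n\) with \(\gcd(\gamma_{1} - \alpha(y)^{i}, n)\) equal to the \(n\)-part of \(d_{i}\); the \(p = 2\), \(3 \mid n\) hypothesis of the theorem is precisely the extra condition needed to invoke \cref{theo:systemOfGCDEquations}. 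I would then fix the \(p\)-part by Chinese remaindering \(\gamma \equiv \gamma_{1} \bmod n\) with a congruence modulo \(p^{m}\) chosen according to the regime: \(\gamma \equiv 1 + p^{e'} \bmod p^{m}\) when \(e' \leq m - 2\) (all \(d_{i}\) coprime to \(p\)); \(\gamma \equiv 1 + \beta j p^{m - 1} \bmod p^{m}\) when \(e' = m - 1\) and the unique \(p\)-divisible \(d_{j}\) has \(j \neq 0\) (forcing \(e = m - 1\)); and \(\gamma \equiv 1 \bmod p^{m}\) when that index is \(j = 0\) (forcing \(e = m\)). In each regime one checks directly that \(\gcd(\gamma - \alpha(y)^{i}, p^{m})\) takes the value making \(\gcd(\gamma - \alpha(y)^{i}, N) = p^{\min\{e, m - 1\}}d_{i}\), and that \(\gcd(\gamma, N) = 1\).

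The main obstacle is the bookkeeping in the case \(e' = m - 1\): the side condition forces exactly one \(d_{j}\) to be divisible by \(p\), and the \(p\)-adic computation \(\gcd(\gamma - \alpha(y)^{i}, p^{m}) = p^{m - 1}\gcd(\Gamma - \beta i, p)\) (with \(\gamma - 1 = p^{m - 1}\Gamma\)) shows that when \(e = m - 1\) the extra factor of \(p\) lands precisely on the index \(i \equiv \Gamma \beta^{-1} \bmod p\). Since \(\Gamma\) must be a unit modulo \(p\) in that case, this can reach every index except \(j = 0\), which is exactly why \(j = 0\) has to be realised separately through \(e = m\). Confirming that these two sub-regimes together cover all admissible tuples, and that the resulting \(d_{i}\) (after dividing by \(p^{\min\{e, m - 1\}}\)) coincide with the prescribed ones, is the delicate part; the remaining verifications are the routine gcd and coprimality checks already rehearsed in the proof of \cref{theo:SpecRSMC(n0p)}.
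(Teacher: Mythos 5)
Your proposal is correct and follows essentially the same route as the paper: the \(\subseteq\)-inclusion by combining \cref{prop:ExpressionFork1phiSMC(nmp)} and \cref{prop:ExpressionkpphiSMC(nmp)} (folding the \(e = m\) case into the parameter value \(m-1\)), and the \(\supseteq\)-inclusion by constructing \(\gamma\) from \cref{theo:systemOfGCDEquations} together with the Chinese Remainder Theorem and taking \(\phi(x) = x^{\gamma}\), \(\phi(y) = y\). The only divergence is in the bookkeeping for \(e' = m-1\): you keep the tuple \((d_{0}, \ldots, d_{p-1})\) fixed and adapt the congruence modulo \(p^{m}\) (using \(\gamma \equiv 1 \bmod p^{m}\), \ie \(e = m\), to place the factor \(p\) on \(d_{0}\)), whereas the paper keeps the single congruence \(\gamma - 1 \equiv p^{e} \bmod p^{m}\) and instead permutes the \(d_{i}\) without loss of generality --- legitimate because the Reidemeister number is symmetric in the \(d_{i}\) --- so that the \(p\)-divisible entry lands at index \(\inv{\beta} \bmod p\); both devices resolve the same issue.
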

\begin{proof}
	The \(\subseteq\)-inclusion is proven by combining \cref{prop:ExpressionFork1phiSMC(nmp)} and \cref{prop:ExpressionkpphiSMC(nmp)}, with the following remarks
	\begin{itemize}
		\item It follows from the expressions for \(\ch_{1, \phi}\) and \(\ch_{p, \phi}\) that we can assume that \(e \leq m - 1\).
		\item The condition \(d_{1}\ldots d_{p} \equiv 0 \bmod p \iff e = m - 1\) is the first `moreover'-part of \cref{prop:ExpressionkpphiSMC(nmp)}.
		\item If \(\phi \in \Aut(G)\), then \(\phi(x) = x^{\gamma}\) for some \(\gamma \in \Z\) coprime with \(N\), by \cref{lem:C_nandC_NcharacteristicinG}. If \(p = 2\), this implies that \(\gamma\) is odd, therefore, \(\gamma - 1\) is even. As the exponent \(e\) comes from \(\ord_{2}(\gamma - 1)\), this shows why \(1 \leq e \leq m - 1\) for \(p = 2\).
	\end{itemize}
	
	
For the other inclusion, let \(d_{0}, \ldots, d_{p - 1}\) be divisors of \(np\) satisfying the conditions of \cref{prop:ExpressionkpphiSMC(nmp)} and let \(e\) be an integer satisfying the given inequalities. For \(i \in \{0, \ldots, p - 1\}\), put \(d_{i}' = \frac{d_{i}}{p^{\ord_{p}(d_{i})}}\). Then \cref{theo:systemOfGCDEquations} provides us with a \(\gamma \in \Z\) satisfying \(\gcd(\gamma, n) = 1\), \(\gamma \equiv 1 + p^{e} \bmod p^{m}\) and \(\gcd(\gamma - \alpha(y)^{i}, n) = d_{i}'\) for each \(i \in \{0, \ldots, p - 1\}\). Using the same argument as in \cref{theo:SpecRSMC(n0p)}, it follows that \(\gcd(\gamma, N) = 1\).

Now, for \(\gcd(\gamma - \alpha(y)^{i}, p^{m})\), first suppose that \(e \leq m - 2\). Then none of the \(d_{i}\)'s is divisible by \(p\) meaning that \(d_{i} = d_{i}'\) for all \(i \in \{0, \ldots, p - 1\}\). Let \(0 \leq i \leq p - 1\). Since \(e \leq m - 2\), the congruences \(\gamma - 1 \equiv p^{e} \bmod p^{m}\) and \(\alpha(y) \equiv 1 \bmod p^{m - 1}\) together imply that
\[
	\gamma - \alpha(y)^{i} \equiv \gamma - 1 \equiv p^{e} \bmod p^{m - 1},
\]
showing that the highest power of \(p\) dividing \(\gamma - \alpha(y)^{i}\) is \(p^{e}\). Therefore, \(\gcd(\gamma - \alpha(y)^{i}, N) = p^{e} d_{i}\) as desired.

For \(e = m - 1\), we may assume, without loss of generality, that \(d_{\inv{\beta} \bmod p} \equiv 0 \bmod p\), meaning that \(d_{j}' = d_{j}\) for \(j \not \equiv \inv{\beta} \bmod p\) and \(d_{\inv{\beta} \bmod p} = p d_{\inv{\beta} \bmod p}'\). For \(i \in \{0, \ldots, p - 1\}\), we compute \(\gamma - \alpha(y)^{i} \bmod p^{m}\):
\begin{align*}
	\gamma - \alpha(y)^{i}	&\equiv \gamma - (\beta p^{m - 1} + 1)^{i}	\\
						&\equiv \gamma - i \beta p^{m - 1} - 1	\\
						&\equiv p^{m - 1} - i \beta p^{m - 1}	\\
						& \equiv p^{m - 1}(1 - i\beta)	\bmod p^{m},
\end{align*}
where the second congruence follows from the fact that \(m \geq 2\), and the third from the fact that \(\gamma - 1 \equiv p^{m - 1} \bmod p^{m}\). We conclude that \(\gamma - \alpha(y)^{i}\) is always divisible by \(p^{m - 1}\) and that it is divisible by \(p^{m}\) if and only if the last expression is \(0\) modulo \(p^{m}\), hence if and only if \(1 - i \beta \equiv 0 \bmod p\); in other words, if and only if \(i \equiv \inv{\beta} \bmod p\). This ensures that \(\gcd(\gamma - \alpha(y)^{i}, N) = p^{e}d_{i}\).

\cref{prop:ExpressionFork1phiSMC(nmp),prop:ExpressionkpphiSMC(nmp)} then imply that the map \(\phi: G \to G\) given by \(\phi(x) = x^{\gamma}, \phi(y) = y\) is an automorphism of \(G\) with Reidemeister number equal to \(p^{e + 1} + p^{e - 1}(\Sigma(d) - p)\).
\end{proof}
\subsubsection{\(p = 2\) and \(\alpha(y) \equiv -1 \bmod 2^{m - 1}\)}
By \cref{lem:fixedPointFreeActionsC2OnCyclicGroup}, \(C_{2}\) acts on \(C_{n}\) by inversion. From the congruences of \(\alpha(y)\) modulo \(n\) and \(2^{m - 1}\), we find that \(\alpha(y) \equiv -1 \bmod 2^{m - 1}n\). This yields two possibilities for \(\alpha(y) \bmod N\), namely \(\alpha(y) \equiv -1\) or \(2^{m - 1}n - 1 \bmod N\). In either case, \cref{prop:commutatorSubgroupSMC} yields that \([G, G] = \grpgen{x^{2}}\). We do not put any restrictions on \(n\), so \(n = 1\) is possible. Also recall that \(m \geq 2\) and if \(\alpha(y) \equiv 2^{m - 1}n - 1 \bmod N\), we may assume \(m \geq 3\). This in particular implies that \(\alpha(y) \equiv -1 \bmod 4\) in either case.

The expression for \(\SpecR(SMC(n, m, 2))\) is nearly identical in both cases. Therefore, we treat them together, pointing out the differences when they occur.
\begin{lemma}	\label{lem:CNcharacteristicCase1}
	The subgroup \(C_{N}\) is characteristic in \(G\).
\end{lemma}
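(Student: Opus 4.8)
The plan is to give an intrinsic, order-theoretic description of \(C_{N}\) that is manifestly preserved by every automorphism. Note that \cref{lem:C_nandC_NcharacteristicinG} only applies when \(n \geq 2\), whereas here \(n = 1\) is allowed (in which case \(G\) is a dihedral \(2\)-group and \(C_{N} = C_{2^{m}}\) is its rotation subgroup), so I cannot simply reuse that lemma. Instead I would show that \(C_{N}\) coincides with the subgroup generated by all elements of \(G\) of order at least \(3\). Since any automorphism preserves the order of an element, the subgroup generated by the set of all elements of order \(\geq 3\) is automatically characteristic, and this gives the result at once.

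First I would verify that every element lying outside \(C_{N}\) is an involution. As \([G : C_{N}] = 2\), such an element lies in the coset \(C_{N}y\) and can be written as \(x^{i}y\) for some \(i\). Using \(\alpha(y) \equiv -1 \bmod N\), that is \(y x^{i} y^{-1} = x^{-i}\), together with \(y^{2} = 1\), one computes \((x^{i}y)^{2} = x^{i}(y x^{i} y^{-1})y^{2} = x^{i}x^{-i} = 1\). Since \(x^{i}y \ne 1\), it has order exactly \(2\). Consequently every element of \(G\) of order at least \(3\) must lie in \(C_{N}\), so the subgroup \(K\) generated by all such elements satisfies \(K \leq C_{N}\).

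For the reverse inclusion I would use that \(m \geq 2\) forces \(N = n2^{m} \geq 4\), so the generator \(x\) of \(C_{N}\) has order \(N \geq 3\) and therefore belongs to \(K\). As \(\grpgen{x} = C_{N}\), this yields \(C_{N} \leq K\), whence \(K = C_{N}\). Being generated by an automorphism-invariant set, \(K\) is characteristic, and the statement follows.

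I do not expect a serious obstacle; the only point requiring care is ensuring that elements of order \(\geq 3\) actually exist, which is exactly where the standing hypothesis \(m \geq 2\) enters. For \(m = 1\) and \(n = 1\) the group would be the Klein four-group, where this characterisation breaks down, consistently with that case being excluded throughout this subsection.
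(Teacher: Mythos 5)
Your proof is correct and follows essentially the same route as the paper: both rest on the computation \((x^{a}y)^{2} = 1\), showing every element outside \(C_{N}\) is an involution, and then use that \(x\) has order \(N \geq 4 > 2\) so that order-preservation under automorphisms pins down \(C_{N}\). Your packaging via the subgroup generated by all elements of order at least \(3\) is a minor cosmetic variant of the paper's phrasing that every element of the same order as \(x\) lies in \(C_{N}\).
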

\begin{proof}
	Let \(a \in \Z\). Then
	\[
		(x^{a}y)^{2} = x^{a}yx^{a}y = x^{a} x^{\alpha(y)a} = x^{a(1 + \alpha(y))}.
	\]
	If \(\alpha(y) \equiv -1 \bmod 2^{m}\), the element \(x^{a}y\) has order \(2\), whereas \(x\) has order \(n \cdot 2^{m} > 2\). If \(\alpha(y) \equiv 2^{m - 1} - 1 \bmod 2^{m}\), the element \(x^{a}y\) has order either \(2\) or \(4\), whereas \(x\) has order \(n \cdot 2^{m} > 4\) (recall that \(m \geq 3\) in that case). Either way, every element of the same order as \(x\) lies in \(C_{N}\), showing that \(C_{N}\) is characteristic.
\end{proof}
\begin{prop}	\label{prop:k1phiSMC(nm2)Case1}
	Let \(\phi\) be an automorphism of \(G\). Then \(\ch_{1, \phi} \in \{2, 4\}\). Moreover, if \(\alpha(y) \equiv 2^{m - 1} - 1 \bmod 2^{m}\), then \(\ch_{1, \phi} = 4\).
\end{prop}
\begin{proof}
	By \cref{lem:CNcharacteristicCase1}, we know that \(\phi(x) = x^{\gamma}\) for some odd \(\gamma \in \Z\). Consequently, \(\ab{\phi}(\bar{x}) = \bar{x}\), as \([G, G] = \grpgen{x^{2}}\), proving that \(\ab{\phi}\) has at least \(2\) fixed points. Since \(\ab{G}\) is abelian and has size \(4\), we conclude that \(\ch_{1, \phi} = R(\ab{\phi}) \in \{2, 4\}\).
	
	Now assume \(\alpha(y) \equiv 2^{m - 1} - 1 \bmod 2^{m}\). Writing \(\phi(y) = x^{c}y^{d}\), we see that \(d = 1\), otherwise \(y \notin \im(\phi)\). The element \(\phi(y)\) must have order \(2\), thus
	\[
		1 = (x^{c}y)^{2} = x^{2^{m - 1}nc}
	\]
	showing that \(c \equiv 0 \bmod 2\). Consequently, \(\ab{\phi}(\bar{y}) = \bar{y}\). We conclude that \(\ab{\phi}\) is the identity map, therefore, \(\ch_{1, \phi} = R(\ab{\phi}) = 4\).
\end{proof}
\begin{prop}	\label{prop:k2phiSMC(nm2)Case1}
	Let \(\phi \in \Aut(G)\). Then there exist coprime positive integers \(d_{0}, d_{1}\) dividing \(n\) and an integer \(e \geq 1\) such that
	\[
		\ch_{2, \phi} = 2^{e}d_{0} + d_{1} - 2.
	\]
	Moreover, if \(n \equiv 0 \bmod 3\), then \(d_{0} d_{1} \equiv 0 \bmod 3\).
\end{prop}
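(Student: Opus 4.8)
The plan is to specialise the general formula \eqref{eq:specificExpressionchpphiSMC} to this case. Since $C_{N}$ is characteristic by \cref{lem:CNcharacteristicCase1}, any $\phi \in \Aut(G)$ satisfies $\phi(x) = x^{\gamma}$ for some odd $\gamma$ coprime with $N$. With $p = 2$ and $\alpha(y) \equiv -1 \bmod N$, the formula becomes
\[
	\ch_{2, \phi} = \tfrac{1}{2}\bigl(\gcd(\gamma - 1, N) + \gcd(\gamma + 1, N)\bigr) - \gcd(\gamma - 1, \alpha(y) - 1, 2^{m}).
\]
First I would dispose of the subtracted term: since $\gcd(\alpha(y) - 1, 2^{m}) = \gcd(-2, 2^{m}) = 2$ and $\gamma - 1$ is even, the triple gcd equals $\gcd(\gamma - 1, 2) = 2$. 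This reduces everything to understanding $\gcd(\gamma \pm 1, N)$.

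Next, as $N = n\, 2^{m}$ with $n$ odd, each of these gcds factors as the product of its $n$-part and its $2^{m}$-part. I would set $d_{0} = \gcd(\gamma - 1, n)$ and $d_{1} = \gcd(\gamma + 1, n)$; both divide $n$, and they are coprime since $\gcd(d_{0}, d_{1})$ divides $\gcd(\gamma - 1, \gamma + 1)$, which divides $2$, while simultaneously dividing the odd number $n$. The crux is the $2$-adic analysis of $\gcd(\gamma - 1, 2^{m})$ and $\gcd(\gamma + 1, 2^{m})$: because $\gamma$ is odd, $(\gamma - 1)(\gamma + 1) = \gamma^{2} - 1 \equiv 0 \bmod 8$ while $\gamma - 1$ and $\gamma + 1$ differ by $2$, so exactly one of them has $2$-adic valuation $1$ and the other valuation at least $2$. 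Using $m \geq 2$, after possibly interchanging the labels $d_{0} \leftrightarrow d_{1}$ I may assume $\gcd(\gamma - 1, 2^{m}) = 2^{e + 1}$ for some $e \geq 1$ and $\gcd(\gamma + 1, 2^{m}) = 2$.

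Substituting these factorisations into the displayed formula then gives
\[
	\ch_{2, \phi} = \tfrac{1}{2}\bigl(2^{e + 1} d_{0} + 2 d_{1}\bigr) - 2 = 2^{e} d_{0} + d_{1} - 2,
\]
which is precisely the claimed expression, with $e \geq 1$ and $d_{0}, d_{1}$ coprime divisors of $n$. For the final `moreover' statement I would reuse the argument from the earlier propositions: if $3 \mid n$ then $\gcd(\gamma, 3) = 1$, so $\gamma \equiv \pm 1 \bmod 3$; hence $3$ divides one of $\gamma - 1$, $\gamma + 1$, and therefore one of $d_{0}, d_{1}$, giving $d_{0} d_{1} \equiv 0 \bmod 3$.

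The hard part will be the $2$-adic bookkeeping in the second paragraph: one must track precisely which of $\gamma \pm 1$ carries the higher power of $2$ and check that the surviving exponent is always at least $1$, as this asymmetry is exactly what produces the shape $2^{e} d_{0} + d_{1} - 2$ (with the large power of $2$ attached to a single factor) rather than a symmetric expression. The remaining manipulations are routine factorisations of greatest common divisors across the coprime parts $n$ and $2^{m}$ of $N$.
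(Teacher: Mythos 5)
Your proposal is correct, and it is essentially the paper's own proof: same specialisation of \eqref{eq:specificExpressionchpphiSMC} via \cref{lem:CNcharacteristicCase1}, same evaluation of the subtracted term as \(2\), same splitting of each gcd into its \(n\)-part and \(2^{m}\)-part, and the same observation that exactly one of \(\gamma \pm 1\) is divisible by \(4\). The single point of divergence is how the asymmetry is normalised: the paper replaces \(\phi\) by \(\tau_{y} \circ \phi\) (which sends \(x\) to \(x^{-\gamma}\)) and invokes \cref{lem:ReidemeisterNumberCompositionInnerAutomorphism} to assume \(4 \mid \gamma - 1\), whereas you exploit the symmetry of \(\tfrac{1}{2}\left(\gcd(\gamma - 1, N) + \gcd(\gamma + 1, N)\right) - 2\) in the two gcds and simply attach the label \(d_{0}\) to the odd part of whichever of them carries the higher power of \(2\). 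Since the proposition only asserts the \emph{existence} of \(d_{0}, d_{1}, e\), your relabelling is legitimate, and it is arguably cleaner: the paper's reduction quietly needs the fact that composing with an inner automorphism preserves not just \(R(\phi)\) but the set of fixed irreducible characters (true, because characters are class functions, but not literally what the cited lemma states), while your route never has to touch that issue. The paper's normalisation does pay off later, in \cref{theo:SpecRSMC(nm2)Case1}, where having the large power of \(2\) pinned to \(\gamma - 1\) matches the form of the \(\gamma\) constructed for the realisation direction, but for the proposition itself nothing is lost by your argument.
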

\begin{proof}
	By \cref{lem:CNcharacteristicCase1}, we can write \(\phi(x) = x^{\gamma}\) for some \(\gamma\) coprime with \(N\).
	
	Again, we can use \eqref{eq:specificExpressionchpphiSMC} to determine \(\ch_{2, \phi}\), since \(C_{N}\) is characteristic. The expression reads
	\[
		\ch_{2, \phi} = \frac{\gcd(\gamma - 1, N) + \gcd(\gamma - \alpha(y), N)}{2} - \gcd(\gamma - 1, \alpha(y) - 1, 2^{m}).
	\]
	The last term equals \(2\), as \(\alpha(y) - 1 \equiv 2 \bmod 4\) and \(\gamma\) is odd. For the first two terms, remark that exactly one of the numbers \(\gamma - 1\) or \(\gamma - \alpha(y)\) is a multiple of \(4\). Indeed, both are even andtheir difference \(-\alpha(y) + 1\) is congruent to \(2\) modulo \(4\). The other is then equal to \(2a\) for some odd number \(a\). We may assume that \(\gamma - 1\) is a multiple of \(4\), by noting that \(\tau_{y} \circ \phi\) is an automorphism mapping \(x\) to \(x^{\alpha(y)\gamma}\) that has the same Reidemeister number as \(\phi\) by \cref{lem:ReidemeisterNumberCompositionInnerAutomorphism}.
	
	Thus, write \(\gcd(\gamma - 1, N) = 2^{f} d_{0}\) and \(\gcd(\gamma - \alpha(y), N) = 2d_{1}\) with \(f \geq 2\) and \(d_{0}, d_{1}\) both dividing \(n\). We get
	\begin{align*}
		\ch_{2, \phi}	&= \frac{2^{f}d_{0} + 2d_{1}}{2} - 2	\\
					&= 2^{f - 1}d_{0} + d_{1} - 2.
	\end{align*}
	Recall that \(n\) is odd, hence so are \(d_{0}\) and \(d_{1}\). Thus, \(d := \gcd(d_{0}, d_{1})\) is an odd divisor of \(n\) dividing both \(\gamma - 1\) and \(\gamma - \alpha(y)\). Hence, \(d\) divides \(\alpha(y) - 1\), which is coprime with \(n\), so \(d\) must be \(1\). Putting \(e = f - 1\) yields the desired expression for \(\ch_{2, \phi}\). The `moreover'-part is again proven similarly as in \cref{prop:ExpressionkpphiSMC(n0p)}.
\end{proof}

\begin{theorem}	\label{theo:SpecRSMC(nm2)Case1}
	Let \(G\) be an \(SMC(n, m, 2)\) where \(\alpha(y) \equiv -1 \bmod 2^{m - 1}n\) and \(m \geq 2\). 
	\begin{enumerate}[1)]
		\item If \(\alpha(y) \equiv -1 \bmod N\) and \(n \not\equiv 0 \bmod 3\), then
		\[
			\SpecR(G) = \{2, 4\} + \{2^{e}d_{0} + d_{1} - 2 \mid 1 \leq e \leq m - 1, (d_{0}, d_{1}) \in D(n, 2)\}.
		\]
		\item If \(\alpha(y) \equiv 2^{m - 1}n - 1 \bmod N\) (and thus \(m \geq 3\)) and \(n \not \equiv 0 \bmod 3\), then
		\[
			\SpecR(G) = \{2^{e}d_{0} + d_{1} + 2 \mid 1 \leq e \leq m - 1, (d_{0}, d_{1}) \in D(n, 2)\}.
		\]
		\item If \(n \equiv 0 \bmod 3\), then \(D(n, 2)\) is to be replaced by \(D'(n, 2)\) in both cases.
	\end{enumerate}
\end{theorem}
\begin{proof}
	Combining \cref{prop:k1phiSMC(nm2)Case1} and \cref{prop:k2phiSMC(nm2)Case1} yields the inclusion
	\[
		\SpecR(G) \subseteq \{2, 4\} + \{2^{e}d_{0} + d_{1} - 2 \mid 1 \leq e \leq m - 1, (d_{0}, d_{1}) \in D(n, 2)\}
	\]
	for the first case,
	\[
		\SpecR(G) \subseteq \{4\} + \{2^{e}d_{0} + d_{1} - 2 \mid 1 \leq e \leq m - 1, (d_{0}, d_{1}) \in D(n, 2)\}
	\]
	for the second case and the same with \(D'(n, 2)\) instead of \(D(n, 2)\) if \(n \equiv 0 \bmod 3\).
	
	For the other inclusion, we have to solve a similar problem as before: given \(2\) divisors \(d_{0}, d_{1}\) of \(n\) satisfying the conditions of \cref{prop:k2phiSMC(nm2)Case1} and an integer \(e\) satisfying \(1 \leq e \leq m - 1\), find an integer \(\gamma \in \Z\) coprime with \(N\) such that
\[
	\gcd(\gamma - 1, N) = 2^{e + 1}d_{0} \text{ and } \gcd(\gamma - \alpha(y), N) = 2d_{1}.
\]
Indeed, by \cref{prop:k1phiSMC(nm2)Case1,prop:k2phiSMC(nm2)Case1}, the map \(\phi: G \to G\) given by \(\phi(x) = x^{\gamma}, \phi(y) = y\) is then an automorphism of \(G\) with Reidemeister number equal to
\[
	4 + 2^{e} d_{0} + d_{1} - 2
\]
and if \(\alpha(y) \equiv -1 \bmod N\), \(\psi: G \to G\) given by \(\psi(x) = x^{\gamma}, \psi(y) = xy\) is one with Reidemeister number
\[
	2 + 2^{e} d_{0} + d_{1} - 2.
\]

\cref{theo:systemOfGCDEquations} provides us with a \(\gamma \in \Z\) satisfying \(\gcd(\gamma, n) = 1\), \(\gcd(\gamma - 1, n) = d_{0}\), \(\gcd(\gamma - \alpha(y), n) = d_{1}\) and \(\gamma \equiv 1 + 2^{e + 1} \bmod 2^{m}\). For \(\gcd(\gamma, 2)\), observe that
\[
	\gamma \equiv 2^{e + 1} + 1 \bmod 2^{m}
\]
which becomes \(\gamma \equiv 1 \bmod 2\), proving that \(\gcd(\gamma, 2) = 1\). Therefore, \(\gcd(\gamma, N) = 1\).

Clearly, \(\gcd(\gamma - 1, 2^{m}) = 2^{e + 1}\). Since \(e \geq 1\) and \(\alpha(y) \equiv 3 \bmod 4\), we find that
\[
	\gamma - \alpha(y) \equiv \gamma - 3 \equiv 2 \bmod 4.
\]
Therefore, \(\gcd(\gamma - \alpha(y), 2^{m}) = 2\). As \(\gcd(\gamma - \alpha(y)^{i}, n) = d_{i}\), we deduce that \(\gcd(\gamma - 1, N) = 2^{e + 1} d_{0}\) and \(\gcd(\gamma -\alpha(y), N) = 2 d_{1}\).
\end{proof}
\begin{remark}
	As we did not put any restrictions on \(n\), \cref{theo:SpecRSMC(nm2)Case1} also fully determines the Reidemeister spectrum of the groups \(C_{2^{m}} \rtimes_{\alpha} C_{2}\) where \(\alpha(y) \equiv -1 \bmod 2^{m - 1}\).
\end{remark}
The last remaining case is \(\alpha(y) \equiv 1 \bmod p^{m - 1}\) and \(n = 1\), which we treat in the next section.
\subsection{\(\SpecR(SMC(1, m, p))\) with \(m \geq 2\) and non-trivial action}

Here we deal with \(p\)-groups of the form \(C_{p^{m}} \rtimes_{\alpha} C_{p}\), where we assume that \(\alpha(y) \not \equiv 1 \bmod p^{m}\). In particular, \(m \geq 2\). We distinguish two cases: \(\alpha(y) \equiv 1 \bmod p^{m - 1}\) and \(\alpha(y) \not \equiv 1 \bmod p^{m - 1}\). By \cref{lem:ActionCpCp^m}, the latter can only occur if \(p = 2\). However, by the remark following the proof of \cref{theo:SpecRSMC(nm2)Case1}, the case \(\alpha(y) \not\equiv 1 \bmod 2^{m - 1}\) has already been dealt with. Therefore, we may assume that \(\alpha(y) \equiv 1 \bmod p^{m - 1}\).

We will not be able to use \cref{prop:generalExpressionchpphi} nor \eqref{eq:specificExpressionchpphiSMC}, since \(C_{p^{m}}\) is not necessarily characteristic in \(G\); for instance, this is the case for \(C_{p^{2}} \rtimes C_{p}\). We can, however, still use the technique of character counting, but we will need another approach.

By \cref{prop:commutatorSubgroupSMC}, we already know the commutator subgroup of \(G\), namely \(\grpgen{x^{p^{m - 1}}}\). By \cite[Corollary~3.1]{GolasinskiGoncalves09}, the centre of \(G\) is given by \(\grpgen{x^{p}}\).
\begin{lemma}	\label{lem:k1phiSMC(1mp)}
	Let \(\phi \in \Aut(G)\) and write \(\phi(x) = x^{a}y^{b}\), \(\phi(y) = x^{c}y^{d}\). Then
	\begin{enumerate}[1)]
		\item \(c \equiv 0 \bmod p^{m - 1}\), \(\gcd(a, p) = 1\) and \(d \equiv 1 \bmod p\);
		\item \(\ch_{1, \phi} \in \{p^{i} \mid 1 \leq i \leq m\}\) if \(p\) is odd, \(\ch_{1, \phi} \in \{2^{i} \mid 2 \leq i \leq m\}\) if \(p = 2\);
		\item \(\ch_{1, \phi} = p^{m}\) if and only if \(a \equiv 1 \bmod p^{m - 1}\) and \(b \equiv 0 \bmod p\).
	\end{enumerate}
\end{lemma}
\begin{proof}
	For the first item, we refer the reader to \cite[Proposition~1]{Schulte01}.

	For the second item, since \(\Fix(\ab{\phi})\) is a subgroup of \(\ab{G}\), it follows that \(\ch_{1, \phi} = R(\ab{\phi})\) divides \(p^{m}\). We furthermore note that \(\ab{\phi}(\bar{y}^{i}) = \bar{x}^{ci}\bar{y}^{i} = \bar{y}^{i}\) for all \(i \in \{0, \ldots, p - 1\}\), as \(c \equiv 0 \bmod p^{m - 1}\) and \(d \equiv 1 \bmod p\). Therefore, \(\ab{\phi}\) has at least \(p\) fixed points, implying that \(R(\ab{\phi}) = \size{\Fix(\ab{\phi})} \geq p\). If \(p = 2\), we also have
	\[
		\ab{\phi}(\bar{x}^{2^{m - 2}}) = \bar{x}^{2^{m - 2}}
	\]
	as \(a \equiv 1 \bmod 2\), \(y\) has order \(2\) and \(m \geq 3\) in that case. This implies that \(\ch_{1, \phi} \geq 4\) if \(p = 2\).
	The final item follows immediately from the fact that \(\ch_{1, \phi} = p^{m}\) if and only if \(\ab{\phi}\) is the identity map.
\end{proof}

Now, if \(\chi_{a}\) is a character on \(C_{p^{m}}\) inducing a \(1\)-dimensional character on \(G\), then
\[
	\chi_{a}(x^{e}) = \chi_{a}(x^{\alpha(y)^{i}e})
\]
for all \(e \in \{0, \ldots, p^{m} - 1\}\) and \(i \in \{0, \ldots, p - 1\}\). This can only hold if \(\chi_{a} = \chi_{a \alpha(y)^{i}}\) for all \(i \in \{0, \ldots, p - 1\}\), \ie if and only if \(a(\alpha(y)^{i} - 1) \equiv 0 \bmod p^{m}\). As \(\alpha(y) \equiv \beta p^{m - 1} + 1 \bmod p^{m}\) for some \(\beta \in \{1, \ldots, p - 1\}\), this is equivalent to \(a \equiv 0 \bmod p\). Therefore, the characters \(\chi_{a}\) on \(C_{p^{m}}\) inducing a \(p\)-dimensional character on \(G\) are those for which \(\gcd(a, p) = 1\).

\begin{lemma}	\label{lem:charactersSMC(1mp)}
	Let \(\overline{\chi}_{l} \in \Irr_{p}(G)\) be a character with \(\gcd(l, p) = 1\) and let \(\zeta = \exp\left(\frac{2\pi i}{p^{m}}\right)\) be a primitive \(p^{m}\)-th root of unity. Write \(\alpha(y) \equiv \beta p^{m - 1} + 1 \bmod p^{m}\) for a \(\beta \in \{1, \ldots, p - 1\}\). For \(e \in \{0, \ldots, p^{m} - 1\}\), we have
	\[
		\overline{\chi}_{l}(x^{e}) = \zeta^{le} \sum_{j = 0}^{p - 1} \zeta^{le \beta j p^{m - 1}} = \begin{cases}
			0	&	\mbox{if } \gcd(e, p) = 1	\\
			p \zeta^{le}	&	\mbox{if } \gcd(e, p) = p.
		\end{cases}
	\]
	In particular, \(\overline{\chi}_{l}\) is zero outside \(Z(G)\).
\end{lemma}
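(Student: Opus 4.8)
The plan is to apply \cref{lem:pCharacterACp} directly, since $\overline{\chi}_{l}$ is the $p$-dimensional character of $G$ induced by the character $\chi_{l}$ of $A = \grpgen{x} = C_{p^{m}}$. As $x^{e}$ lies in $A$, that lemma gives $\overline{\chi}_{l}(x^{e}) = \sum_{i = 0}^{p - 1}\chi_{l}(\alpha(y)^{i}(x^{e}))$. Choosing the generator $\chi_{1}$ of $\dual{C_{p^{m}}}$ so that $\chi_{1}(x) = \zeta$, we have $\chi_{l}(x^{e}) = \zeta^{le}$, and since $\alpha(y)^{i}$ acts on $A$ by $x \mapsto x^{\alpha(y)^{i}}$, the summand becomes $\chi_{l}(x^{\alpha(y)^{i}e}) = \zeta^{l\alpha(y)^{i}e}$. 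Thus $\overline{\chi}_{l}(x^{e}) = \sum_{i = 0}^{p - 1}\zeta^{l\alpha(y)^{i}e}$, and it remains only to simplify the exponents.

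Next I would reduce $\alpha(y)^{i}$ modulo $p^{m}$. Writing $\alpha(y) \equiv \beta p^{m - 1} + 1 \bmod p^{m}$ and expanding by the binomial theorem, every term $\binom{i}{k}\beta^{k}p^{k(m - 1)}$ with $k \geq 2$ vanishes modulo $p^{m}$, because $k(m - 1) \geq 2(m - 1) \geq m$ whenever $m \geq 2$. Hence $\alpha(y)^{i} \equiv i\beta p^{m - 1} + 1 \bmod p^{m}$, and since $\zeta$ has order $p^{m}$ the multiple of $p^{m}$ hidden in this congruence contributes nothing to $\zeta^{l\alpha(y)^{i}e}$. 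Substituting therefore yields $\zeta^{l\alpha(y)^{i}e} = \zeta^{le}\zeta^{le\beta i p^{m - 1}}$, and factoring out $\zeta^{le}$ (and renaming the summation index $j$) gives the first claimed equality $\overline{\chi}_{l}(x^{e}) = \zeta^{le}\sum_{j = 0}^{p - 1}\zeta^{le\beta j p^{m - 1}}$.

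The final computational step is to evaluate this inner sum, which is the only place a case distinction enters. I would observe that $\zeta^{p^{m - 1}} = \exp(2\pi i/p)$ is a primitive $p$-th root of unity $\omega$, so the sum equals $\sum_{j = 0}^{p - 1}(\omega^{le\beta})^{j}$, a geometric series whose ratio $\omega^{le\beta}$ is a $p$-th root of unity. This ratio equals $1$ precisely when $le\beta \equiv 0 \bmod p$; since $\gcd(l, p) = 1$ by hypothesis and $\gcd(\beta, p) = 1$ as $\beta \in \{1, \ldots, p - 1\}$, this happens exactly when $p \mid e$, i.e.\ when $\gcd(e, p) = p$. In that case the sum is $p$ and $\overline{\chi}_{l}(x^{e}) = p\zeta^{le}$; otherwise the series telescopes to $0$ because $(\omega^{le\beta})^{p} = 1$, giving $\overline{\chi}_{l}(x^{e}) = 0$. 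This is exactly the stated piecewise formula.

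For the closing remark I would combine this with \cref{lem:pCharacterACp}, which already gives $\overline{\chi}_{l}(g) = 0$ for $g \notin A$, and \cref{lem:centreOfSMC(1mp)Case1}, which identifies $Z(G) = \grpgen{x^{p}}$: within $A$ the character vanishes unless $p \mid e$, that is, unless $x^{e} \in \grpgen{x^{p}} = Z(G)$, so $\overline{\chi}_{l}$ indeed vanishes off the centre. I do not expect any genuine obstacle here, as the argument is essentially a routine computation; the only points demanding care are the binomial reduction, which is exactly where the hypothesis $m \geq 2$ is used, and correctly pinning down when the geometric sum fails to vanish.
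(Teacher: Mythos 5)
Your proof is correct and follows essentially the same route as the paper's: apply \cref{lem:pCharacterACp}, reduce \(\alpha(y)^{i} \equiv i\beta p^{m-1} + 1 \bmod p^{m}\), evaluate the resulting sum of \(p\)-th roots of unity by cases on \(\gcd(e,p)\), and combine with \(Z(G) = \grpgen{x^{p}}\) for the final claim. The only differences are presentational: you spell out the binomial reduction and phrase the root-of-unity sum as a geometric series, where the paper treats the reduction implicitly and notes directly that the exponents sweep out all \(p\)-th roots of unity.
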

\begin{proof}
	On \(C_{p^{m}}\), the character \(\chi_{l}\) satisfies \(\chi_{l}(x^{e}) = \zeta^{le}\) for all \(e \in \{0, \ldots, p^{m} - 1\}\). Now let \(e \in \{0, \ldots, p^{m} - 1\}\). We compute \(\overline{\chi}_{l}(x^{e})\) using \cref{lem:pCharacterACp}:
	\begin{align*}
		\overline{\chi}_{l}(x^{e})	&=	\sum_{j = 0}^{p - 1} \chi_{l}(x^{e\alpha(y)^{j}})	\\
							&=	\sum_{j = 0}^{p - 1} \chi_{l}\left(x^{e(\beta j p^{m - 1} + 1)}\right)	\\
							&=	\sum_{j = 0}^{p - 1} \zeta^{le \beta j p^{m - 1} + le}	\\
							&=	\zeta^{le}\sum_{j = 0}^{p - 1} \zeta^{le \beta j p^{m - 1}}.
	\end{align*}
	Recall that \(\beta\) is coprime with \(p\). Thus, if \(\gcd(e, p) = 1\), the last sum equals \(1 + \zeta^{p^{m - 1}} + \ldots + \zeta^{(p - 1)p^{m - 1}}\), which is zero. If \(\gcd(e, p) = p\), the last sum equals \(p\), as \(\zeta^{p^{m}} = 1\). 

	Since \(Z(G) = \grpgen{x^{p}}\) and \(\overline{\chi}_{l}\) is already zero outside \(C_{p^{m}}\), we conclude that \(\overline{\chi}_{l}\) is zero outside \(Z(G)\).
\end{proof}

\begin{prop}	\label{prop:kpphiSMC(1mp)}
	Let \(\phi \in \Aut(G)\) and \(\overline{\chi}_{a}\) be a character in \(\Irr_{p}(G)\). If \(\overline{\chi}_{a}\) is fixed by \(\phi\), then all characters in \(\Irr_{p}(G)\) are fixed by \(\phi\).
	
	In particular, \(\ch_{p, \phi}(G) \in \{0, p^{m - 2}(p - 1)\}\).
\end{prop}
\begin{proof}
	Let \(\overline{\chi}_{a}\) be a character in \(\Irr_{p}(G)\). By \cref{lem:charactersSMC(1mp)}, we know that \(\overline{\chi}_{a}\) is zero outside \(Z(G)\). As \(Z(G)\) is characteristic in \(G\), also \(\phi(G \setminus Z(G)) = G \setminus Z(G)\), implying that \(\overline{\chi}_{a} \circ \phi\) is also zero outside \(Z(G)\). Since \(Z(G) = \grpgen{x^{p}}\), we can write \(\phi(x^{p}) = x^{\gamma p}\) for some \(\gamma \in \Z\) coprime with \(p\). Then, using \cref{lem:charactersSMC(1mp)} again, we find that
	\[
		\overline{\chi}_{a} \circ \phi = \overline{\chi}_{a} \iff \forall e \in \Z: p \zeta^{ape} = p \zeta^{a \gamma p e},
	\]
	where \(\zeta = \exp\left(\frac{2\pi i}{p^{m}}\right)\) is a primitive \(p^{m}\)-th root of unity. The last condition is clearly equivalent with \(\zeta^{ap(\gamma - 1)} = 1\).

	So, suppose that \(\overline{\chi}_{a} \circ \phi = \overline{\chi}_{a}\). Then \(\zeta^{ap(\gamma - 1)} = 1\). Let \(b \in \{0, \ldots, p^{m} - 1\}\) be coprime with \(p\). Then there exists a \(c \in \Z\) such that \(ac \equiv b \bmod p^{m}\). Consequently,
	\[
		\zeta^{bp(\gamma - 1)} = \zeta^{acp(\gamma - 1)} = \left(\zeta^{ap(\gamma - 1)}\right)^{c} = 1,
	\]
	proving that \(\overline{\chi}_{b} \circ \phi = \overline{\chi}_{b}\).
	
	Thus, either all or none of the characters in \(\Irr_{p}(G)\) are fixed by \(\phi\). Since
	\[
		\size{G} = \ch_{1} + p^{2}\ch_{p} = \size{\ab{G}} + p^{2}\ch_{p} = p^{m} + p^{2}\ch_{p},
	\]
	we find that \(\ch_{p} = \frac{p^{m + 1} - p^{m}}{p^{2}} = p^{m - 1} - p^{m - 2} = p^{m - 2}(p - 1)\). Thus, either \(\ch_{p, \phi} = 0\) or \(\ch_{p, \phi} = p^{m - 2}(p - 1)\).
\end{proof}

\begin{theorem}
	Let \(G\) be an \(SMC(1, m, p)\) with \(m \geq 2\) if \(p\) is odd, \(m \geq 3\) if \(p = 2\), and \(\alpha(y) \equiv \beta p^{m - 1} + 1 \bmod p^{m}\) for some \(\beta \in \{1, \ldots, p - 1\}\).
	\begin{enumerate}[1)]
		\item If \(p\) is odd, then
		\[
			\SpecR(G) = \{p^{i} \mid 1 \leq i \leq m - 1\} \cup \{2p^{m - 1} - p^{m - 2}, p^{m} + p^{m - 1} - p^{m - 2}\}.
		\]
		\item If \(p = 2\), then
		\[
			\SpecR(G) = \{2^{i} \mid 2 \leq i \leq m - 1\} \cup \{2^{m} - 2^{m - 2}, 2^{m} + 2^{m - 2}\}.
		\]
	\end{enumerate}
\end{theorem}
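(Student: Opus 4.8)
The plan is to work from the identity $R(\phi) = \ch_{1, \phi} + \ch_{p, \phi}$, which holds because every irreducible character of $G$ has dimension $1$ or $p$ by \cref{lem:DimensionCharactersACp} and $R(\phi)$ counts all fixed irreducible characters. \cref{prop:k1phiSMC(1mp)} constrains $\ch_{1,\phi}$ to a power of $p$ and tells us exactly when it equals $p^m$, while \cref{cor:kpphiSMC(1mp)} gives $\ch_{p,\phi} \in \{0, p^{m-2}(p-1)\}$. The whole problem thus reduces to deciding, for each $\phi$, which of the two values $\ch_{p,\phi}$ takes and pairing it with the correct value of $\ch_{1,\phi}$.

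First I would make the criterion for $\ch_{p,\phi} \neq 0$ explicit. Writing $\phi(x) = x^{a}y^{b}$, the proof of \cref{cor:kpphiSMC(1mp)} shows $\ch_{p,\phi} = p^{m-2}(p-1)$ precisely when $\gamma \equiv 1 \bmod p^{m-1}$, where $\phi(x^{p}) = x^{\gamma p}$ on the centre $Z(G) = \grpgen{x^{p}}$. Using \cref{lem:powersOfxy} to expand $(x^{a}y^{b})^{p}$, I would compute $\gamma$ in terms of $a$ and $b$: for $p$ odd the inner sum $\sum_{i=0}^{p-1}\alpha(y)^{-ib}$ reduces to $p$ for every $b$, so $\gamma \equiv a \bmod p^{m-1}$; for $p=2$ one gets $\gamma \equiv a \bmod 2^{m-1}$ when $b$ is even and $\gamma \equiv a(1 + 2^{m-2}) \bmod 2^{m-1}$ when $b$ is odd. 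Hence $\ch_{p,\phi} \neq 0$ is equivalent to $a \equiv 1 \bmod p^{m-1}$ (for $p$ odd, or $p=2$ with $b$ even) and to $a \equiv 1 + 2^{m-2} \bmod 2^{m-1}$ (for $p=2$ with $b$ odd, using that $1 + 2^{m-2}$ is self-inverse modulo $2^{m-1}$).

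Next I would split on the value of $\ch_{p,\phi}$. If $\ch_{p,\phi} = 0$, then $\gamma \not\equiv 1 \bmod p^{m-1}$, so the condition of item 4) of \cref{prop:k1phiSMC(1mp)} fails and $\ch_{1,\phi} \neq p^{m}$; with item 3) this gives $R(\phi) = \ch_{1,\phi} \in \{p^{i} \mid 1 \leq i \leq m-1\}$ (respectively $\{2^{i} \mid 2 \leq i \leq m-1\}$). If $\ch_{p,\phi} = p^{m-2}(p-1)$, I would pin down $\ch_{1,\phi} = R(\ab{\phi})$ exactly by counting fixed points of $\ab{\phi}$ on $\ab{G} \cong \ZmodZ{p^{m-1}} \times \ZmodZ{p}$, where $\ab{\phi}(\bar{x}^{u}\bar{y}^{v}) = \bar{x}^{au}\bar{y}^{bu+v}$; the fixed points satisfy $u(a-1) \equiv 0 \bmod p^{m-1}$ and $bu \equiv 0 \bmod p$, and the value of $a$ forced above makes the first congruence vacuous or nearly so, yielding $\ch_{1,\phi} = p^{m}$ (when $b \equiv 0 \bmod p$, $a \equiv 1 \bmod p^{m-1}$) or $\ch_{1,\phi} = p^{m-1}$ otherwise. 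Adding $p^{m-2}(p-1)$ gives the two exceptional values, using $2^{m-1} + 2^{m-2} = 2^{m} - 2^{m-2}$ in the $p=2$ case. For the reverse inclusion I would exhibit explicit automorphisms: the family $\phi(x) = x^{1+p^{i-1}}$, $\phi(y) = y$ realises each $p^{i}$ with $1 \leq i \leq m-1$; the identity realises $p^{m} + p^{m-1} - p^{m-2}$ (resp.\ $2^{m}+2^{m-2}$); and $\phi(x) = xy$, $\phi(y) = y$ for odd $p$ (resp.\ $\phi(x) = x^{1+2^{m-2}}y$, $\phi(y) = y$ for $p=2$) realises $2p^{m-1} - p^{m-2}$ (resp.\ $2^{m}-2^{m-2}$), each checked to be an automorphism via \cref{lem:powersOfxy} and surjectivity.

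I expect the main obstacle to be the $p=2$, $b$ odd case: there the passage from $a$ to $\gamma$ is twisted by the factor $1 + 2^{m-2}$, so the criterion for $\ch_{p,\phi} \neq 0$ becomes $a \equiv 1 + 2^{m-2} \bmod 2^{m-1}$ rather than $a \equiv 1$, and one must verify that the resulting fixed-point count still gives $\ch_{1,\phi} = 2^{m-1}$ and that the coincidence $2^{m-1} + 2^{m-2} = 2^{m} - 2^{m-2}$ correctly merges this with the $b$ even contribution. Confirming that the explicit twisted map $\phi(x) = x^{1+2^{m-2}}y$ is genuinely an automorphism, again through \cref{lem:powersOfxy}, is the most delicate piece of bookkeeping.
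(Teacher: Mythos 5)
Your proposal is correct, and your forward inclusion is essentially the paper's own argument: the paper likewise combines \cref{prop:k1phiSMC(1mp)} with \cref{cor:kpphiSMC(1mp)}, derives from \cref{lem:charactersSMC(1mp)} the condition \(a \equiv 1 \bmod p^{m-1}\) (for \(p\) odd, or \(p = 2\) with \(b\) even) respectively \(\ord_2(a - 1) = m - 2\) (for \(p = 2\) with \(b\) odd) for all characters in \(\Irr_p(G)\) to be fixed, and then counts fixed points of \(\ab{\phi}\) on \(\ab{G}\) exactly as you do. Your reformulation through the single quantity \(\gamma\) with \(\phi(x^p) = x^{\gamma p}\), \(\gamma \equiv a\) or \(a(1 + 2^{m-2})\), is only a cleaner packaging of the same computation.

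The reverse inclusion is where you genuinely diverge, and the divergence matters. For the largest value you use the identity (the paper uses \(\phi_{m-1}\colon x \mapsto x^{p^{m-1}+1},\ y \mapsto y\); both work, yours is simpler), and for \(2p^{m-1} - p^{m-2}\) the paper uses \(\psi\colon x \mapsto x^{p^{m-1}+1}y,\ y \mapsto y\). For odd \(p\) the paper's \(\psi\) and your \(x \mapsto xy\) both satisfy the criterion and both work. But for \(p = 2\) the paper's verification rests on the identity \(\psi(x^{pe}) = x^{p^m e + pe}\), which by \cref{lem:powersOfxy} is valid only for odd \(p\): for \(p = 2\) one actually gets \(\psi(x^{2e}) = x^{(2 + 2^{m-1})e}\), and the paper's \(\psi\) has \(a - 1 = 2^{m-1}\), so \(\ord_2(a-1) = m - 1 \neq m - 2\), violating the criterion the paper itself derived earlier in the same proof. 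Consequently \(\ch_{2, \psi} = 0\) and \(R(\psi) = 2^{m-1}\), so the paper's witness for the value \(2^m - 2^{m-2}\) fails. Your witness \(x \mapsto x^{1 + 2^{m-2}}y,\ y \mapsto y\) has \(a - 1 = 2^{m-2}\), does satisfy the criterion, and your fixed-point count gives \(R = 2^{m-1} + 2^{m-2} = 2^m - 2^{m-2}\) as required; you even flagged this as the delicate point. So your proposal not only proves the theorem but repairs a genuine gap in the published proof for \(p = 2\); the statement of the theorem itself is unaffected.
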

\begin{proof}
	Combining \cref{lem:k1phiSMC(1mp),prop:kpphiSMC(1mp)} yields the inclusions
	\[
		\SpecR(G) \subseteq \{p^{i} \mid 1 \leq i \leq m\} + \{0, p^{m - 2}(p - 1)\}
	\]
	for odd \(p\) and
	\[
		\SpecR(G) \subseteq \{2^{i} \mid 2 \leq i \leq m\} + \{0, 2^{m - 2}\}
	\]
	for \(p = 2\). For the \(\subseteq\)-inclusion, we therefore still need to prove the following:
	\begin{itemize}
		 \item If \(\ch_{p, \phi} = p^{m - 2}(p - 1)\) for \(\phi \in \Aut(G)\), then \(\ch_{1, \phi} \in \{p^{m - 1}, p^{m}\}\).
		 \item If \(\ch_{1, \phi} = p^{m}\) for \(\phi \in \Aut(G)\), then \(\ch_{p, \phi} = p^{m - 2}(p - 1)\).
	 \end{itemize}
	Fix \(\phi \in \Aut(G)\). First, suppose that \(\ch_{p, \phi} = p^{m - 2}(p - 1)\). Write \(\phi(x) = x^{a}y^{b}\) with \(a \in \{0, \ldots, p^{m} - 1\}\) and \(b \in \{0, \ldots, p - 1\}\). It is easily checked that \(\phi(x^{p}) = x^{p a}\) for odd \(p\) and \(\phi(x^{2}) = x^{(1 + \alpha(y)^{b})a}\) for \(p = 2\). As all characters in \(\Irr_{p}(G)\) are fixed, it follows from \cref{lem:charactersSMC(1mp)} that for \(p\) odd,
	\[
		p \zeta^{lpe} = p\zeta^{la pe}
	\]
	for all \(e \in \{0, \ldots, p^{m - 1} - 1\}\) and \(l\) coprime with \(p\), where again \(\zeta = \exp\left(\frac{2\pi i}{p^{m}}\right)\). This can only hold if \(\zeta^{p(a - 1)} = 1\), \ie if \(a \equiv 1 \bmod p^{m - 1}\). For \(p = 2\), we get
	\[
		2 \zeta^{2le} = 2 \zeta^{(1 + \alpha(y)^{b})ale}
	\]
	for all \(e \in \{0, \ldots, p^{m - 1} - 1\}\) and \(l\) coprime with \(p\). This can only hold if \(a + a \alpha(y)^{b} - 2 \equiv 0 \bmod 2^{m}\). Viewing this modulo \(2^{m - 1}\) yields \(2a - 2 \equiv 0 \bmod 2^{m - 1}\), showing that \(a - 1 \equiv 0 \bmod 2^{m - 2}\). Thus, for all \(p\), we have that \(a - 1 \equiv 0 \bmod p^{m - 2}\).
	
	Now, writing \(\phi(y) = x^{c} y\) with \(c \equiv 0 \bmod p^{m - 1}\) (due to \cref{lem:k1phiSMC(1mp)}), we now determine \(\ch_{1, \phi}\) by counting the number of fixed points of \(\phi^{ab}\) on \(\ab{G}\). In order to do so, we have to count the number of solutions \((A, B)\) in \(\ZmodZ{p^{m - 1}} \times \ZmodZ{p}\) to the system of congruences
	\[
		\begin{cases}
			a A \equiv A \bmod p^{m - 1}	\\
			b A + B \equiv B \bmod p.
		\end{cases}
	\]
	If \(A \equiv 0 \bmod p\), then \(A(a - 1) \equiv 0 \bmod p^{m - 1}\). Therefore, this systems has at least \(p^{m - 2} \cdot p = p^{m - 1}\) solutions, and at most \(\size{G^{ab}} = p^{m}\). We conclude that \(\ch_{1, \phi} \in \{p^{m - 1}, p^{m}\}\).

	Next, suppose that \(\ch_{1, \phi} = p^{m}\). By \cref{lem:k1phiSMC(1mp)} we then know that \(\phi(x) = x^{Ap^{m - 1} + 1}\) for some \(A \in \Z\). Fix \(\overline{\chi}_{l} \in \Irr_{p}(G)\) and write \(\alpha(y) \equiv \beta p^{m - 1} + 1 \bmod p^{m}\) with \(\beta\) coprime with \(p\), since \(\alpha(y) \not \equiv 1 \bmod p^{m}\). Let \(\zeta\) be as before. Note that
	\[
		\zeta^{l p (Ap^{m - 1} + 1 - 1)} = \zeta^{l A p^{m}} = 1.
	\]
	Thus, by the proof of \cref{prop:kpphiSMC(1mp)}, this implies that \(\overline{\chi}_{l} \circ \phi = \overline{\chi}_{l}\). As \(\overline{\chi}_{l}\) was arbitrary, we conclude that \(\ch_{p, \phi} = p^{m - 2}(p - 1)\).
	
	Thus, we have proven that
	\[
		\SpecR(G) \subseteq
		\begin{cases}
			 \{p^{i} \mid 1 \leq i \leq m - 1\} \cup \{2p^{m - 1} - p^{m - 2}, p^{m} + p^{m - 1} - p^{m - 2}\}	&	\mbox{for \(p\) odd,}	\\
			 \{2^{i} \mid 2 \leq i \leq m - 1\} \cup \{2^{m} - 2^{m - 2}, 2^{m} + 2^{m - 2}\}	&	\mbox{for \(p = 2\).}	
		\end{cases}
	\]
	
	We are left with providing automorphisms of \(G\) realising the candidate-Reidemeister numbers on the right. For \(i \in \{0, \ldots, m - 1\}\), define
	\[
		\phi_{i}: G \to G: x \mapsto x^{p^{i} + 1}, y \mapsto y.
	\]
	It is readily verified that \(\phi_{i}\) preserves the relations of \(G\) and that \(\phi_{i}\) is surjective (and therefore injective) for all \(i \in \{0, \ldots, m - 1\}\) if \(p\) is odd, and for all \(i \in \{1, \ldots, m - 1\}\) if \(p = 2\). The map \(\ab{\phi}_{i}\) is given by \(\ab{\phi}_{i}(\bar{x}^{A}\bar{y}^{B}) = \bar{x}^{A(p^{i} + 1)}\bar{y}^{B}\), which has \(p^{i + 1}\) fixed points, showing that \(\ch_{1, \phi_{i}} = p^{i + 1}\). If \(i \leq m - 2\), then
	\[
		\overline{\chi}_{1}(\phi(x^{p})) = \overline{\chi}_{1}(x^{p^{i + 1} + p}) = p \zeta^{p^{i + 1} + p} \ne p \zeta^{p} = \overline{\chi}_{1}(x^{p}),
	\]
	showing that \(\overline{\chi}_{1}\) is not fixed by \(\phi_{i}\). Therefore, \(\ch_{p, \phi_{i}} = 0\) if \(i \leq m - 2\). If \(i = m - 1\), then the equality \(p \zeta^{p^{i + 1} + 1} = p \zeta^{p}\) holds and thus \(\ch_{p, \phi_{i}} = p^{m - 2}(p - 1)\). Consequently, we see that
	\[
		R(\phi_{i}) = \begin{cases}
			p^{i + 1}	&	\mbox{if } i \leq m - 2	\\
			p^{m} + p^{m - 2}(p - 1)	&	\mbox{if } i = m - 1.
		\end{cases}
	\]
	Finally, define \(\psi: G \to G: x \mapsto x^{p^{m - 1} + 1}y, y \mapsto y\). Again, \(\psi\) is a well-defined automorphism of \(G\). For \(\overline{\chi}_{l} \in \Irr_{p}(G)\) and \(e \in \{0, \ldots, p^{m - 1} - 1\}\), we see that
	\[
		\overline{\chi}_{l}(\psi(x^{pe})) = \overline{\chi}_{l}(x^{p^{m}e + pe}) = \overline{\chi}_{l}(x^{pe}).
	\]
	Thus, \(\overline{\chi}_{l}\) and \(\overline{\chi}_{l} \circ \psi\) match on \(Z(G)\). As \(\overline{\chi}_{l}\) is zero outside \(Z(G)\), it follows that \(\overline{\chi}_{l}\) is fixed by \(\psi\). Therefore, \(\ch_{p, \psi} = p^{m - 2}(p - 1)\), as \(\overline{\chi}_{l}\) was arbitrary. Now, on \(\ab{G}\) we see that \(\psi(\bar{x}^{A}\bar{y}^{B}) = \bar{x}^{A} \bar{y}^{B + A}\). Thus, \(\bar{x}^{A}\bar{y}^{B}\) is a fixed point if and only if \(A \equiv 0 \bmod p\). This implies that \(\ab{\psi}\) has \(p^{m - 2} \cdot p = p^{m - 1}\) fixed points, showing that
	\[
		R(\psi) = R(\ab{\psi}) + \ch_{p, \psi} = p^{m - 1} + p^{m - 2}(p - 1),
	\]
	ending the proof.
\end{proof}

\section*{Acknowledgments}
The author thanks Karel Dekimpe for his useful remarks and suggestions.

\printbibliography[heading=bibintoc]
\end{document}